\documentclass[a4paper]{article}

\usepackage{
amsmath,
amsthm,
amscd,
amssymb,
}
\usepackage{authblk}
\usepackage{comment}
\usepackage{tikz}
\usetikzlibrary{positioning,arrows.meta,calc}
\usepackage{xspace}
\usepackage{stmaryrd}
\usepackage{wasysym}

\usepackage{pgfplotstable}
\usetikzlibrary{calc}

\setcounter{tocdepth}{3}
\usepackage{graphicx}
\usepackage{caption}
\usepackage{subcaption}

\usepackage{mathrsfs}

\usepackage{url}

\usepackage{filecontents}

\usepackage[pdfborder={0 0 0}]{hyperref}

\theoremstyle{plain}
\newtheorem{theorem}{Theorem}[section]
\newtheorem{lemma}[theorem]{Lemma}
\newtheorem{definition}[theorem]{Definition}
\newtheorem{corollary}[theorem]{Corollary}
\newtheorem{proposition}[theorem]{Proposition}
\newtheorem{conjecture}[theorem]{Conjecture}

\newtheorem{remark}[theorem]{Remark}

\newtheoremstyle{derp}
{3pt}
{3pt}
{}
{}
{\upshape}
{:}
{.5em}
{}
\theoremstyle{derp}
\newtheorem{example}[theorem]{Example}

\newcommand{\R}{\mathbb{R}}
\newcommand{\Q}{\mathbb{Q}}
\newcommand{\Z}{\mathbb{Z}}
\newcommand{\C}{\mathbb{C}}
\newcommand{\N}{\mathbb{N}}

\newcommand{\orb}{\mathcal{O}}

\newcommand{\PC}{\mathrm{PC}}
\newcommand{\MPC}{\mathrm{MPC}}

\newcommand{\OC}[1]{\overline{\mathcal{O}(#1)}}

\newcommand\xqed[1]{%
  \leavevmode\unskip\penalty9999 \hbox{}\nobreak\hfill
  \quad\hbox{#1}}
\newcommand\qee{\xqed{$\fullmoon$}}

\newcommand{\supp}{\mathrm{supp\;}}

\newcommand{\Path}{\mathtt{Path}}
\newcommand{\DPath}{\mathtt{DPath}}

\newcommand{\bol}[1]{\mathbf{#1}}

\newcommand{\blab}{cluster}

\newcommand{\blabs}{clusters}

\newcommand{\froctal}{cluster fractal}
\newcommand{\Froctals}{Cluster fractals}
\newcommand{\froctals}{cluster fractals}

\newcommand{\dist}{d}

\begin{filecontents}{finite.cvs}
1
\end{filecontents}

\begin{filecontents}{string.cvs}
0 0 0 0 0 0 0 0 0 0 0 0 0 0 0 0 0 0 0 0 0 0 0 0 0 0 0 0 0 0 0 1 1 1
0 0 0 0 0 0 0 0 0 0 0 0 0 0 0 0 0 0 0 0 0 0 0 0 0 0 0 0 1 1 1 0 0 0
0 0 0 0 0 0 0 0 0 0 0 0 0 0 0 0 0 0 0 0 0 0 0 0 0 0 1 1 0 0 0 0 0 0
0 0 0 0 0 0 0 0 0 0 0 0 0 0 0 0 0 0 0 0 0 0 0 1 1 1 0 0 0 0 0 0 0 0
0 0 0 0 0 0 0 0 0 0 0 0 0 0 0 0 0 0 0 0 1 1 1 0 0 0 0 0 0 0 0 0 0 0
0 0 0 0 0 0 0 0 0 0 0 0 0 0 0 0 0 0 1 1 0 0 0 0 0 0 0 0 0 0 0 0 0 0
0 0 0 0 0 0 0 0 0 0 0 0 0 0 0 1 1 1 0 0 0 0 0 0 0 0 0 0 0 0 0 0 0 0
0 0 0 0 0 0 0 0 0 0 0 0 0 1 1 0 0 0 0 0 0 0 0 0 0 0 0 0 0 0 0 0 0 0
0 0 0 0 0 0 0 0 0 0 1 1 1 0 0 0 0 0 0 0 0 0 0 0 0 0 0 0 0 0 0 0 0 0
0 0 0 0 0 0 0 1 1 1 0 0 0 0 0 0 0 0 0 0 0 0 0 0 0 0 0 0 0 0 0 0 0 0
0 0 0 0 0 1 1 0 0 0 0 0 0 0 0 0 0 0 0 0 0 0 0 0 0 0 0 0 0 0 0 0 0 0
0 0 1 1 1 0 0 0 0 0 0 0 0 0 0 0 0 0 0 0 0 0 0 0 0 0 0 0 0 0 0 0 0 0
1 1 0 0 0 0 0 0 0 0 0 0 0 0 0 0 0 0 0 0 0 0 0 0 0 0 0 0 0 0 0 0 0 0
\end{filecontents}

\begin{filecontents}{blobfractal.cvs}
1 0 1 0 0 0 1 0 1 0 0 0 0 0 0 0 0 0 1 0 1 0 0 0 1 0 1
0 0 0 0 0 0 0 0 0 0 0 0 0 0 0 0 0 0 0 0 0 0 0 0 0 0 0
1 0 1 0 0 0 1 0 1 0 0 0 0 0 0 0 0 0 1 0 1 0 0 0 1 0 1
0 0 0 0 0 0 0 0 0 0 0 0 0 0 0 0 0 0 0 0 0 0 0 0 0 0 0
0 0 0 0 0 0 0 0 0 0 0 0 0 0 0 0 0 0 0 0 0 0 0 0 0 0 0
0 0 0 0 0 0 0 0 0 0 0 0 0 0 0 0 0 0 0 0 0 0 0 0 0 0 0
1 0 1 0 0 0 1 0 1 0 0 0 0 0 0 0 0 0 1 0 1 0 0 0 1 0 1
0 0 0 0 0 0 0 0 0 0 0 0 0 0 0 0 0 0 0 0 0 0 0 0 0 0 0
1 0 1 0 0 0 1 0 1 0 0 0 0 0 0 0 0 0 1 0 1 0 0 0 1 0 1
0 0 0 0 0 0 0 0 0 0 0 0 0 0 0 0 0 0 0 0 0 0 0 0 0 0 0
0 0 0 0 0 0 0 0 0 0 0 0 0 0 0 0 0 0 0 0 0 0 0 0 0 0 0
0 0 0 0 0 0 0 0 0 0 0 0 0 0 0 0 0 0 0 0 0 0 0 0 0 0 0
0 0 0 0 0 0 0 0 0 0 0 0 0 0 0 0 0 0 0 0 0 0 0 0 0 0 0
0 0 0 0 0 0 0 0 0 0 0 0 0 0 0 0 0 0 0 0 0 0 0 0 0 0 0
0 0 0 0 0 0 0 0 0 0 0 0 0 0 0 0 0 0 0 0 0 0 0 0 0 0 0
0 0 0 0 0 0 0 0 0 0 0 0 0 0 0 0 0 0 0 0 0 0 0 0 0 0 0
0 0 0 0 0 0 0 0 0 0 0 0 0 0 0 0 0 0 0 0 0 0 0 0 0 0 0
0 0 0 0 0 0 0 0 0 0 0 0 0 0 0 0 0 0 0 0 0 0 0 0 0 0 0
1 0 1 0 0 0 1 0 1 0 0 0 0 0 0 0 0 0 1 0 1 0 0 0 1 0 1
0 0 0 0 0 0 0 0 0 0 0 0 0 0 0 0 0 0 0 0 0 0 0 0 0 0 0
1 0 1 0 0 0 1 0 1 0 0 0 0 0 0 0 0 0 1 0 1 0 0 0 1 0 1
0 0 0 0 0 0 0 0 0 0 0 0 0 0 0 0 0 0 0 0 0 0 0 0 0 0 0
0 0 0 0 0 0 0 0 0 0 0 0 0 0 0 0 0 0 0 0 0 0 0 0 0 0 0
0 0 0 0 0 0 0 0 0 0 0 0 0 0 0 0 0 0 0 0 0 0 0 0 0 0 0
1 0 1 0 0 0 1 0 1 0 0 0 0 0 0 0 0 0 1 0 1 0 0 0 1 0 1
0 0 0 0 0 0 0 0 0 0 0 0 0 0 0 0 0 0 0 0 0 0 0 0 0 0 0
1 0 1 0 0 0 1 0 1 0 0 0 0 0 0 0 0 0 1 0 1 0 0 0 1 0 1
\end{filecontents}

\makeatletter
\tikzset{
    zero color/.initial=white,
    zero color/.get=\zerocol,
    zero color/.store in=\zerocol,
    one color/.initial=red,
    one color/.get=\onecol,
    one color/.store in=\onecol,
    cell wd/.initial=1ex,
    cell wd/.get=\cellwd,
    cell wd/.store in=\cellwd,
    cell ht/.initial=1ex,
    cell ht/.get=\cellht,
    cell ht/.store in=\cellht,
}

\newcommand{\drawgrid}[2][]{
\medskip
\begin{tikzpicture}[#1]
  \pgfplotstableforeachcolumn#2\as\col{
    \pgfplotstableforeachcolumnelement{\col}\of#2\as\colcnt{%
      \ifnum\colcnt=0
        \fill[\zerocol]($ -\pgfplotstablerow*(0,\cellht) + \col*(\cellwd,0) $) rectangle+(\cellwd,\cellht);
      \fi
      \ifnum\colcnt=1
        \fill[\onecol]($ -\pgfplotstablerow*(0,\cellht) + \col*(\cellwd,0) $) rectangle+(\cellwd,\cellht);
      \fi
    }
  }
\end{tikzpicture}
\medskip
}
\makeatother

\title{Subshifts with sparse traces}

\author{
Ville Salo
}
\affil{University of Turku \\
\url{vosalo@utu.fi}}

\begin{document}
\maketitle

\begin{abstract}
We study two-dimensional subshifts whose horizontal trace (a.k.a. projective subdynamics) contains only points of finite support. Our main result is a classification result for such subshifts satisfying a minimality property. As corollaries, we obtain new proofs for various known results on traces of SFTs, nilpotency and decidability of cellular automata, topological full groups and the subshift of prime numbers. We also construct various (sofic) examples illustrating the concepts.
\end{abstract}

\section{Introduction}

Multidimensional subshifts are $\Z^d$-actions by translations on closed subspaces of $\Sigma^{\Z^d}$. The best-known examples of such are subshifts of finite type (SFTs), which are isomorphic to sets of tilings by square tiles with adjacency contraints. Much of the theory of multidimensional subshifts revolves around the phenomenon that the set of tilings, while defined by finitely many local constraints, can have complex dynamical and computational properties \cite{Be66,Ro71,Ka08,BoPaSc10,HoMe10,DuRoSh12}. While two-dimensional SFTs can be highly complex, they have severe limitations that follow trivially from their definition by local rules -- for example, an SFT containing a nontrivial finite configuration (that is, a configuration of finite support) has positive entropy.

Another popular class are the sofic shifts, images of SFTs through factor maps. These subshifts are already much more general. For example, substitutive subshifts (satisfying some technical conditions) are sofic in two dimensions \cite{Mo89}, and the (horizontal) traces, i.e.\ sets of rows appearing in configurations, of sofic shifts exactly coincide with the class of computable subshifts \cite{Ho09,DuRoSh10,AuSa13}. The latter in particular shows that the theory of multidimensional SFTs, and especially sofics, leads quite naturally to general computable subshifts. While not all computable subshifts are sofic, understanding which of them are is an active research area \cite{GuJe15,OrPa16}, and many simple-looking questions are open.


Cellular automata are another point of view to SFTs, in the sense that in many situations they correspond to SFTs with a rational deterministic direction. Similarly as with SFTs, there is much freedom in constructing cellular automata; many properties turn out to be undecidable, and many kinds of behavior are possible for cellular automata \cite{Ka05,Ka12}.

However, also limitations are known. In this paper, we concentrate on limitations arising from having very simple horizontal traces,\footnote{Of course there are many other known restrictions that force subshifts and cellular automata to behave, for example countability \cite{BaDuJe08,BaJe13}, expansivity of CA \cite{BoKi99} and algebraicity \cite{Sc95}.} and from various types of nilpotency. A well-known result of this type is that the horizontal trace of an infinite SFT cannot be sparse, that is, it cannot consist of only configurations with finitely many nonzero symbols. This is a corollary of the full characterization of possible sofic traces of SFTs given in \cite{PaSc15}.

Other similar results in the nilpotency framework appear in \cite{GuRi10} and \cite{SaTo12c}, where the assumption of SFTness is in some sense replaced with determinism, and the multidimensional subshift is not discussed explicitly. In \cite{GuRi10} it is in particular shown that on certain one-dimensional SFTs, asymptotically nilpotent cellular automata 
are nilpotent, 
and in \cite{SaTo12c} a result of similar flavor is proved for CA on countable sofic shifts, in particular showing that nilpotency is decidable on these subshifts.

This paper is an attempt to clarify and unify such results; our main result gives new proofs for the results of \cite{PaSc15,GuRi10,SaTo12c} mentioned in the previous paragraphs.\footnote{All three papers also contain many other results, which we do not reproduce.} However, our result is much more general, in that it applies to all subshifts with sparse traces, not only SFTs or sofic subshifts. We also contribute two techniques that we find quite universally helpful in structuring such proofs, namely restricting to an almost minimal subsystem and studying paths drawn on configurations.

The corollaries listed above are discussed in more detail in Section~\ref{sec:NewProofs}. In addition to them, we obtain for example results that apply to subshifts where the trace is sparse in an irrational direction and some observations about expansive directions in sparse subshifts. We also extract results for topological full groups, and show that the subshift of prime numbers contains either a finite point or a \froctal{} for purely topological reasons.


We make tiny contributions to the construction side of multidimensional subshifts, and show that general subshifts (and already sofic shifts) with sparse traces can be quite nontrivial. We summarize them in Section~\ref{sec:Constructions}.

\subsection{The main theorem}






In this section, we state the main structure theorems, and need some terminology for points of various forms. A \emph{finite point} is a point whose support is finite (that is, all but finitely many cells contain the symbol $0$). A \emph{highway} (Definition~\ref{def:Highway}) is 
a configuration whose support consists of a single ascending path of bounded width whose movement is guided by a uniformly recurrent sequence. \emph{\Froctals{}} (Definition~\ref{def:Blob}) are recursively defined configurations where finite patterns padded with zeroes (\blabs{}) are collected into larger finite clusters with larger and larger separation.
See Figure~\ref{fig:MainCases} for illustrations.

\begin{figure}
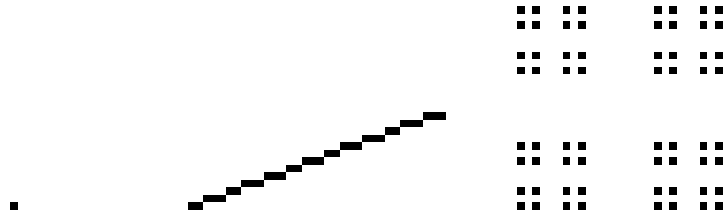

    \centering
    \begin{subfigure}[b]{0.3\textwidth}
    \begin{center}
        \pgfplotstableread{finite.cvs}{\matrixfile}

	   \drawgrid[zero color=white, one color=black, one color=black, cell wd=0.1, cell ht=0.1]{\matrixfile}

        \end{center}
        \caption{A finite configuration: the configuration with a single nonzero symbol.}
    \end{subfigure}
    ~ 
    \begin{subfigure}[b]{0.3\textwidth}
    \begin{center}
        \pgfplotstableread{string.cvs}{\matrixfile}

	   \drawgrid[zero color=white, one color=black, one color=black, cell wd=0.1, cell ht=0.1]{\matrixfile}
	   \end{center}
        \caption{A highway configuration: the discretization of an irrational line.}
    \end{subfigure}
    ~ 
    \begin{subfigure}[b]{0.3\textwidth}
    \begin{center}
        \pgfplotstableread{blobfractal.cvs}{\matrixfile}

	   \drawgrid[zero color=white, one color=black, cell wd=0.1, cell ht=0.1]{\matrixfile}
	   \end{center}
        \caption{A \froctal{}: a subshift based on Cantor's dust.}
        \label{fig:CantorsDust}
    \end{subfigure}
    \caption{Folklore examples of the three cases that can appear in Theorem~\ref{thm:Main}. The trace of the \froctal{} given here is not actually sparse, as we are not aware of such examples in the literature -- see Section~\ref{sec:Examples} for a \froctal{} with a sparse trace.}\label{fig:animals}
    \label{fig:MainCases}
\end{figure}


When studying a minimal subshift, it is useful to express it in a substitutive way, by listing its words of some length, organizing them into longer words that occur in the subshift, each long enough to contain all previous ones, and continuing inductively. This is often formalized by Bratteli-Vershik diagrams \cite{HePuSk92}. 


An \emph{almost minimal} subshift (Definition~\ref{def:AlmoMin}) is one containing a fixed-point for the dynamics, with the property that every point except the fixed one generates the whole subshift. We can give a substitutive structure to such subshift in the following sense (proved as Theorem~\ref{thm:OneDimensionalMainProof}), which is a one-dimensional version of our main theorem:

\begin{theorem}
\label{thm:OneDimensionalMain}
Let $X$ be any almost minimal one-dimensional subshift. Then $X$ is the orbit-closure of a finite point or a \froctal{}.
\end{theorem}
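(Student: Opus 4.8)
The plan is to split off the finite-support case and then extract a substitutive, ``blob fractal'' presentation of $X$ by a hierarchical parsing argument. If $X=\{0^{\Z}\}$ then $X$ is the orbit closure of the finite point $0^{\Z}$; and if $X$ contains some $y\neq 0^{\Z}$ of finite support, then almost minimality gives $X=\OC{y}$, again an orbit closure of a finite point. So assume from now on that $0^{\Z}$ is the only finite point of $X$; then $X$ is infinite, and fixing any $x\in X$ with $x\neq 0^{\Z}$ we have $X=\OC{x}$. Since $0^{\Z}\in X=\OC{x}$, the point $x$ --- and hence every point of $X$ --- contains arbitrarily long runs of $0$, so $0^{n}$ is a legal word of $X$ for every $n$.

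The heart of the argument is to put on $x$ a self-similar block hierarchy, in the spirit of the ``words of length $k$, glue into longer words'' description of minimal subshifts (Bratteli--Vershik diagrams / substitutions). Concretely, I would produce an increasing sequence of scales $\ell_{1}\mid\ell_{2}\mid\cdots$ with $\ell_{n}\to\infty$ and, for each $n$, a translate of $x$ cut into consecutive aligned blocks of length $\ell_{n}$ such that: (i) every level-$n$ block is either $0^{\ell_{n}}$ or belongs to a finite set of ``non-zero level-$n$ blocks''; (ii) every level-$(n{+}1)$ block is a concatenation of level-$n$ blocks; and (iii) the combinatorial rule by which a non-zero level-$(n{+}1)$ block is assembled from level-$n$ blocks eventually no longer depends on $n$. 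The existence of a first workable scale and alignment would come from analysing the return structure of a fixed non-zero word along the orbit of $x$: almost minimality forces the occurrences of such a word to cluster coherently across scales, while the presence of the fixed point only ever introduces the inert zero block $0^{\ell_{n}}$. Passing to a subsequence of scales on which the (finitely many, once the block types are bounded) assembly rules are constant then yields a constant-length substitution $\tau$ over a finite alphabet with $\tau(0)=0^{k}$, $k=|\tau|$, whose subshift contains $x$.

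It remains to read off the non-degeneracy of $\tau$ and conclude. No non-zero letter can map under $\tau$ to $0^{k}$, since a non-zero block is genuinely non-zero at every higher scale; some non-zero letter must map to a word with at least two non-zero letters, for otherwise $X$ would consist only of $0^{\Z}$ and the shifts of one finite point, which we have excluded; and some non-zero letter must map to a word containing $0$, since otherwise no point of the subshift of $\tau$ would contain any $0$ and $0^{\Z}$ could not lie in it. Thus the two-sided fixed point $c$ of $\tau$ is a blob fractal, and since $\tau$ was built from the block structure of $x$, the subshift of $\tau$ has the same finite words as $X=\OC{x}$, whence $X=\OC{c}$. The main obstacle is step (iii): turning the scale-by-scale parsings into one globally consistent substitution --- above all, bounding the number of non-zero block types and the scale ratios uniformly along a subsequence, and verifying that the parsing is unambiguous (recognizable). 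This is exactly where full almost minimality is needed, as opposed to the mere presence of $0^{\Z}$ (for a generic subshift containing $0^{\Z}$ no self-similar hierarchy exists); in the two-dimensional version of the theorem it is also the step that motivates first restricting to an almost minimal subsystem.
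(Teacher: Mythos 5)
There is a genuine gap, and it sits exactly where you locate your ``main obstacle'': step (iii) is not a technical difficulty to be overcome but a claim that is false in general. You are trying to prove something strictly stronger than the theorem asks, namely that $X$ is generated by the fixed point of a single constant-length substitution $\tau$ (equivalently, that $X$ carries an aligned block hierarchy with uniformly bounded scale ratios and an eventually constant assembly rule). Almost minimal subshifts need not have any such structure: take a blob fractal built from one nonzero cell by repeatedly placing two copies of the previous blob at separation $n_i$, where $(n_i)$ grows super-exponentially or is non-computable. By the paper's own lemma its orbit closure is almost minimal, but varying $(n_i)$ produces uncountably many distinct languages (indeed non-computable ones), while there are only countably many substitutions, all with computable language; so no passage to a subsequence of scales can yield a constant-length $\tau$ whose subshift coincides with $X$. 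The point of the definition of blob fractal in the paper is precisely that the scales $r_i$ and the blob alphabets $B_i$ are allowed to change arbitrarily from level to level, with no self-similarity, no alignment to an arithmetic grid, and no recognizability requirement; aiming for a genuine substitution makes the plan unachievable rather than merely harder. (A secondary issue: even granting $\tau$, your non-degeneracy checks do not verify the conditions actually required of a blob fractal, e.g.\ that every level-$(i+1)$ blob contains a translate of \emph{every} level-$i$ blob, which needs a primitivity-type argument.)

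The repair is to aim directly at the weaker, correct target, which is what the paper does. Fix any nonzero $x$; since $\OC{x}=X\ni 0^{\Z}$, the word $0^{n}$ occurs in $x$ for every $n$, and from almost minimality one deduces that for each $r$ the $r$-connected components of the support of $x$ have bounded size, so the set $B$ of $r$-blobs of $x$ is finite for every $r$. If the process of enlarging $r$ never produces anything beyond paddings of a single blob, $x$ is a finite point. Otherwise one chooses $r_1<r_2<\cdots$ greedily: because $x$ is not finite, for $r_{i+1}$ large enough every $r_{i+1}$-blob has strictly larger support than any $r_i$-blob (hence contains at least two disjoint $r_i$-blobs), and because, by almost minimality, every pattern of $x$ occurs within a bounded distance of every nonzero symbol, for $r_{i+1}$ larger still every $r_{i+1}$-blob contains a translate of every $r_i$-blob. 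The zero-gluing condition is automatic since $r_{i+1}>r_i$. This verifies the definition of blob fractal for $x$ itself, with no parsing, no bounded ratios and no constant rule, and $X=\OC{x}$ finishes the proof. Your treatment of the finite-point case is fine and agrees with the paper.
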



Our main result is Theorem~\ref{thm:Main} below (proved as Theorem~\ref{thm:WhatminimalCases}). It classifies two-dimensional subshifts that are almost minimal, and whose trace is \emph{sparse} (Definition~\ref{def:Sparse}), that is, contains only configurations with finitely many nonzero symbols.

\begin{theorem}
\label{thm:Main}
Let $X$ be any almost minimal two-dimensional subshift with a sparse trace. Then $X$ is the orbit-closure of a finite point, a highway, or a \froctal{}.
\end{theorem}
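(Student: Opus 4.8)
The plan is to follow the same template that proves Theorem~\ref{thm:OneDimensionalMain}, now in two dimensions and using the sparseness hypothesis to control the non-fixed-point direction. First I would fix the fixed point $0 \in X$ (the all-zero configuration, after a shift of alphabet) and pick any other point $x \in X$; by almost minimality the orbit closure $\overline{\mathcal{O}(x)} = X$, so it suffices to understand the structure of a single such $x$ and show it is forced into one of the three shapes. The sparseness hypothesis says every row of every configuration has finite support, so in particular the ``slices'' of $x$ along one coordinate axis are finite-support configurations, and the whole nonzero part of $x$ is, roughly, confined near a union of finitely many lines through the origin of bounded slope, or else is genuinely two-dimensionally spread out. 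I would formalize this dichotomy first: either the nonzero cells of $x$ lie within bounded distance of a single line (the \emph{highway} case, which includes the \whatminimal\ finite-point subcase when that line carries only finitely much mass), or they do not.

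The core of the argument is a self-similarity/renormalization step, exactly as in the one-dimensional blob fractal construction. Using minimality of $X \setminus \{0\}$, every finite pattern $P$ that appears in $x$ reappears, and one extracts from the return structure a substitution-like rule: a large ``blob'' in $x$ is built by placing scaled copies of smaller blobs at positions that themselves form the same combinatorial pattern, separated by growing gaps of zeros (this is forced because $0 \in X$, so arbitrarily large zero regions must be available as ``glue''). The sparseness of the projective subdynamics is what guarantees the recursion cannot spread mass too thickly: at each scale the support stays sparse, which is what makes the limiting object a blob fractal rather than something of positive density. In the highway case, the bounded-slope line together with the finite transverse profile gives a one-dimensional sub-picture to which Theorem~\ref{thm:OneDimensionalMain} (or its proof) applies after projecting, yielding a finite point or a one-dimensional blob fractal riding along the line — and a blob fractal along a line is exactly what ``highway'' names.

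Concretely the steps are: (1) normalize the fixed point to $0$ and reduce to analyzing an arbitrary $x \in X \setminus \{0\}$ via almost minimality; (2) prove the highway/non-highway dichotomy using sparseness of the projective subdynamics applied along each axis, bounding the ``isoslope'' or slope of the support; (3) in the highway case, collapse the transverse direction to a finite alphabet and invoke the one-dimensional classification Theorem~\ref{thm:OneDimensionalMain}, recognizing the resulting object as a finite point or a highway; (4) in the non-highway case, use minimality of $X\setminus\{0\}$ to extract return words / return blobs, show the gaps between consecutive blobs grow without bound (forced by $0\in\overline{\mathcal{O}(x)}$), and assemble these into a substitutive presentation whose limit is by definition a blob fractal; (5) check that $X$, being the orbit closure of this $x$, coincides with the orbit closure of the finite point / highway / blob fractal thus produced.

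The main obstacle I expect is step~(2) together with the bookkeeping in step~(4): proving that sparseness in the projective subdynamics actually forces the two-dimensional support into either a bounded-slope tube or a genuinely hierarchical blob pattern, with \emph{no} intermediate possibility (e.g.\ a configuration that is sparse along every row but whose supports drift in an unbounded, non-self-similar way). Ruling that out requires using minimality hard — the point is that $X\setminus\{0\}$ being minimal means the ``drift pattern'' is itself almost periodic, hence forced to be self-similar by a pigeonhole/return-word argument — and getting the quantitative estimates (gap growth rates, slope bounds, nesting of blobs) to line up cleanly with the formal definitions of ``highway'' and ``blob fractal'' is where the real work lies. The one-dimensional case in Theorem~\ref{thm:OneDimensionalMain} is the model, but the two-dimensional highway can itself sit at an irrational angle, so one cannot simply quote the 1D result coordinate-wise; a separate argument handling the irrational-slope tube is needed before the 1D classification can be applied along it.
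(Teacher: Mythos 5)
Your steps (1), (4) and (5) do track the paper's strategy: one starts building the blob-fractal hierarchy exactly as in the one-dimensional case, and the finite-point and blob-fractal outcomes come out of that process essentially as you describe. But the decisive content of the theorem is what happens when the hierarchy \emph{fails} to form, i.e.\ when for some $r$ the configurations of $X$ contain arbitrarily large (or infinite) $r$-connected components, and here your proposal has a genuine gap. Your step (2) dichotomy --- ``either the nonzero cells lie within bounded distance of a single line, or they are genuinely two-dimensionally spread out'' --- is not the right dichotomy and is not proved: you explicitly defer it as ``the main obstacle'' without supplying a mechanism. Worse, the line-based formulation is false as a description of the highway case: the paper constructs (after Proposition~\ref{prop:PathsSofic}) sofic examples whose configurations are uniformly ascending paths whose supports fit no strip $L+B_r$ around any straight line, so a highway cannot be characterized as a tube around a line, and your step (3) reduction ``collapse the transverse direction and invoke the one-dimensional theorem'' has nothing to collapse onto. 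Likewise ``a blob fractal riding along a line'' is \emph{not} what highway names: a highway's support is the range of a single uniformly recurrent ascending path with bounded jumps (so no unbounded gaps along it), whereas a one-dimensional blob fractal drawn along a vertical line is classified as a blob fractal, not a highway.

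What the paper actually does at this point, and what is missing from your proposal, is the path-cover machinery. From arbitrarily large components one gets a nonempty marked path cover (Lemma~\ref{lem:MPC}); a minimal subsystem of it yields a uniformly recurrent path, and the classification of minimal path spaces (Theorem~\ref{thm:PathCharacterization}) together with sparseness rules out the bounded and infinite-to-one cases, so the path is ascending --- this is where ``bounded slope'' really comes from, not from an axis-wise sparseness argument. Then the hard step is showing the configuration's support stays within bounded distance of that path: the paper takes the maximal $m$ such that $m$ preroad configurations can be found with their patterns arbitrarily separated on common rows of a single configuration (the sets $Y_m$; $m$ is finite precisely because $k$-sparseness bounds the number of parallel ascending paths crossing rows $0,\dots,r-1$), and shows that a preroad of unbounded width would let one extract an $(m+1)$-st far-away path, a contradiction; Lemmas~\ref{lem:StringLeeway} and~\ref{lem:RoadIsHighway} then upgrade the result to a highway. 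Your proposal contains no substitute for this maximality/extraction argument, nor for the step ruling out non-ascending recurrent paths, and without them the ``no intermediate possibility'' claim you flag is exactly the unproved heart of the theorem.
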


As a corollary, we find these kinds of configurations in all subshifts where there is suitable directional convergence to a fixed point. The following is our main extraction result of this type, proved as Theorem~\ref{thm:MainFindingProof}:

\begin{theorem}
\label{thm:MainFinding}
Let $X$ be any two-dimensional subshift whose trace $Y$ satisfies one of the following:
\begin{itemize}
\item every point in $Y$ is eventually zero to the right, or
\item $Y$ is countable, and the only periodic point in $Y$ is $0^\Z$.
\end{itemize}
Then $X$ contains a finite point, a highway, or a \froctal{} whose trace is sparse.
\end{theorem}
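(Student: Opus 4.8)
The plan is to reduce to Theorem~\ref{thm:Main} by passing to a suitable almost minimal subsystem and checking that the hypotheses force its projective subdynamics to be sparse. First I would observe that in both cases the point $0^{\mathbb Z^2}$ lies in $X$: in the first case, any point with all rows eventually zero to the right has $\mathbb Z$-translates converging to $0^{\mathbb Z}$ in the projective subdynamics $Y$, and pulling back through the second coordinate (using compactness of $X$ and taking a limit of vertical-then-horizontal shifts) yields $0^{\mathbb Z^2}\in X$; in the second case $0^{\mathbb Z}\in Y$ is given directly as the unique periodic point (note $Y$ countable and nonempty forces a periodic point by a standard compactness argument), and again it lifts. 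Next I would run the "restrict to an almost minimal subsystem" technique advertised in the introduction: take a minimal-rank (in the appropriate Cantor–Bendixson / sparsity-measuring sense) subsystem $X'\subseteq X$ that still properly contains $\{0^{\mathbb Z^2}\}$; one checks that $X'$ can be chosen almost minimal, with $0^{\mathbb Z^2}$ as the fixed point, by iterating "pass to the orbit closure of a non-fixed point" and using that the relevant rank is well-founded. If no such $X'$ exists, then $X=\{0^{\mathbb Z^2}\}$, contradicting that we want to produce a nontrivial configuration — so one has to argue that the hypotheses genuinely force $X\neq\{0^{\mathbb Z^2}\}$, or else restate the conclusion as trivially satisfied; I would handle the degenerate case separately at the start.

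The second main step is to verify that the projective subdynamics of $X'$ is sparse. In the countable case this is immediate: the projective subdynamics of $X'$ is a subshift of $Y$, hence countable, and a countable subshift whose only periodic point is $0^{\mathbb Z}$ has the property that every point is generated by shifts converging to $0^{\mathbb Z}$; I would need the small lemma that a countable one-dimensional subshift with $0^{\mathbb Z}$ as its unique periodic point consists only of configurations of finite support — this follows because a non-eventually-zero point would, via a compactness/pumping argument on the countably many words, produce a nonzero periodic point. In the "eventually zero to the right" case, sparsity is not immediate (eventually zero to the right does not imply finite support), so here I would instead feed Theorem~\ref{thm:OneDimensionalMain} or the one-sided structure into the argument: the relevant claim is that after restricting to an almost minimal $X'$ whose fixed point is $0^{\mathbb Z^2}$, each row of each point of $X'$, being a limit of shifts inside a subshift all of whose points are eventually zero to the right, is in fact finitely supported — because almost minimality forces the horizontal shift-orbit of a nonzero row to accumulate on $0^{\mathbb Z}$ from both sides.

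With an almost minimal $X'$ of sparse projective subdynamics in hand, Theorem~\ref{thm:Main} applies and tells us $X'$ — hence $X$ — contains (as an orbit closure) a finite point, a highway, or a blob fractal, and the projective subdynamics of that orbit closure is sparse because it is contained in the sparse projective subdynamics of $X'$. That finishes the extraction. The step I expect to be the genuine obstacle is the sparsity verification in the "eventually zero to the right" case: bridging from the one-sided, directional hypothesis to honest finite support requires the careful use of almost minimality (and possibly the path-drawing technique) to rule out configurations with an infinite tail to the left, and getting the subsystem $X'$ to simultaneously be almost minimal and have this two-sided decay property is the delicate part; the countable case, by contrast, should be essentially a packaging of standard facts about countable subshifts.
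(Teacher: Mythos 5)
Your overall route --- extract an almost minimal subsystem with fixed point $0^{\Z^2}$, verify that its projective subdynamics is sparse, and invoke Theorem~\ref{thm:Main} --- is exactly the paper's (Lemma~\ref{lem:Extract} followed by Theorem~\ref{thm:Main}), but the sparsity verification has a genuine gap. The ``small lemma'' you lean on in the countable case, that a countable one-dimensional subshift whose unique periodic point is $0^\Z$ consists only of finite-support configurations, is false: the orbit closure of the characteristic function of $\{\pm 2^{i} \;|\; i \in \N\}$ is countable, its only periodic point is $0^\Z$, and it contains points of infinite support (this very example appears in the paper right after Lemma~\ref{lem:RowsparseContainsAlmostMinimal}, where it is used to show the two hypotheses are incomparable). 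So a non-eventually-zero point does not ``pump'' a nonzero periodic point, and sparseness cannot be read off the one-dimensional projective subdynamics alone. The same issue defeats your sketch for the eventually-zero-to-the-right case: a row whose shift orbit accumulates on $0^\Z$ from both sides can still have infinite support (same example), so that mechanism does not yield finite support either. In both cases the sparsity must be extracted from the \emph{two-dimensional} almost minimality: countability gives an isolated point of the projective subdynamics with a nonzero isolating word, and the eventually-zero hypothesis gives (by compactness) a word to the right of which only zeros can occur; almost minimality forces such a word to occur within bounded distance of every nonzero symbol, and a pigeonhole argument then shows that an infinitely-supported row would produce a nonzero periodic row, a contradiction. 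This is the content of the two lemmas in Section~3.2 of the paper and is the ingredient missing from your proposal.

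A secondary issue: in the countable case, knowing $0^\Z \in Y$ does not by itself ``lift'' to $0^{\Z^2} \in X$. What the paper actually shows is that every point of the $k$-row projective subdynamics accumulates on the all-zero point (minimal subsystems of a countable subshift are finite, hence periodic, hence all-zero here), so every point of $X$ contains arbitrarily large balls of zeroes; this stronger fact both puts $0^{\Z^2}$ in $X$ and guarantees that the \emph{only} minimal subsystem of $X$ is $\{0^{\Z^2}\}$, which is what makes the almost minimal extraction go through. Your Cantor--Bendixson-rank iteration is not obviously well-founded when $X$ is uncountable; the paper instead gets the extraction from Zorn's lemma together with a compactness argument on decreasing nets of nontrivial subsystems.
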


The way we prove Theorem~\ref{thm:Main} is by taking an arbitrary nonzero point in the subshift $X$ (as the orbit of any such point is dense in the subshift), and attempting to build a \froctal{} structure for the point. This process ends if the point is finite, or if its support contains an infinite path. We then show that every sparse almost minimal subshift where the support of some configuration contains an infinite path in fact contains only uniformly recurrent ascending paths of uniformly bounded width, i.e. highways, which is the most technical part of the proof. 

Intuitively, once we have a configuration containing a path, we first show that it can be made ascending, and then a bounded-width ascending path configuration is found as follows: 
Since the subshift has sparse trace, there is a global limit on the number of parallel paths one can find in a configuration. So we take a maximal number of paths that can be laid parallel to each other (with arbitrarily large separation) in a configuration. Then we observe that if the width of one of these paths is not bounded, we can extract one more path, which is a contradiction. It follows that all the paths must be of bounded width, and we have found our path. By finding a uniformly recurrent point in the orbit-closure of the path, we obtain a highway.

Our formalization of the notion of `path' is quite explicit. Namely, we study dynamical systems of paths as objects in their own right in Section~\ref{sec:Paths}, and then study the path covers of subshifts, where paths are overlaid on top of nonzero symbols of the configuration. This allows a relatively direct translation of the above idea to a proof.

\subsection{Corollaries}
\label{sec:NewProofs}

In this section, we briefly discuss four theorems from the literature that are given new proofs in this paper using Theorem~\ref{thm:Main}. The first one is the following theorem of Pavlov and Schraudner \cite[Theorem~6.4]{PaSc15}:

\begin{theorem}
\label{thm:FullPavlovSchraudner}
If a $\Z$-subshift $Y$ has universal period and is not a finite union of periodic points, then it is not the horizontal trace of any $\Z^2$-SFT $X$.
\end{theorem}

Universal period means that every point is periodic, except possibly at a bounded number of cells.
Reducing this to Theorem~\ref{thm:Main} is straightforward, but somewhat technical, and we do this in Theorem~\ref{thm:FullPavlovSchraudnerProof}. However, the following weaker version of Theorem~\ref{thm:FullPavlovSchraudner} is a direct corollary of Theorem~\ref{thm:Main}:

\begin{theorem}
\label{thm:PavlovSchraudner}
If a nontrivial $\Z$-subshift $Y$ is sparse, then it is not the horizontal trace of any $\Z^2$-SFT $X$.
\end{theorem}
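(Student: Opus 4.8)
The plan is to assume that some nontrivial sparse $\Z$-subshift $Y$ is the $\Z$-projective subdynamics of a $\Z^2$-SFT $X$, and to force $X$ to contain a configuration with a nonzero \emph{periodic} row; this is absurd, since every row of a configuration of $X$ lies in $Y$, $Y$ is sparse, and a nonzero periodic sequence has infinite support. Two easy preliminary observations: since $Y$ is nontrivial and sparse it contains a nonzero configuration of finite support, and pushing that configuration to infinity with the shift shows $0^\Z \in Y$; and a diagonal argument shows $0^{\Z^2} \in X$ (for each $k$, translate a configuration of $X$ horizontally far enough to clear the finitely many nonzero cells lying in rows $-k,\dots,k$, and take a limit). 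In particular $X$ contains the fixed point $0^{\Z^2}$ and $X \neq \{0^{\Z^2}\}$.

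Next I would pass to a nontrivial almost minimal subsystem $X' \subseteq X$ --- this is the ``restrict to an almost minimal subsystem'' technique from the introduction, available because $X$ is a nontrivial subshift containing the fixed point $0^{\Z^2}$. The subsystem $X'$ still has sparse projective subdynamics, since the projective subdynamics of any subsystem of $X$ is contained in that of $X$; and its fixed point must be $0^{\Z^2}$, as any other constant configuration has non-sparse rows. Theorem~\ref{thm:Main} then applies to $X'$, giving $X' = \OC{c}$ with $c$ a finite point, a highway, or a blob fractal. Here $c \in X' \subseteq X$, and $c \neq 0^{\Z^2}$ because $X'$ is nontrivial, so $c$ has a nonzero row; moreover, since $c \in X'$, every row of $c$ has finite support.

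It remains to produce, in each case, a configuration of $X$ with a nonzero periodic row; write $N$ for the window size of the SFT $X$. If $c$ is a finite point with support in a window $W$, pick $L$ larger than the width of $W$ plus $2N$ and superpose the horizontal translates $\sigma^{(Lm,0)}c$, $m \in \Z$ (their supports are disjoint): every $N$-window of the result either meets at most one translate, hence agrees with a window of $c$, or is all-zero, hence agrees with a window of $0^{\Z^2}$, so, $X$ being an SFT, the result lies in $X$; a nonzero row of $c$ now recurs with period $L$. If $c$ is a blob fractal, the ``larger and larger separation'' in its recursive definition (Section~\ref{sec:Definitions}) yields a nonzero finite sub-pattern $P$ of $c$ whose surrounding zero-margin inside $c$ exceeds $N$; superposing coarsely spaced translates of $P$ exactly as above again lands in $X$ and has a nonzero periodic row. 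If $c$ is a highway, one superposes translates $\sigma^{(Lm,0)}c$ with $L$ large: because $c$ is an ascending path of bounded width, over any $N$ consecutive rows its support has bounded horizontal extent (Section~\ref{sec:Paths}), so for $L$ large the translates are disjoint and no $N$-window meets more than one of them, and the same SFT argument places the superposition in $X$; a nonzero (finite-support) row of $c$ then recurs with period $L$. In every case we contradict sparseness of $Y$.

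The main obstacle is the highway case --- specifically, making precise that an ascending path of bounded width stays within a bounded horizontal band over any bounded set of consecutive rows, so that the coarse horizontal translates of the highway can be overlaid without interfering inside any $N$-window. The almost-minimal reduction and the ``superpose coarsely spaced translates of a finite padded pattern'' argument are routine, and the blob-fractal case is just the finite-point case once the right sub-pattern has been located, so the path-geometry of the highway --- which is anyway the technical heart of Theorem~\ref{thm:Main} itself --- is where the real work sits.
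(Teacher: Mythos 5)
Your proposal is correct and follows essentially the paper's own route: extract an almost minimal subsystem, apply Theorem~\ref{thm:Main}, and glue horizontal translates of the resulting finite point, highway, or (padded piece of a) blob fractal inside the SFT — the paper's ``zero-gluing'' — to force a non-sparse row of the projective subdynamics. One small caveat: the almost-minimal extraction is justified by sparseness of the rows (Lemma~\ref{lem:Extract}, via Lemma~\ref{lem:RowsparseContainsAlmostMinimal}), not merely by nontriviality plus the presence of the fixed point $0^{\Z^2}$, which by itself does not guarantee an almost minimal subsystem.
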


This follows from Theorem~\ref{thm:Main} by restricting to an almost minimal subshift of $X$ 
and observing that horizontal translates of configurations of any of the three types in Theorem~\ref{thm:Main} (finite configurations, highways or \froctals{}) can be freely glued within the SFT to obtain a non-sparse trace (even positive entropy). This is proved as Theorem~\ref{thm:PavlovSchraudnerProof}.

Nilpotency is an important notion in the theory of cellular automata, and it is perhaps the best-known undecidable property for cellular automata on the full shift \cite{Ka92,AaLe74}. The next theorem is due to Guillon and Richard \cite[Theorem~4]{GuRi10}. 
A CA is \emph{nilpotent} if every point is mapped in finite time to the all-zero configuration, and \emph{asymptotically nilpotent} if every configuration tends to the all-zero configuration.

\begin{theorem}
\label{thm:GuillonRichard}
Let $X$ be a one-dimensional transitive SFT and $f : X \to X$ a cellular automaton. Then $f$ is nilpotent if and only if it is asymptotically nilpotent. 
\end{theorem}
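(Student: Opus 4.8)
The plan: ``nilpotent $\Rightarrow$ asymptotically nilpotent'' is immediate, since $f^N(X)=\{0^\Z\}$ forces $f^t(x)=0^\Z$ for all $x$ and all $t\ge N$, so I will assume $f$ is asymptotically nilpotent but not nilpotent and derive a contradiction (continuity already forces $f(0^\Z)=0^\Z$). First I would repackage $(X,f)$ as a two-dimensional subshift: let $Z\subseteq\Sigma^{\Z^2}$ be the set of bi-infinite $f$-orbits, i.e.\ configurations $z$ with $z(\cdot,t)\in X$ and $f(z(\cdot,t))=z(\cdot,t+1)$ for all $t\in\Z$. A bi-infinite orbit through $x$ exists iff $x\in\bigcap_n f^n(X)$, which by compactness is nontrivial since $f$ is not nilpotent, so $Z$ is a nontrivial $\Z^2$-subshift (the ``apply $f$'' generator is invertible on $Z$, being a reindexing of time). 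The point of the construction is that asymptotic nilpotency says exactly that $z(\cdot,t)\to0^\Z$ as $t\to+\infty$ for every $z\in Z$; hence each column $t\mapsto z(i,t)$ of $z$ is eventually zero to the right. Transposing space and time, the set of columns is the horizontal projective subdynamics of the transposed subshift, and it is closed (a continuous image of the compact $Z$). Thus the transposed subshift is a nontrivial two-dimensional subshift satisfying the first hypothesis of Theorem~\ref{thm:MainFinding}, and that theorem produces, back in $Z$, a nonzero configuration $w$ that is a finite point, a highway, or a blob fractal.

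Now I would rule out each possibility. Suppose first that $w$ contains a nonzero finite pattern surrounded on all sides by zeros in an arbitrarily wide window. This is the case if $w$ is a finite point or a ``non-drifting'' highway (then the window is a whole half-plane of zero rows lying below a nonzero row) and also if $w$ is a blob fractal (using the nested-separation structure of Section~\ref{sec:Definitions}). Let $t^\ast$ be the lowest row meeting this pattern; then $w(\cdot,t^\ast-1)$ is zero throughout a long window, and since $f(0^\Z)=0^\Z$ and $f$ has finite radius, $w(\cdot,t^\ast)=f(w(\cdot,t^\ast-1))$ is forced to be zero on a slightly shorter window that still contains a nonzero cell of row $t^\ast$, a contradiction. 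So $w$ must be a genuinely drifting highway. Then $w(\cdot,0)\in X$ is a nonzero configuration of finite support (boundedness of the row supports uses that the guiding uniformly recurrent sequence has no long flat runs), every row $w(\cdot,t)$ is nonzero, and $\mathrm{supp}(w(\cdot,t))$ has width at most some constant $W$ and, after a reflection if needed, drifts off to $+\infty$ as $t\to+\infty$; in other words $w$ is a glider.

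To finish, I would populate a single configuration with infinitely many well-separated copies of this glider. Using that $X$ is an SFT, so patterns of $X$ may be separated by arbitrarily long runs of zeros (transitivity is also available), build $d\in X$ that agrees with the translate $\sigma^{P_i}(w(\cdot,0))$ near each of a sequence of positions $P_1>P_2>\cdots\to-\infty$ with consecutive gaps exceeding $3W+2r$, $r$ the radius of $f$, and equals $0$ elsewhere. Because the copies are exact translates of one configuration they evolve in lockstep, so their supports stay pairwise more than $2W+2r$ apart; an induction on $t$ then shows the radius-$r$ neighbourhoods of distinct copies are disjoint at every step, hence $f^t(d)$ is exactly the disjoint union of the shifted evolutions $\sigma^{P_i}(w(\cdot,t))$, with zeros between them. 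Since $\mathrm{supp}(w(\cdot,t))$ moves rightward by at most $r$ per step and tends to $+\infty$, for each $i$ there is a time $t_i$, with $t_i\to\infty$ as $P_i\to-\infty$, at which $P_i+\mathrm{supp}(w(\cdot,t_i))$ lands inside a fixed window around the origin, where $w(\cdot,t_i)$ is nonzero; so $f^{t_i}(d)$ is nonzero near the origin at arbitrarily large times $t_i$, contradicting $f^t(d)\to0^\Z$.

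The step I expect to be the main obstacle is this last, drifting-highway (glider) case. One must see that bounded-width gliders placed at a large but \emph{constant} distance never interfere: the naive ``information leaves a glider at speed $r$, so it eventually reaches the neighbouring copy'' intuition is misleading, and the right point is the per-step disjointness of radius-$r$ neighbourhoods, which works only because a highway has a fixed width and because using exact translates of a single configuration makes the pairwise distances literally non-decreasing. The remaining care is routine: handling the blob-fractal and degenerate-highway cases by one uniform ``zero-moat'' argument, checking the guiding-sequence facts behind finiteness of the glider, and making sure Theorem~\ref{thm:MainFinding} is applied to a nontrivial subshift so that the configuration it returns is genuinely nonzero.
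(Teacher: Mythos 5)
Your skeleton is the same as the paper's: pass to the (transposed) spacetime subshift over the limit set, note that asymptotic nilpotency makes every row eventually zero to the right, invoke Theorem~\ref{thm:MainFinding}, kill finite points and blob fractals by the ``something appears from nothing'' argument coming from upward determinism, and finally glue infinitely many well-separated copies of the surviving localized object into the SFT to contradict asymptotic nilpotency. Your last step is a genuine (and correct) small deviation: the paper first upgrades the highway to a \emph{singly periodic} point, i.e.\ a genuine glider with $f^n(x)=\sigma^m(x)$ (Propositions~\ref{prop:UpwardDeterminism} and~\ref{prop:DeterministicContainsSinglyPeriodic}), whereas you avoid periodicity altogether and run the gluing with exact translates evolving in lockstep; that part of your argument, including the per-step disjointness of radius-$r$ neighbourhoods and the zero-gluing of patterns into the SFT, is fine.

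The genuine gap is the step ``so $w$ must be a genuinely drifting highway, hence every row $w(\cdot,t)$ is nonzero, of width at most $W$, and its support drifts to $+\infty$''. A highway produced by Theorem~\ref{thm:MainFinding} in the \emph{transposed} subshift is a uniformly recurrent path that ascends in the \emph{space} direction; nothing in the definition controls its time-component, and the sparseness you have constrains only the traces (the rows of the transposed subshift), not its columns. In particular the time-component can oscillate with no long flat runs at all --- e.g.\ alternate between two time values while the space coordinate increases --- and then the spatial rows $w(\cdot,t)$ are infinite and there is no drift, so your parenthetical justification (``the guiding uniformly recurrent sequence has no long flat runs'') does not address the problematic case. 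Excluding bounded and oscillating time-components is precisely the nontrivial content of the paper's Propositions~\ref{prop:UpwardDeterminism} and~\ref{prop:DeterministicContainsSinglyPeriodic}: one applies the classification of minimal path spaces (Theorem~\ref{thm:PathCharacterization}, or Theorem~\ref{thm:PathCharacterizationGeneral}) to the time-component of the recurrent path, and kills the bounded case and the ``returns to a strip infinitely often'' case by a something-from-nothing argument at record minima of the time coordinate (take limits of translates centered at deeper and deeper local minima; the limit configuration has a nonzero cell whose entire radius-$r$ neighbourhood one time-step earlier is zero). Only after this do you know the time-component is uniformly monotone, which is exactly what yields nonempty, uniformly bounded rows drifting at a positive linear rate --- the facts your final gluing argument consumes. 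As written, the drifting-highway case, which is the heart of the proof, is asserted rather than proved.
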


This is a direct corollary of the following, proved as Proposition~\ref{prop:TraceLimitSpaceshipProof}, which in turn is a direct corollary of Theorem~\ref{thm:Main}. By a \emph{glider} we mean a non-trivial finite configuration that is shifted by some power of the cellular automaton.

\begin{proposition}
\label{prop:TraceLimitSpaceship}
Let $f : X \to X$ be a cellular automaton on a one-dimensional subshift $X$. If either $f$ is asymptotically nilpotent, the limit set of $f$ is sparse, or the closure of the asymptotic set of $f$ is sparse, then $f$ has a glider.
\end{proposition}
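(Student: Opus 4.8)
The plan is to reduce each of the three hypotheses to a situation where Theorem~\ref{thm:Main} applies, and then read off the glider (or more generally highway/blob fractal, which we will rule out) from the classification. First I would recall that a cellular automaton $f\colon X\to X$ on a one-dimensional subshift $X$ has a naturally associated two-dimensional subshift, its \emph{spacetime subshift} (or trace subshift along with the spatial direction), whose configurations record, in row $t$, the image $f^t(x)$ of some point $x\in X$. Call this $\tilde X\subseteq \Sigma^{\Z^2}$. Its horizontal projective subdynamics is contained in $X$, and $\tilde X$ is shift-invariant in both directions (the spatial shift, and the temporal shift given by $f$ together with the shift, which are both generated by translations once we set things up with the second coordinate running in the time direction). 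So $\tilde X$ is a genuine $\Z^2$-subshift.

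Next I would translate the three hypotheses into a statement about the projective subdynamics of $\tilde X$ or of a suitable subsystem. If $f$ is asymptotically nilpotent, then along any spacetime configuration the rows tend to $0^\Z$, so the $\omega$-limit (in the time direction) of $\tilde X$ has only the all-zero row appearing in the relevant limiting sense; passing to the closure of the asymptotic set handles the other two cases uniformly. In all three cases I would extract a two-dimensional subshift $Z\subseteq\tilde X$ (or built from the limit set / asymptotic set of $f$, viewed spacetime-wise) that is nonempty, shift-invariant, and has \emph{sparse} horizontal projective subdynamics — for the asymptotic-nilpotency case one has to argue that convergence to $0$ of every point forces the asymptotic configurations to have finite support in each row, which is where the hypothesis is really used. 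Then I would restrict $Z$ to an \emph{almost minimal} subsystem $Z'\subseteq Z$: one that contains $0^{\Z^2}$ as its unique fixed point and in which every other point generates the whole system. Such a subsystem exists by a standard Zorn's-lemma / minimal-subsystem-above-the-fixed-point argument (this is one of the two techniques the introduction advertises), provided $Z$ itself is not just $\{0^{\Z^2}\}$ — and if $Z=\{0^{\Z^2}\}$ then $f$ is actually nilpotent and any nonzero finite configuration eventually mapped to $0$ gives a trivial glider, or one argues directly.

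Now Theorem~\ref{thm:Main} applies to $Z'$: it is the orbit-closure of a finite point, a highway, or a blob fractal. I would then argue that in our setting the latter two options degenerate, because $Z'$ sits inside a spacetime subshift where the time direction is \emph{deterministic}: row $t+1$ is determined by row $t$ via $f$. A highway is a uniformly recurrent ascending path configuration of bounded width; in a deterministic spacetime picture an infinite ascending path of bounded width that recurs uniformly is exactly a glider trajectory — a finite configuration shifted by a power of $f$ — so the "highway" case actually \emph{delivers} a glider rather than a new phenomenon. Similarly, a blob fractal in a deterministic spacetime would require finite blobs at larger and larger temporal separations to be consistent with the deterministic update, which (combined with asymptotic nilpotency / sparseness of the limit set) forces the blobs to be shifted finite configurations, again a glider; alternatively one shows the blob-fractal case cannot occur under determinism and asymptotic convergence and it is enough that the finite case or the highway case must hold. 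Either way, the conclusion is that $f$ has a glider.

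The main obstacle I expect is the bookkeeping in the reduction: correctly setting up the spacetime subshift so that it is an honest $\Z^2$-subshift with the right shift actions, verifying that the three hypotheses each genuinely yield \emph{sparse} horizontal projective subdynamics of the extracted subsystem (the asymptotic-nilpotency case is the subtle one — one must rule out configurations whose rows have infinitely many nonzero symbols but still "tend to zero" in a weak sense), and showing that "highway" and "blob fractal" in a deterministic spacetime both reduce to a glider. The almost-minimal reduction itself is routine given the techniques of the paper, and once Theorem~\ref{thm:Main} is invoked the final identification of the highway/finite case with a glider is essentially a matter of unwinding definitions.
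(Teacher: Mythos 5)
Your overall architecture for the two ``sparse'' hypotheses (form the spacetime subshift, extract an almost minimal rowsparse subsystem, apply Theorem~\ref{thm:Main}, then use determinism of the time direction to exclude finite points and blob fractals and to upgrade the highway to a glider) is essentially the paper's route, which goes through Lemma~\ref{lem:Extract} and Proposition~\ref{prop:DeterministicContainsSinglyPeriodic}. But there is a genuine gap in your treatment of the asymptotic-nilpotency case. You propose to argue that ``convergence to $0$ of every point forces the asymptotic configurations to have finite support in each row.'' This is not something you can argue: asymptotic nilpotency means $f^n(x)\rightarrow 0^\Z$ in the product topology, i.e.\ every \emph{finite window} is eventually zero, and it is perfectly compatible, a priori, with every configuration $f^n(x)$ having infinitely many nonzero symbols (nonzero data drifting off to infinity). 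Showing this cannot happen is essentially the content of the Guillon--Richard theorem you are trying to reprove, so assuming it begs the question; the rows of the spacetime subshift with time running vertically simply are not known to be sparse, nor even eventually zero, under this hypothesis.

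The paper's fix is a change of coordinates you are missing: in the asymptotically nilpotent case it builds the spacetime subshift with the CA ``run to the right,'' so that a \emph{row} is the trace $t\mapsto f^t(x)_i$ of a single cell over time. Asymptotic nilpotency then says exactly that every row is eventually zero to the right, which is one of the alternative hypotheses of Lemma~\ref{lem:Extract} (no sparseness is claimed at this stage; rowsparseness of the extracted almost minimal subsystem comes afterwards, via Lemma~\ref{lem:EventuallyZeroRight}). The extracted subsystem still has a deterministic direction, and Proposition~\ref{prop:DeterministicContainsSinglyPeriodic} yields a singly periodic point, i.e.\ a glider. A secondary, smaller point: your claim that a highway in a deterministic spacetime ``is exactly a glider trajectory'' also needs the determinism argument spelled out --- a highway guided by, say, a Sturmian sequence is uniformly recurrent but not periodic, so one must use the local rule of the deterministic direction together with eventual periodicity of the rows to force a rational direction and a singly periodic point, as the paper does in Propositions~\ref{prop:UpwardDeterminism} and~\ref{prop:DeterministicContainsSinglyPeriodic}.
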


An asymptotically nilpotent CA on an SFT cannot have a glider, as gluing infinitely many gliders together with bounded gaps would clearly give a configuration that does not tend to the all-zero configuration. Thus Theorem~\ref{thm:GuillonRichard} follows from Proposition~\ref{prop:TraceLimitSpaceship}.

We also state two decidability results obtained by the author and coauthors, for which we obtain new proofs. The following is one of the main decidability results in \cite{SaTo12c}.

\begin{theorem}
\label{thm:SaloTorma}
If $X$ is a one-dimensional countable sofic shift, then nilpotency of cellular automata on $X$ is decidable.
\end{theorem}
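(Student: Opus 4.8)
The plan is to derive Theorem~\ref{thm:SaloTorma} from Proposition~\ref{prop:TraceLimitSpaceship} (equivalently from Theorem~\ref{thm:Main}) by turning the dichotomy ``has a glider vs.\ nilpotent'' into a decision procedure. First I would recall that on a one-dimensional countable sofic shift $X$, the unique periodic point structure forces a natural candidate for the ``zero'' symbol: a countable sofic shift has a fixed point $0^\Z$ (after passing to an appropriate subsystem / relabeling), and the limit set $\Omega_f = \bigcap_n f^n(X)$ is again a sofic shift, which is computable from $f$ and the SFT cover of $X$. The key structural input is that on a countable sofic shift, $\Omega_f$ is itself countable and sofic, so its projective subdynamics (it is already one-dimensional, so ``projective subdynamics'' just means $\Omega_f$ itself) is sparse \emph{iff} $\Omega_f$ contains no nonzero periodic point, i.e.\ iff $f$ has no nonzero spatially periodic point in its limit set.

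Next I would set up the actual algorithm. Nilpotency of $f$ is equivalent to $\Omega_f = \{0^\Z\}$. The plan is to run two semi-decision procedures in parallel. The first searches for a witness of non-nilpotency: enumerate candidate gliders, i.e.\ finite configurations $g$ and powers $k$ with $f^k(g) = \sigma^m(g)$ for some shift $m$; by Proposition~\ref{prop:TraceLimitSpaceship}, if $f$ is not nilpotent then (because on an SFT, and hence on a sofic shift via its cover, a countable limit set is automatically sparse once there is no nonzero periodic point — the remaining case of a nonzero periodic point is detected directly) $f$ has a glider, and a glider is a finite, hence effectively enumerable, object that can be verified in finite time. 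The second procedure searches for a proof of nilpotency: since $X$ is sofic with a given SFT cover, and $f$ lifts to (or can be simulated on) that cover, nilpotency of $f$ is semi-decidable by the standard argument — enumerate $n$ and check whether $f^n$ maps the whole (compact, SFT-presented) space to $0^\Z$, which is a decidable condition for each fixed $n$ because it reduces to emptiness of an SFT. One of the two searches always halts, giving decidability.

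The step I expect to be the main obstacle is the reduction showing that the only obstruction to applying the ``limit set is sparse'' branch of Proposition~\ref{prop:TraceLimitSpaceship} is the presence of a nonzero periodic point, and that this obstruction is itself decidable. Concretely: on a general countable sofic $X$ one must be careful that $X$ may contain several periodic points (not just $0^\Z$), so ``glider'' and the notion of the relevant zero symbol must be pinned down; I would argue that a countable sofic shift decomposes into finitely many ``almost minimal-like'' pieces whose periodic points are explicitly computable (using the Cantor--Bendixson rank of $X$, which is finite and computable for countable sofic shifts, cf.~\cite{SaTo12c,BaDuJe08}), reduce to the piece containing the image dynamics, and there invoke Theorem~\ref{thm:MainFinding}/Proposition~\ref{prop:TraceLimitSpaceship}. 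Detecting whether $\Omega_f$ contains a nonzero periodic point of period $p$ is, for each $p$, a decidable SFT-emptiness question; and one must show an a priori bound on $p$ in the countable sofic setting (again via Cantor--Bendixson rank) so that only finitely many periods need to be checked — this boundedness, rather than the existence of the algorithm in principle, is the delicate point. Once these pieces are in place, combining them with the halting of one of the two parallel searches yields the decidability statement; the details are carried out in the referenced proof of Theorem~\ref{thm:SaloTorma}.
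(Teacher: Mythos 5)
You have correctly identified where the difficulty lies (nonzero periodic points), but your treatment of that case has genuine gaps. First, your algorithm repeatedly treats the limit set $\Omega_f=\bigcap_n f^n(X)$ as an effectively given object: you assert that it ``is again a sofic shift, which is computable from $f$ and the SFT cover,'' and later that detecting a nonzero $p$-periodic point in $\Omega_f$ is ``a decidable SFT-emptiness question.'' Neither claim is justified: $\Omega_f$ is only a decreasing intersection of sofic shifts, membership of even a single spatially periodic point in $\Omega_f$ is a priori a $\Pi^0_1$ condition (non-membership is semidecidable, membership is not), and computability of limit sets is essentially what is at stake in nilpotency questions, so it cannot be assumed without proof; the appeal to a Cantor--Bendixson bound on the relevant periods is likewise asserted, not established (and, if the witnesses were semidecidable, would be unnecessary, since one can dovetail over all periods). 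Second, even if you replace that test by the genuinely semidecidable witness ``nonzero spatially periodic $q\in X$ with $f^j(q)=q$ for some $j\geq 1$,'' you never show your witness set (gliders plus such $q$) is complete. The problematic scenario: $\Omega_f$ contains a nonzero spatially periodic point whose forward $f$-orbit nevertheless dies, its membership in $\Omega_f$ being sustained only by non-periodic preimages. Then $\Omega_f$ is not sparse, Proposition~\ref{prop:TraceLimitSpaceship} does not apply, no glider is guaranteed, and no nonzero temporally periodic spatially periodic point need exist -- so both of your parallel searches could run forever on a non-nilpotent $f$. (Your parenthetical that a countable limit set with no nonzero periodic point is automatically sparse is true in this setting, but for a different reason than the one you give: subshifts of countable sofic shifts are bounded, and a bounded subshift whose only periodic point is $0^\Z$ is uniformly sparse.)

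The paper's proof avoids exactly this trap by not reducing to the zero-background/glider case. It uses the characterization of countable sofic shifts as bounded subshifts (\cite{Sa14}), passes to the vertically deterministic spacetime subshift of $f$, and -- as in the proof of Theorem~\ref{thm:FullPavlovSchraudnerProof} -- blocks and factors the periodic backgrounds onto a single zero symbol to obtain a rowsparse subshift, to which Propositions~\ref{prop:UpwardDeterminism} and~\ref{prop:DeterministicContainsSinglyPeriodic} apply. This yields the dichotomy of Theorem~\ref{thm:SaloTormaProof}: either some $f^k(X)$ is spatially periodic (hence the eventual image is finite and a nonzero temporally periodic, spatially periodic point exists when $f$ is not nilpotent), or $f$ has a \emph{spaceship} ${}^\infty u\,v\,w^\infty$ whose backgrounds $u^\infty,w^\infty$ may be nonzero periodic. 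Both kinds of witnesses are finitely verifiable on a sofic shift, so dovetailing their enumeration against the standard semidecision of nilpotency (your second procedure, which is fine) gives decidability. To repair your argument you should enlarge the witness set from gliders to spaceships over eventually periodic backgrounds and prove the completeness of the witnesses via the boundedness/blocking reduction, rather than via any claimed computability of $\Omega_f$.
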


After characterizing countable sofic shifts as ones consisting of periodic patterns and finitely many transitions between them, this theorem follows from Proposition~\ref{prop:TraceLimitSpaceship} by observing that the existence of gliders for a CA on a sofic shift is semidecidable, and nilpotency is semidecidable.

Finally, we mention an observation made in \cite{BaKaSa16} about the topological full group of a full shift, which follows from Theorem~\ref{thm:Main}, after interpreting the topological full group of a one-dimensional subshift as a two-dimensional subshift in a suitable way. See Theorem~\ref{thm:BarbieriKariSaloProof} for the proof. 


\begin{theorem}
\label{thm:BarbieriKariSalo}
If $X$ is a one-dimensional full shift, then the torsion problem of the topological full group of $X$ is decidable.
\end{theorem}





The one-dimensional version of our main theorem also has some corollaries of interest. We use it to show that one can say something about the possible patterns in prime numbers using almost none of their specific properties. Let $X_{\mathcal{P}}$ be the subshift whose language contains $w \in \{0,1\}^*$ if and only if for infinitely many $n$, we have the equivalence $\forall a \in [0,|w|-1]: n+a \mbox{ is prime} \iff w_a = 1$. See Proposition~\ref{prop:Primes} for the proof.

\begin{proposition}
The subshift $X_{\mathcal{P}}$ contains either a finite point or a \froctal{}.
\end{proposition}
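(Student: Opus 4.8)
The plan is to deduce this from Theorem~\ref{thm:OneDimensionalMain} by exhibiting an almost minimal subshift inside $X_{\mathcal{P}}$. First I would check the two features of $X_{\mathcal{P}}$ that make this work: it is a genuine subshift (its language is closed under subwords and is extendable, which is immediate from the $\iff$ characterization), and it contains the fixed point $0^{\Z}$ --- indeed $0^n$ is in the language for every $n$, since there are arbitrarily long gaps between consecutive primes, so infinitely many $n$ have $n, n+1, \dots, n+(k-1)$ all composite. Thus $0^{\Z} \in X_{\mathcal{P}}$.

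Next, pass to a minimal subsystem. Any nonempty subshift contains a minimal one, so let $M \subseteq X_{\mathcal{P}}$ be minimal; it is enough to handle $X = M$ since a finite point or blob fractal inside $M$ is one inside $X_{\mathcal{P}}$. If $M = \{0^{\Z}\}$, then $0^{\Z}$ is itself a finite point and we are done. Otherwise $M$ is infinite. Here I would argue that $M$, being infinite and minimal and sitting inside a subshift that contains $0^{\Z}$, is already close to almost minimal: either $0^{\Z} \notin M$, in which case $M$ is minimal and infinite and hence (every point generates $M$, and there is no fixed point to exclude) we can run the classification on $M$ regarded as almost minimal over the fixed point living in its orbit closure trivially --- more cleanly, take $X' = M \cup \{0^{\Z}\}$; this is a subshift, it contains the fixed point $0^{\Z}$, and every point other than $0^{\Z}$ lies in the minimal set $M$ and hence generates $M$, so its orbit closure is $M \cup \{0^{\Z}\} = X'$. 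Therefore $X'$ is almost minimal, and Theorem~\ref{thm:OneDimensionalMain} applies: $X'$ is the orbit closure of a finite point or of a blob fractal. In the first case that finite point lies in $X' \subseteq X_{\mathcal{P}}$ (after noting the finite point is either $0^{\Z}$ or an element of $M$), and in the second case the blob fractal lies in $X'$ as well; either way $X_{\mathcal{P}}$ contains a finite point or a blob fractal.

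The main thing to get right --- and the only real subtlety --- is the reduction that makes Theorem~\ref{thm:OneDimensionalMain} applicable, namely checking that $X' = M \cup \{0^{\Z}\}$ is genuinely almost minimal in the sense defined in the paper: it is closed (union of two closed shift-invariant sets), shift-invariant, contains a fixed point, and every non-fixed point generates all of $X'$. The last clause needs the observation that the orbit closure of a point of $M$ is $M$ (minimality) and that adding the single fixed point back does not change whether it "generates the whole subshift" --- I would make sure the paper's definition of almost minimal tolerates the fixed point being in the closure of every orbit (it is, since $0^{\Z}$ is a shift-invariant limit and $M$ is infinite so its points have $0^{\Z}$ in their orbit closure only if that holds, which for prime-pattern words it does because long zero blocks recur). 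If the definition is strict about this I would instead apply Theorem~\ref{thm:OneDimensionalMain} directly to a minimal component and separately dispatch the $M = \{0^{\Z}\}$ case as above; either route is short. No computational content about primes is used beyond the existence of arbitrarily long prime gaps.
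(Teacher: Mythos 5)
Your reduction breaks exactly at the step you flag as the ``only real subtlety''. If the minimal subsystem $M$ you pick is infinite, then $0^\Z \notin M$, and by minimality the orbit closure of every $x \in M$ is $M$ itself --- it does \emph{not} contain $0^\Z$. Hence $X' = M \cup \{0^\Z\}$ has subsystem poset $\{\emptyset, \{0^\Z\}, M, X'\}$ and is not almost minimal, so Theorem~\ref{thm:OneDimensionalMain} does not apply to it; nor can you apply the theorem ``directly to a minimal component'', since an infinite minimal subshift contains no fixed point (and the theorem's conclusion is simply false for, say, a Sturmian system). Your parenthetical rescue --- that points of $M$ have $0^\Z$ in their orbit closure ``because long zero blocks recur'' --- is precisely the nontrivial number-theoretic fact you declare you are not using; and note that if it did hold, minimality would force $M=\{0^\Z\}$, collapsing you into the other branch. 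That other branch (``$M=\{0^\Z\}$, so $0^\Z$ is a finite point, done'') trivializes the statement: the intended content, as the surrounding discussion and Proposition~\ref{prop:SunnyInPrimes} make clear, is a \emph{nonzero} finite point or a blob fractal, and $0^\Z$ (which lies in $X_{\mathcal{P}}$ for free) is not an acceptable witness.

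The gap cannot be patched while using only ``arbitrarily long prime gaps'': the paper points out that one can build $N \subset \N$ with arbitrarily long gaps (even with $\pi_N \sim \pi$) such that $X_N = \{0^\Z, 1^\Z\}$, which contains no nonzero finite point and no blob fractal, yet your argument would run verbatim on it. The missing idea is the paper's preceding lemma: by the Chinese remainder theorem, for every $n$ the word $0^n$ occurs \emph{syndetically} in $x_{\mathcal{P}}$, so every point of $X_{\mathcal{P}}$ has $0^\Z$ in its orbit closure, i.e.\ $X_{\mathcal{P}}$ is essentially minimal with $\{0^\Z\}$ its unique minimal subsystem. Only then does the extraction lemma (Lemma~\ref{lem:RowsparseContainsAlmostMinimal} and the Zorn's-lemma argument of Section~\ref{sec:Extracting}) produce a \emph{nontrivial} almost minimal subsystem --- nontrivial because the primes are infinite --- to which Theorem~\ref{thm:OneDimensionalMainProof} applies and yields a nonzero finite point or a blob fractal. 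So while your high-level plan (extract an almost minimal subsystem, then classify) matches the paper's, the extraction step as you perform it is invalid, and the Chinese-remainder syndeticity input is indispensable.
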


We also give a number-theoretic proof of this fact.

\subsection{Constructions}
\label{sec:Constructions}

In Section~\ref{sec:Examples}, we give some constructions of subshifts with sparse traces and path spaces.


To prove the theorems listed in the previous section, we only use the first two cases of the classification result Theorem~\ref{thm:Main}, as in all those cases \froctals{} turn out to be impossible. However, in general \froctals{} do exist, and in some sense they are the more common case, as the other two cases are in some sense degenerate \froctals{} (finite points are \froctals{} where the separation between \blabs{} is infinite, and infinite paths are \blabs{} of infinite size). We show in particular that subshifts with sparse traces generated by \froctals{} need not contain any nonzero configurations where the diameter of the support of every row is bounded. In fact, this fails as badly as possible, see Example~\ref{ex:UnboundedRows}.

We also discuss the computational side of subshifts with sparse traces. We show that a subshift where the support of every point is an ascending path is $\Pi^0_1$ if and only if it is sofic when the movement speed of the path is bounded, but construct a $\Pi^0_1$ non-sofic subshift with sparse traces. (See Section~\ref{sec:Definitions} or \cite{CeRe98} for definitions.)


Our main technical tool besides almost minimality are uniformly recurrent paths. Paths are (up to a small change in viewpoint) a standard object in the dynamical systems literature, and usually go by the name \emph{cocycle} \cite[Definitions~2.1]{Fo00}. We prove a classification of minimal path spaces, and show that there are four types of such spaces: every path is ascending, every path is descending, every path is bounded, or no path is bounded but some path enters the origin infinitely many times. In Section~\ref{sec:Examples}, we show how to build representative examples of paths in each of these classes, and show that the last class splits further into subclasses.

\section{Definitions}
\label{sec:Definitions}


See \cite{LiMa95} for a reference on symbolic dynamics. A topologically closed shift-invariant subset of $\Sigma^{\Z^d}$ is called a \emph{($\Z^d$-)subshift}, and more generally we call expansive actions on closed subsets of Cantor space subshifts. Two subshifts are \emph{conjugate} if there is a shift-commuting homeomorphism between them. Conjugate subshifts have the same dynamical properties. Cantor space is metrizable, and on occasion we use a metric $\dist$ on this space (the fact $d$ has multiple meanings should not cause confusion).

\emph{Points} are elements of $\Sigma^{\Z^d}$; we also call them \emph{configurations}. A \emph{unary point} is a point $x \in \Sigma^{\Z^d}$ with $\forall \bol v \in \Z^d: x_{\bol v} = a$, for some fixed $a \in \Sigma$. We write this as $a^{\Z^d}$. 
The \emph{shift}\footnote{Our shift convention is a standard one in the symbolic dynamics on abelian groups, where the contents of points are actually shifted in direction $-\bol v$ by the action of the vector $\bol v$. This action is contravariant in general, but it is of course covariant in the abelian case.} or \emph{translation} on $\Sigma^{\Z^d}$ is $\sigma^{\bol v}(x)_{\bol u} = x_{\bol u + \bol v}$. We also call points \emph{configurations}. 


 Most subshifts considered in this article contain the unary point $0^{\Z^d}$, and are \emph{pointed}, in the sense that this \emph{zero point} $0^{\Z^d}$ is thought of as being part of the structure (in particular morphisms, by default, map the zero point to the zero point). 
In particular, the alphabets of our subshifts almost always contain $0$, and in almost all notions where a `zero symbol' is needed, this zero symbol is $0$. 
We write $\Sigma = \Sigma_+ \cup \{0\}$ where $\Sigma_+ \not\ni 0$ is the set of \emph{nonzero symbols}. In this article, a \emph{nontrivial} subshift is one containing at least two points. A \emph{finite point} is one where only finitely many nonzero symbols occur. Such points are also known as \emph{homoclinic}.

A subshift is \emph{SFT} (subshift of finite type) if it is defined by a finite set of forbidden patterns (in the sense that it is the largest subshift of some full shift that misses a defining clopen set), and \emph{sofic} if it is the image of an SFT under a shift-commuting continuous function. 

A \emph{$\Pi^0_1$-subshift} is a subshift $X \subset \Sigma^\Z$ that is on the $\Pi^0_1$ level of the arithmetical hierarchy (equivalently the lightface Borel hierarchy). In other words, there exists a Turing machine that outputs a sequence $w_1, w_2, ...$, $w_i \in \Sigma^*$, such that $X$ is defined by forbidding the words $w_i$. See \cite{CeRe98}.


In the study of nilpotency and pointed subshifts, two particularly important subshifts are the \emph{all-zero} or \emph{trivial} subshift $\{0^\Z\}$, and the subshift $X_{\leq 1} = \OC{\cdots 00 1 00 \cdots}$, called the \emph{sunny-side-up subshift}, \emph{one-one subshift} or \emph{one-or-less subshift}, consisting of two-sided sequences over a binary alphabet with at most one nonzero symbol. The \emph{two-dimensional sunny-side-up} is defined similarly as the binary subshift whose configurations sum to at most one.


A ($d$-dimensional) \emph{pattern} is a function $P : D \to \Sigma$ where $D = D(P) \subset \Z^d$ is a (possibly infinite) subset called the \emph{domain} of $P$. Patterns are closely related to \emph{cylinders}. If $P$ is a pattern, we define the cylinder $[P] = \{x \;|\; x|_{D(P)} = P\}$. However, we more commonly use the terminology that a pattern \emph{occurs} or \emph{appears} in a point, or in another pattern, and by this we mean that a translate of the point is in the cylinder defined by the pattern. We denote this by $P \sqsubset x$. We say \emph{$P$ occurs (or appears) at $\bol v$ in $x$} if $\sigma^{\bol v}(x)|_{D(P)} = P$. This allows the use of descriptive terminology such as `$P$ occurs at a bounded distance from every occurrence of $Q$'. For a subshift $X$ we define $P \sqsubset X \iff \exists x \in X: P \sqsubset x$.

In the one-dimensional case, patterns with a contiguous domain are usually called \emph{words}, and the set of words occurring in a one-dimensional subshift (or a point) is called its \emph{language}. 

Abusing notation, if $P : D \to \Sigma$ is a pattern and $E \subset \Z^d$, we write $P|_E : E \to \Sigma$ for the pattern defined by $P|_E(\bol v) = P|_D(\bol v)$ for $v \in D \cap E$, $P|_E(\bol v) = 0$ otherwise. In particular for a pattern $P : D \to \Sigma$, $P|_{\Z^d}$ is the corresponding configuration in $\Sigma^{\Z^d}$ padded with zeroes.

Two-dimensional patterns with domain $\{i\} \times \Z$ are called \emph{columns} and ones with domain $\Z \times \{i\}$ are called \emph{rows}. We identify rows and columns with one-dimensional configurations in an obvious way. The \emph{(horizontal) trace} of a subshift $X \subset \Sigma^{\Z^2}$ is the set of configurations $y \in \Sigma^\Z$ such that $y$ is a row in a point of $X$, and the row $y$ defined by $y_j = x_{j,i}$ is denoted by $x_i$. The trace is a $\Z$-subshift. We mostly stay in the two-dimensional case, but for $\Z^d$-subshifts, by the trace we mean the one-dimensional subshift whose configurations are seen on the first axis; in particular one-dimensional subshifts are their own traces. On occasion, we refer to traces in other rational directions, and the meaning should be clear (configurations seen along stripes).

The trace is not the same as the dynamical system obtained by considering a subaction by a subgroup of $\Z^2$ -- the trace is a one-dimensional subshift, thus always expansive, but subactions need not be. Note that the trace is not a conjugacy-invariant, but most properties of the trace that are of interest to us -- for example, sparseness, boundedness, countability and points being eventually zero -- are.

A \emph{direction} is an element $\bol v \in \R^2 \setminus \{(0,0)\}$, under the equivalence relation that $\bol v$ and $r \bol v$ are considered equal for all $r \in \R^+ = \{r \in \R \;|\; r > 0\}$. A direction $\bol v \in \R^2$ is \emph{rational} if it is equal (as a direction) to a point in $\Q^2$, equivalently one of $\Z^2$. Note that any direction is defined by a point of the unit circle up to translation, but a rational direction may not be equal to a point of $\Q^2 \cap S^1$.


A \emph{cellular automaton} is  a shift-commuting continuous function $f : X \to X$ on a $\Z$-subshift $X$. We say $f$ is \emph{nilpotent} if $\exists n \in \N: \forall x \in X: f^n(x) = 0^\Z$ and \emph{asymptotically nilpotent} if $f^n(x) \rightarrow 0^\Z$ for all $x \in X$. The \emph{limit set} of $f$ is $\bigcap_{n = 0}^\infty f^n(X)$, and the \emph{asymptotic set} is $\bigcup_{x \in X} \bigcap_{n = 0}^\infty \overline{\bigcup_{m \geq n} f^m(x)}$.


The \emph{spacetime subshift} of a cellular automaton $f : X \to X$ is the $\Z^2$-subshift $Y$ whose trace is the limit set of $f$, and in every configuration,\footnote{Note that the \emph{trace of a cellular automaton} refers to the vertical trace of the spacetime subshift in most references, but this should not cause confusion.} the $(i+1)$th row is the $f$-image of the $i$th row for every $i \in \Z$. A \emph{half-plane in direction $\bol v \in \R^2$} is a half-plane whose boundary is orthogonal to $\bol v$ and whose interior contains all vectors $-r\bol v$ for large enough $r \in \N$. A subshift $X$ is \emph{deterministic} in direction $\bol v$ if, for every half-plane $H \subset \R^2$ in direction $\bol v$, the map $x \mapsto x|_{H \cap \Z^2}$ is injective on $X$.

We sometimes use English words for directions: $(1,0)$ is right, $(0,1)$ is up. With cellular automata, we also use the terminology that application of the CA is `temporal movement', and applying the shift means `spatial' movement. In particular, \emph{spatial periodicity} for $x \in X$ means $\exists n \in \Z: \sigma^n(x) = x$, and \emph{temporal periodicity} means $f^n(x) = x$ for some $x \in X, n > 0$. When we say $x$ is \emph{eventually periodic} we typically refer to spatial periodicity, and mean that $x = \ldots uuu v www \ldots$ for some words $u,v,w$. For such $x$, the minimal choice of $|v|$ is the \emph{preperiod} of $x$.


On $\Z^d$, we use the metric $\dist(\bol u, \bol v) = \max |\bol u_i - \bol v_i|$ and sometimes the $\ell^2$-norm $|\bol u|_2 = \sqrt{\sum \bol u_i^2}$. The zero-vector in every dimension is called $\bol 0$. 
Write $B_r(\bol v)$ for the ball of radius $r$ around $\bol v \in \Z^d$, $B_r(\bol v) = \{\bol u \;|\; \dist(\bol u, \bol v) \leq r\}$, and $B_r = B_r(\bol 0)$. Write $B_r(S) = \bigcup_{s \in S} B_r(s)$. If $P$ is a pattern, write $S(P) \subset D(P)$ for the \emph{support} of $P$, that is, the set of cells in $D(P)$ where $P$ has a nonzero symbol. 

A subset $S$ of $\Z^d$ is \emph{$r$-connected} if the undirected graph with nodes $S$ and edges $\{(\bol u,\bol v) \;|\; d(\bol u,\bol v) \leq r\}$ is connected. The \emph{$r$-components} of a subset of $\Z^d$ are defined as its maximal $r$-connected subsets.

A pattern $P$ is \emph{$r$-padded} if $D(P) \supset B_r(S(P))$, that is, $P$ contains all cells that are at most $r$ away from the support of $P$.
An \emph{$r$-\blab{}}\footnote{These were called ``blobs'' in a previous version.} is an $r$-padded pattern for which
\begin{itemize}
\item the domain's nonzero cells are $r$-connected and
\item the domain's zero cells are all at most $r$ away from the support.
\end{itemize}
Note that a \blab{} is uniquely determined by its support and need not be finite (though we mostly deal with finite \blabs{}). In every configuration, every nonzero cell $i$ is in precisely one $r$-\blab{}, for every $r$: the $r$-\blab{} is the $r$-connected component of the nonzero cells that $i$ belongs to, $r$-padded with $0$-symbols.


The \emph{zero-gluing} of two patterns $P, Q$ is a pattern obtained by taking their `disjoint' union configuration, in the sense that only zero symbols may come from both patterns. More precisely, $R$ is a zero-gluing of $P$ and $Q$ if we have $D(R) = D(P) \cup D(Q)$, $P|_{D(P) \cap D(Q)} = Q|_{D(P) \cap D(Q)} = 0^{D(P) \cap D(Q)}$, and $R|_{D(P)} = P$ and $R|_{D(Q)} = Q$. 
We sometimes write this as $R = P + Q$, and write $x + y$ for the zero-gluing of two points when the full patterns they define on $\Z^2$ can be zero-glued. For an infinite sequence of configurations $x^i$, we also use the notation $\sum_i x^i$ for the limit of the finite gluings $\sum_{i = 1}^k x^i$ when it exists.


A subshift \emph{allows zero-gluing} if there exists $r$ such that when $P$ and $Q$ are $r$-padded patterns that occur in the subshift, and their zero-gluing exists, then the zero-gluing occurs in the subshift. Note that every SFT allows zero-gluing. Not every strongly irreducible subshift allows zero-gluing: On $\Z$, the \emph{even shift} \cite{LiMa95} is such an example, and in higher dimensions we can take the subshift where every row independently contains points from the even shift.


\begin{remark}
The definition of zero-gluing given here differs from that in earlier versions of the paper, where we instead defined zero-gluing as the weaker notion of being able to glue two configurations together as long as their supports are sufficiently separated. Zero-gluing in its weaker form is not equivalent, and the proof of Proposition~\ref{prop:RowsparseZeroGluing} uses the strong notion given above.
\end{remark}


The \emph{essential rowsparseness} of a two-dimensional configuration $x$ is the minimal $m$ such that for some fixed $r$ the support $S \subset \Z$ of every row of $x$ is contained in the union of $m$ many $r$-balls, that is, $\exists n_1, \ldots, n_m : S \subset \bigcup_i n_i + B_r(\bol 0)$. 


\begin{definition}
\label{def:Blob}
A configuration $x \in (\Sigma_+ \cup \{0\})^{\Z^d}$ is a \emph{\froctal{}} if it is nontrivial and there is an increasing sequence $r_1, r_2, r_3, \ldots$ of natural numbers and for all $r_i$ a finite set $B_i$ of finite $r_i$-\blabs{} such that for all $i$,
\begin{itemize}
\item each \blab{} in $B_{i+1}$ is obtained by zero-gluing translates of \blabs{} in $B_i$ and adding the $r_{i+1}$-padding,
\item each \blab{} in $B_{i+1}$ contains a translate of every \blab{} in $B_i$, 
\item each \blab{} in $B_{i+1}$ contains at least two translates of \blabs{} in $B_i$ with disjoint supports, and
\item $x = \lim_{j \rightarrow \infty} \sigma^{\bol v_j}(b_j|_{\Z^d})$ for some sequences $\bol v_j \in \Z^d$, $b_j \in B_j$. 
\end{itemize}
\end{definition}



\section{Almost minimality and traces}


A subshift is minimal if it is the orbit closure of every point that it contains. Because trace sparse subshifts (see Definition~\ref{def:Sparse}) always contain the all-zero point by compactness, they clearly cannot be minimal (unless trivial). The next best thing is almost minimality -- every point except the all-zero point generates the subshift in its orbit closure. It turns out that we can generally extract almost minimal subshifts from our subshifts of interest. 



\begin{definition}
\label{def:AlmoMin}
Let $X$ be a zero-dimensional compact metric space and let $G$ be a finitely-generated discrete group acting on $X$ by a continuous map $\sigma : G \times X \to X$. Then $(X,G,\sigma)$ is called a \emph{$G$-system}. The \emph{subsystem poset} $\mathcal{S}(X)$ of $X$ is the poset whose elements are subsystems of $X$ with order $Y \leq Z \iff Y \subset Z$. We say $X$ is \emph{minimal} if it is an atom of this poset, that is, $\mathcal{S}(X) = \{\emptyset, X\}$, and \emph{almost minimal} if $\mathcal{S}(X) = \{\emptyset, Y, X\}$ for some one-point system $Y = \{y\} \neq X$.
\end{definition}


\begin{remark}
Note that in our definition a trivial system is not almost minimal.
Almost minimality is discussed in \cite{Da01}.  We called systems with finite $\mathcal{S}(X)$ \emph{quasiminimal} in \cite{Sa14b}. Systems with a unique minimal subsystem are called \emph{essentially minimal} \cite{HePuSk92}. Clearly almost minimal subshifts are essentially minimal and quasiminimal, but neither essential minimality nor quasiminimality implies almost minimality by easy examples, and essential minimality and quasiminimality are orthogonal concepts. 
\end{remark}


\begin{definition}
\label{def:Sparse}
A subshift $X \subset \Sigma^{\Z^d}$ is \emph{$k$-sparse} if every configuration $x \in X$ satisfies
\[ |\{\bol v \in \Z^d \;|\; x_{\bol v} \neq 0\}| \leq k. \] 
A subshift is \emph{sparse} if it is $k$-sparse for some $k \in \N$. A $\Z^d$-subshift is \emph{(nonuniformly) trace sparse} if its trace is sparse, and \emph{uniformly trace sparse} if its trace is $k$-sparse for some $k$. We say that a $\Z^d$-subshift is \emph{trace countable} if its trace is a countable subshift.
\end{definition}

A (trace) countable subshift obviously need not be (trace) sparse. A (trace) sparse subshift need not be uniformly (trace) sparse:

\begin{example}
\label{ex:NonuniformlyRowsparse}
There is a sparse $\Z$-subshift that is not uniformly sparse: Let $x^n \in \{0,1\}^\Z$ be the point where $x^n_i = 1 \iff i \in n\Z \cap [0,n^2]$. Then $\OC{\{x^n \;|\; n \in \N\}}$ is sparse, but not uniformly sparse. \qee
\end{example}

In the next sections, we give some technical tools that are helpful when applying the main theorem outside the almost minimal trace sparse setting-- namely we give sufficient conditions that allow us to extract almost minimal trace sparse subshifts from a multidimensional subshift. In Section~\ref{sec:Extracting}, we give sufficient conditions for the extractability of an almost minimal subsystem. In Section~\ref{sec:Rowsparsing}, we show that among almost minimal subshifts, trace sparseness is implied by various weaker properties. These conditions are somewhat different, and their intersection gives the following lemma. A configuration $x \in \Sigma^{\Z}$ is \emph{eventually zero to the right} if $x_i = 0$ for all large enough $i \in \N$.

\begin{lemma}
\label{lem:Extract}
Let a subshift $X \subset \Sigma^{\Z^2}$ with trace $Y$ satisfy one of the following:
\begin{itemize}
\item $Y$ is sparse,
\item $Y$ is countable and the only periodic point in $Y$ is $0^\Z$, or
\item every point of $Y$ is eventually zero to the right.
\end{itemize}
Then $X$ contains an almost minimal uniformly trace sparse subsystem.
\end{lemma}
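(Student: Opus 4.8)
The plan is to deduce the lemma by combining two statements, each of which I would establish in the two following sections: under each of the three hypotheses, $X$ contains an almost minimal subsystem (the extraction results of Section~\ref{sec:Extracting}), and every almost minimal subshift whose projective subdynamics satisfies one of the three hypotheses is uniformly rowsparse (Section~\ref{sec:Rowsparsing}). All three conditions on $Y$ -- being sparse, being countable with $0^\Z$ as its only periodic point, and having every point eventually zero to the right -- pass to subshifts of $Y$, and the projective subdynamics of a subsystem of $X$ is a subshift of $Y$; so whichever subsystem the first statement produces, the second one applies to it and concludes the proof. (We may assume $X\neq\{0^{\Z^2}\}$, the trivial case being immediate.)

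For the extraction I would first show that $\{0^{\Z^2}\}$ is the unique minimal subsystem of $X$. Since $X$ has at least one minimal subsystem (Zorn's lemma plus the finite intersection property for chains of nonempty subsystems), it suffices to check that every minimal subshift $M\subseteq X$ equals $\{0^{\Z^2}\}$. If $Y$ is sparse or every point of $Y$ is eventually zero to the right, I would argue directly that $\sigma^{(m,0)}(x)\to 0^{\Z^2}$ as $m\to\infty$ for every $x\in X$: a fixed finite window meets only finitely many rows of $x$, each of which has, in $Y$, either finite support or a support that is eventually zero to the right, so a large horizontal translation empties the window; hence $0^{\Z^2}\in\overline{\orb(x)}$ for every $x$, which immediately forces the claim. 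If instead $Y$ is countable with $0^\Z$ its only periodic point, I would use that every nonempty countable subshift contains a periodic point; applied to the countable projective subdynamics of $M$ this places $0^\Z$ among its rows, and then a Cantor--Bendixson argument on that projective subdynamics propagates a zero row to the all-zero configuration, again forcing $M=\{0^{\Z^2}\}$.

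Knowing that $\{0^{\Z^2}\}$ is the unique minimal subsystem, every nonempty subsystem of $X$ contains $0^{\Z^2}$, and I would use Zorn's lemma to find a subsystem $X'$ that is minimal under inclusion among the nonempty subsystems of $X$ other than $\{0^{\Z^2}\}$. The only delicate point is that a chain $(Z_i)$ of such subsystems has a lower bound of the same kind: its intersection $Z_\infty$ is a nonempty subsystem by the finite intersection property, and it is not $\{0^{\Z^2}\}$ because the sets $\{x\in Z_i:x_{(0,0)}\neq 0\}$ are nonempty (translate a nonzero point of $Z_i$ so that a nonzero symbol sits at the origin) and nested, so have nonempty intersection, which is contained in $Z_\infty$. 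Such an $X'$ is then almost minimal: a proper nonempty subsystem of $X'$ is a nonempty subsystem of $X$, hence is either $\{0^{\Z^2}\}$ or a strictly smaller member of the family, and the latter contradicts minimality of $X'$.

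The substantive step, and the one I expect to be the main obstacle, is the uniform-rowsparseness claim; almost minimality is genuinely used, since Example~\ref{ex:NonuniformlyRowsparse} gives a sparse non-uniformly-sparse subshift, and that subshift fails almost minimality for precisely the reason the lemma needs. The key observation is that in an almost minimal subshift, any finite pattern containing a nonzero symbol that occurs anywhere in $X$ occurs in every nonzero point: if $z\neq 0^{\Z^2}$ then $\overline{\orb(z)}=X$, so $z$ is approximated by translates of any configuration exhibiting that pattern and hence contains it. If $Y$ were not uniformly sparse, then for every $k$ there is a one-row pattern occurring in $X$ with more than $k$ nonzero symbols, so every nonzero point of $X$ would contain all of these patterns; a compactness argument producing a nonzero configuration of $X$ whose support is confined to boundedly many rows -- ideally a single row after a further reduction -- would then force one row to carry infinitely many nonzero symbols, contradicting sparseness of $Y$. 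The countable and eventually-zero-to-the-right cases go the same way, those properties being inherited and interacting with minimality to cap the number of nonzero symbols per row. I expect the fiddly part to be ruling out that nonzero symbols escape to infinity when passing to these limits, so that the limiting configuration really does have an infinitely supported row.
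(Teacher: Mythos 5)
Your high-level decomposition matches the paper's (extract an almost minimal subsystem, then show that an almost minimal subshift satisfying any of the three hypotheses is uniformly rowsparse), and your extraction argument in the sparse and eventually-zero-to-the-right cases -- horizontal translates converge to $0^{\Z^2}$, so the unique minimal subsystem is trivial, then Zorn with a chain-intersection argument -- is essentially the paper's and is fine. The problem is the step you yourself call substantive. From ``for every $k$ some one-row pattern with more than $k$ nonzero symbols occurs in $X$, hence in every nonzero point,'' no compactness argument produces a configuration with an infinitely supported row: those patterns have unbounded width, their nonzero symbols drift apart, and in any limit only finitely many survive in a fixed window. This is precisely the behaviour of Example~\ref{ex:NonuniformlyRowsparse}, and the observation that every pattern of $X$ occurs in every nonzero point does not by itself prevent it; the ``escape to infinity'' you defer as the fiddly part is the entire content of the statement. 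The paper's mechanism is different in kind: in the eventually-zero-to-the-right case (which subsumes the sparse case) it produces a word $w$ of the projective subdynamics to the right of which only $0$ can occur, and in the countable case an isolating word of an isolated point of $Y$; almost minimality forces such a word to occur within bounded distance of every nonzero symbol, and it is this syndetic occurrence of a ``terminal/isolating'' word that caps the number of nonzero symbols per row (Lemma~\ref{lem:EventuallyZeroRight} and the lemma preceding it). Your sketch has no analogue of that idea, so this is a genuine gap, not bookkeeping.

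There is also a smaller gap in the countable branch of the extraction step. Placing $0^\Z$ among the rows of a minimal subsystem $M$ is correct (a nonempty countable subshift contains a periodic point), but an all-zero row does not by itself ``propagate'' to the all-zero configuration: a nontrivial minimal $\Z^2$-subshift can contain points with an all-zero row, so the hypothesis must be used again, and invoking ``a Cantor--Bendixson argument'' is not yet an argument. The paper closes this by applying the same reasoning to the $k$-row projective subdynamics for every $k$ (it is countable, and its only periodic point is $(0^\Z)^k$), which gives arbitrarily large blocks of zeroes in every point of $X$; a nontrivial minimal subsystem would force a nonzero symbol to occur syndetically, a contradiction, so $\{0^{\Z^2}\}$ is the unique minimal subsystem (Lemma~\ref{lem:RowsparseContainsAlmostMinimal}). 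You need to supply this multi-row step or an equivalent.
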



\begin{proof}
If $Y$ is sparse, apply any of the four items of Lemma~\ref{lem:RowsparseContainsAlmostMinimal} to obtain an almost minimal subsystem $Z \subset X$, and then apply Lemma~\ref{lem:IsolPeriodic} or Lemma~\ref{lem:EventuallyZeroRight} to conclude $Z$ is uniformly trace sparse.

If $Y$ is countable and the only periodic point is $0^\Z$, apply item 1 of Lemma~\ref{lem:RowsparseContainsAlmostMinimal} to obtain an almost minimal subsystem $Z \subset X$, and then apply Lemma~\ref{lem:IsolPeriodic} to conclude that $Z$ is uniformly trace sparse. For this, observe that the only periodic point $0^\Z$ cannot be isolated in the trace of $Z$ as $0^{\Z^2}$ is in the orbit closure of every point of $Z$.

If every point of $Y$ is eventually zero to the right, apply item 2 or item 4 of Lemma~\ref{lem:RowsparseContainsAlmostMinimal} and then Lemma~\ref{lem:EventuallyZeroRight}.
\end{proof}

\subsection{Extracting an almost minimal subsystem}
\label{sec:Extracting}


\begin{lemma}
\label{lem:EssentiallyImpliesAlmost}
Let $X$ be an essentially minimal $\Z^d$-subshift containing the point $0^{\Z^d}$. Then $X$ contains an almost minimal subsystem.
\end{lemma}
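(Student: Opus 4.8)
The plan is a Zorn's lemma argument: locate a minimal element among the nontrivial subsystems of $X$, and then check that such an element is almost minimal. First I would extract from the hypothesis the one fact that gets used. Since $X$ is compact, every nonempty subsystem of $X$ contains a minimal subsystem (the standard compactness argument: along a chain of nonempty closed invariant sets, the intersection is again nonempty by the finite intersection property, closed, and invariant), and $\{0^{\Z^d}\}$ is itself a minimal subsystem; by essential minimality it is therefore \emph{the} minimal subsystem of $X$. Hence every nonempty subsystem of $X$ contains the point $0^{\Z^d}$.

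Next, let $\mathcal{C}$ be the family of subsystems $Z \subseteq X$ with $Z \neq \{0^{\Z^d}\}$ — equivalently, those containing a nonzero point — ordered by inclusion; it is nonempty, as $X \in \mathcal{C}$ (we may assume $X$ itself is nontrivial, i.e. contains a nonzero point). I claim every chain $(Z_\alpha)$ in $\mathcal{C}$ has a lower bound in $\mathcal{C}$, namely $Z_\ast = \bigcap_\alpha Z_\alpha$. This is a closed shift-invariant set, hence a subsystem, and it lies below every $Z_\alpha$; the only thing needing proof is $Z_\ast \neq \{0^{\Z^d}\}$. This is the one genuinely non-formal step, and it is handled by compactness together with the fact that $X$ is a subshift: the cylinder $C = \{x \in X : x_0 \neq 0\}$ is clopen since $\Sigma$ is finite, each $Z_\alpha \cap C$ is nonempty (shift a nonzero point of $Z_\alpha$ so that a nonzero symbol lands at the origin) and closed, and along a chain the sets $Z_\alpha \cap C$ are totally ordered by inclusion, hence have the finite intersection property; compactness of $X$ then produces a point $y \in \bigcap_\alpha (Z_\alpha \cap C) \subseteq Z_\ast$, which is nonzero, so $Z_\ast \in \mathcal{C}$. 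By Zorn's lemma $\mathcal{C}$ has a minimal element $Z_0$.

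Finally I would verify that $Z_0$ is almost minimal, i.e. $\mathcal{S}(Z_0) = \{\emptyset, \{0^{\Z^d}\}, Z_0\}$. If $W$ is a nonempty subsystem of $Z_0$, then $W$ contains $0^{\Z^d}$ by the first paragraph, so either $W = \{0^{\Z^d}\}$ or $W \in \mathcal{C}$; in the latter case $W \subseteq Z_0$ and minimality of $Z_0$ in $\mathcal{C}$ force $W = Z_0$. Since $Z_0 \in \mathcal{C}$ we have $Z_0 \neq \{0^{\Z^d}\}$, so $\emptyset$, $\{0^{\Z^d}\}$ and $Z_0$ are distinct and exhaust $\mathcal{S}(Z_0)$, which is exactly the definition of an essentially almost minimal subsystem. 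The main obstacle is precisely the compactness step guaranteeing that a descending chain of nontrivial subsystems cannot collapse onto the fixed point $0^{\Z^d}$; the rest is routine order theory, and the subshift structure enters only there, through the clopen set $C$ that witnesses nonzeroness uniformly.
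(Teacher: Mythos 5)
Your proposal is correct and follows essentially the same route as the paper: a Zorn's lemma argument over the nontrivial subsystems, with compactness and a clopen cylinder guaranteeing that the intersection along a chain still contains a nonzero point. The only cosmetic difference is that you use the set $\{x : x_0 \neq 0\}$ directly where the paper first fixes a single symbol $a$ occurring in all members of the chain (by pigeonhole) and intersects with $[a]$, and you spell out the final verification that the Zorn-minimal element is almost minimal, which the paper leaves implicit.
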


\begin{proof}
Since $\{0^{\Z^d}\}$ is minimal, it is the unique minimal subsystem. 
If $(X_i)_{i \in I}$ is a decreasing net of subshifts containing a nonzero symbol, then by the pigeonhole principle some nonzero symbol $a \in \Sigma_+$ occurs in these subshifts for arbitrarily large $i \in I$, and thus for all $i \in I$. The intersection $C_i = X_i \cap [a]$ is a nonempty compact set for all $i$, and $(C_i)_{i \in I}$ is then a decreasing net of nonempty closed sets. By compactness there exists $x \in \bigcap_i C_i \subset X_i$, so also the intersection contains a nonzero symbol. The result follows from Zorn's lemma.
\end{proof}

\begin{lemma}
\label{lem:RowsparseContainsAlmostMinimal}
Let a subshift $X \subset \Sigma^{\Z^d}$ with trace $Y$ satisfy one of the following:
\begin{itemize}
\item $Y$ is countable and the only periodic point in $Y$ is $0^\Z$,
\item every point of $Y$ is eventually zero to the right
\item the upper Banach density of nonzero symbols is $0$ in every configuration of $Y$, or
\item every point of $X$ contains arbitrarily large balls of zeroes.
\end{itemize}
Then $X$ contains an almost minimal subsystem.
\end{lemma}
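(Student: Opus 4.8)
The strategy is to reduce all four hypotheses to the situation of the previous lemma, which asserts that an essentially minimal $\Z^d$-subshift containing $\{0^{\Z^d}\}$ contains an almost minimal subsystem. So the plan is: (1) first exhibit an essentially minimal subsystem $X' \subseteq X$ still containing $0^{\Z^d}$, under each of the four hypotheses, and then (2) apply the earlier lemma. Since each hypothesis is closed under passing to subsystems and under taking projective subdynamics of subsystems, the content is entirely in step (1): producing, inside $X$, a subshift whose unique minimal subsystem is $\{0^{\Z^d}\}$.

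For step (1) the uniform idea is this: a subshift $X$ is essentially minimal with unique minimal subsystem $\{0^{\Z^d}\}$ precisely when every minimal subsystem of $X$ equals $\{0^{\Z^d}\}$, equivalently when $\{0^{\Z^d}\}$ is the only minimal subshift contained in $X$. By Zorn's lemma every subshift contains a minimal subsystem, and a minimal subshift other than $\{0^{\Z^d}\}$ consists entirely of uniformly recurrent nonzero points. So it suffices to show that under each hypothesis we can pass to a subsystem $X' \subseteq X$ (still containing $0^{\Z^d}$, which is automatic since it is a fixed point of the original system and survives passing to closure of any orbit hitting it — or more carefully, we intersect with the cylinder forcing a zero somewhere) in which no nonzero point is uniformly recurrent. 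I would handle the cases as follows. If $Y$ (the projective subdynamics) is countable, then a classical fact about countable subshifts is that the only minimal subsystems are finite, hence consist of periodic points; since the only periodic point of $Y$ is $0^\Z$, the only minimal subsystem of $Y$ is $\{0^\Z\}$, and this pushes up to $X$: any minimal subsystem of $X$ has countable, hence finite-orbit (periodic) projective subdynamics, forcing it to be $\{0^{\Z^d}\}$. If every point of $Y$ is eventually zero to the right, then a nonzero uniformly recurrent point of $Y$ would have to have its rightmost "nonzero tail pattern" recur infinitely often to the left, but eventual-zeroness to the right means there is a rightmost block of nonzeros; uniform recurrence then forces infinitely many translated copies, contradicting eventual zeroness — so again no nonzero minimal subsystem. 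If the upper Banach density of nonzeros is $0$ in every configuration of $Y$, then a nonzero uniformly recurrent point would have some nonzero pattern appearing syndetically, which forces positive lower density of nonzeros, a contradiction. The last case, every point of $X$ contains arbitrarily large balls of zeroes, is essentially the $\Z^d$-version of the same argument: a nonzero uniformly recurrent point has a nonzero pattern appearing with bounded gaps, so it cannot contain arbitrarily large zero balls.

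Thus in every case $X$ itself (or, if one prefers to be careful about $X$ possibly being minimal and nonzero — which the hypotheses forbid anyway — the subshift $X \cap$ (cylinder of a zero at the origin)) is essentially minimal with unique minimal subsystem $\{0^{\Z^d}\}$, and the previous lemma finishes the proof. The main obstacle, and the one place requiring actual care rather than a one-line remark, is the countable case: one must invoke (or reprove) the fact that a countable subshift has only finite minimal subsystems — this follows from the Cantor–Bendixson analysis of countable compact metric spaces (an isolated point argument: a countable subshift has an isolated point, hence by minimality a finite orbit) — and then argue that minimal subsystems of the two-dimensional $X$ inherit countable projective subdynamics so the same conclusion applies one dimension up. The other three cases are short contradiction arguments from the definition of uniform recurrence.
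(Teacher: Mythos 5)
Your skeleton matches the paper's: show that under each hypothesis the only minimal subsystem of $X$ is $\{0^{\Z^d}\}$, then invoke the preceding lemma (Zorn plus compactness). Case four is handled correctly. The trouble is in how you transfer hypotheses about the projective subdynamics $Y$ to minimal subsystems of the $d$-dimensional system $X$. In cases two and three you argue about ``a nonzero uniformly recurrent point of $Y$'', but a nonzero minimal subsystem $Z \subset X$ only provides points that are uniformly recurrent for the $\Z^d$-action; their individual rows lie in $Y$ but need not be uniformly recurrent as $\Z$-points (a row can even be all zero), so your contradiction is aimed at the wrong object. These two cases are repairable by a short pigeonhole that you should state: if a nonzero symbol occurs $R$-syndetically in $z \in Z$, then the union of the supports of the finitely many rows of $z$ at heights in $[-R,R]^{d-1}$ is $R$-syndetic in $\Z$, so one of these rows --- a point of $Y$ --- is not eventually zero to the right (case two), respectively has positive upper Banach density of nonzero symbols (case three).

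Case one is a genuine gap. The inference ``any minimal subsystem of $X$ has countable, hence finite-orbit (periodic) projective subdynamics'' fails twice over: a countable subshift need not be finite (the sunny-side-up is a counterexample), and the projective subdynamics of a minimal $\Z^d$-system need not be minimal (different rows of a minimal system can generate different orbit closures), so the fact ``countable minimal $\Rightarrow$ finite'' does not apply to it. The paper instead shows that condition one implies condition four: countability plus ``only $0^\Z$ is periodic'' first gives that every point of $Y$ has $0^\Z$ in its orbit closure, and then --- this is the step your proposal has no substitute for --- the same reasoning is applied to $k$-tuples of consecutive rows, viewed inside the countable system $Y^k$ whose only periodic point under the diagonal shift is $(0^\Z)^k$, yielding arbitrarily wide simultaneous zero blocks across $k$ rows, i.e.\ arbitrarily large balls of zeroes in every point of $X$, which a nonzero minimal subsystem cannot tolerate. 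Working row by row, as you do, cannot produce the needed simultaneity; you must either reproduce this stacked-rows argument or an equivalent one (for instance, showing via invariant measures that condition one forces condition three) before your reduction goes through.
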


\begin{proof}
Let us show that the first condition implies the fourth (so that each of the first three conditions implies the last one). Every subsystem of $Y$ is countable, and thus contains a minimal subsystem that must be countable. Applying this to orbit closures of points, it follows that every point has $0^{\Z}$ in its orbit closure, and thus contains arbitrarily long words over $0$. Since $Y^k$ is countable for all $k$, it also has only the periodic point $(0^{\Z})^k$, and we obtain the fourth condition.

If $Z \subset X$ is minimal, and contains a configuration with a nonzero symbol, then that symbol appears with bounded gaps (in a syndetic subset of coordinates) in every configuration of $Z$. This clearly contradicts the last condition. It follows that if any of the four conditions holds, the only minimal subsystem of $X$ is $\{0^{\Z^d}\}$, so we can apply the previous lemma.
\end{proof}

One can strengthen this lemma to saying that every nontrivial subsystem of $X$ with one of these properties has an almost minimal subsystem (though it may not be the same almost minimal subsystem -- consider any union of two almost minimal subsystems sharing the fixed point).


One can ask, then, to what extend this stronger version has a converse, that is, if every subsystem of $X$ has an almost minimal subsystem, which of the conditions does it satisfy? The fourth condition is clearly also necessary: if there is a configuration not containing a ball of zeroes of radius $r$, then its orbit closure cannot be almost minimal (with fixed point $0^{\Z^2}$). The first two conditions are not necessary:

\begin{example}
There exists a subshift $X \subset \Sigma^{\Z^2}$ which is almost minimal and whose trace is $\{0^\Z, 1^\Z\}$. Namely, take any one-dimensional almost minimal subshift $Y$ (for example, a subshift generated by a finite point) and consider the two-dimensional subshift with constant rows, and columns taken from $Y$. \qee
\end{example}

Slightly more interestingly, one can also build almost minimal subshifts where the upper Banach density of nonzero symbols is positive in every nonzero point of the trace (see Section~\ref{sec:Examples}).

The first two conditions of Lemma~\ref{lem:RowsparseContainsAlmostMinimal} are not comparable for general subshifts: The subshift that is the orbit closure of the characteristic function of $\{\pm 2^{i} \;|\; i \in \N\}$ is countable, has only the periodic point $0^\Z$, and contains points that are not eventually zero in either direction. Conversely, for every $y \in \{0,1\}^\N$ construct the point $x^y \in \{0,1\}^\Z$ with support $\{-2^{n+1} + y_n \;|\; n \in \N\}$. The set of these points generates an uncountable subshift where every point is eventually zero to the right.

\subsection{Trace sparseness of almost minimal subshifts}
\label{sec:Rowsparsing}

\begin{lemma}
\label{lem:IsolPeriodic}
Let $X$ be an almost minimal $\Z^2$-subshift with countable trace $Y$ such that $Y$ has no isolated periodic points. 
Then $X$ is uniformly trace sparse.
\end{lemma}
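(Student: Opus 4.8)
The plan is to single out one isolated point of $Y$ and use its occurrences as ``beacons'' that pin down every nonzero symbol of every point of $X$. First I would observe that, since $Y$ is countable and nonempty (it contains $0^\Z$), it has an isolated point $y$; by the hypothesis this $y$ is not periodic. Since $0^\Z$ is periodic we get $y\neq 0^\Z$, so $y$ has a nonzero symbol, and there is a word $w$ with $[w]\cap Y=\{y\}$; note $w$ cannot be all-zero, since otherwise $0^\Z\in[w]\cap Y$ would force $y=0^\Z$. The two elementary facts I would extract are: (i) any row of any point of $X$ in which $w$ occurs must be a shift of $y$, because such a row lies in $Y$ and a suitable translate of it lies in $[w]\cap Y=\{y\}$; and (ii) $w$ occurs \emph{exactly once} in $y$, for if it occurred at two positions their difference would be a nonzero period of $y$, contradicting aperiodicity. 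Together, (i) and (ii) say that in any point of $X$, any block of $2R+1$ consecutive rows contains at most $2R+1$ occurrences of $w$ — at most one per row that happens to be a translate of $y$.

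Next I would prove, by a compactness argument, that there is a radius $R$ such that in every $x\in X$ every nonzero cell lies within distance $R$ of an occurrence of $w$. The word $w$ occurs in $X$ (on the row $y$ of some point of $X$). If the claim failed, then for each $n$ there would be a point of $X$ with a nonzero cell having no occurrence of $w$ within distance $n$; translating that cell to the origin and passing to a subsequential limit (using finiteness of the alphabet) would yield $x_\infty\in X$ with a nonzero symbol at the origin but no occurrence of $w$ anywhere. Since $x_\infty\neq 0^{\Z^2}$, almost minimality gives $\overline{\orb(x_\infty)}=X$, hence $w$ must occur in $x_\infty$ — a contradiction. This is the step where almost minimality is used essentially.

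Finally I would combine the two ingredients. Fix $x\in X$ and a row index $j$: each nonzero cell on row $j$ is within distance $R$ of some occurrence of $w$ sitting in the strip of rows $[j-R,j+R]$, and by the first step that strip contains at most $2R+1$ occurrences of $w$; moreover a single such occurrence can be the near neighbour of at most $2(R+|w|)+1$ cells on row $j$. Hence row $j$ carries at most $(2R+1)\bigl(2(R+|w|)+1\bigr)$ nonzero symbols, a bound independent of $x$ and of $j$, so $X$ is uniformly rowsparse. I expect the main obstacle to be the middle step — extracting the \emph{uniform} radius $R$ — for which one really needs that every nonzero point of $X$ generates $X$; the no-isolated-periodic-points hypothesis enters only through fact (ii), which is exactly what prevents the beacon $w$ from being diluted into a repeating pattern.
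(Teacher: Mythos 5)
Your proof is correct and takes essentially the same route as the paper: an isolating word $w$ of an isolated (hence, by hypothesis, aperiodic) point $y\in Y$ serves as a beacon, almost minimality forces $w$ to occur within a uniformly bounded distance of every nonzero symbol, and aperiodicity of $y$ ensures at most one occurrence of $w$ per row. The only difference is in the final step: the paper reaches a contradiction from an infinite row via the pigeonhole principle, while you count occurrences directly, which has the minor advantage of making the uniform bound $(2R+1)\bigl(2(R+|w|)+1\bigr)$ explicit.
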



\begin{proof}
Let $Y$ be the trace of $X$. Because $Y$ is countable, it is not perfect, and thus contains an isolated point $y$ with isolating pattern $w$ which appears in a point of $X$. By the assumption, $w$ is not all zero, since $0^\Z$ is a non-isolated periodic point in $Y$. Identify $w$ with the corresponding two-dimensional pattern of shape $|w|$-by-$1$. By almost minimality, there is a bound $C > 0$ such that whenever a point $x \in X$ has a nonzero symbol $x_{(i,j)} \neq 0$, $w$ appears at $x_{(i',j')} \in \Z$ for some $d((i',j'), (i, j)) < C$. this forces the row $x_{j'}$ to be a translate of $y$.

If $z \in Y$ has a support of at least size $(2C+1)^2$, and $z$ is a row in $x$, then at a bounded distance of each of the many nonzero symbols of $z$ in $x$ the isolating pattern $w$ appears. In particular by the pigeonhole principle, two occurrences of $w$ are on the same row, and thus $y$ must be periodic, since we find two occurrences of the isolating pattern $w$ in it, at a nonzero offset. But then $y$ is an isolated periodic point, which contradicts the assumption. Thus $Y$ must be uniformly sparse.
\end{proof}




\begin{lemma}
\label{lem:EventuallyZeroRight}
Let $X$ be an almost minimal subshift with trace $Y$ such that every point of $Y$ is eventually zero to the right. Then $X$ is uniformly trace sparse.
\end{lemma}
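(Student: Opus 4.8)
The plan is to mimic the structure of the previous lemma (the countable-projective-subdynamics case) but using the "eventually zero to the right" hypothesis in place of an isolated pattern. The key observation is that among rows that are eventually zero to the right, there is a natural way to single out a useful pattern: look at the right-most nonzero symbol. More precisely, for a nonzero row $z \in Y$ that is eventually zero to the right, there is a well-defined last coordinate where a nonzero symbol occurs, and the symbol immediately to its right together with the fact that all further symbols are $0$ gives a one-sided condition. The point is that $Y$, being the projective subdynamics of $X$, is a subshift, and the "eventually zero to the right" property passes to $X$-rows.

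First I would argue that $X$ has only the trivial minimal subsystem: any minimal subsystem $Z \subseteq X$ with a nonzero symbol would have that symbol appearing syndetically in every row of every configuration of $Z$, contradicting "eventually zero to the right" (a row that is eventually zero to the right cannot have a nonzero symbol appearing with bounded gaps). Hence by almost minimality, every nonzero point of $X$ generates $X$. Next, I would fix some nonzero point $x_0 \in X$ and some nonzero row $z_0$ of $x_0$; since $z_0$ is eventually zero to the right, after shifting we may assume $z_0$ has a nonzero symbol at $0$ and $(z_0)_i = 0$ for all $i > 0$. Then the pattern $w$ consisting of this nonzero symbol at position $0$ followed by a long block of zeroes to the right is a pattern of $X$, and by almost minimality $w$ occurs within a bounded distance $R$ of every nonzero cell of every configuration of $X$ (since $x_0$ generates $X$ and $w \sqsubset x_0$).

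The crux is then a pigeonhole argument bounding the number of nonzero symbols per row. Suppose some row $z$ of some $x \in X$ has more than $k$ nonzero symbols, for $k$ to be chosen. Take $k+1$ of them, say at coordinates $p_0 < p_1 < \cdots < p_k$. For the right-most one $p_k$: since $w$ occurs within distance $R$ of $p_k$, and $w$ ends in a long run of zeroes, there are many zeroes just to the right of $p_k$ — but this alone does not yet give a contradiction. The real leverage must come from iterating: because every nonzero cell forces a nearby occurrence of $w$, and $w$ forces a run of zeroes to its right, we get a structural constraint pushing nonzero symbols apart or bounding them. I expect the cleanest route is: since every row of $X$ is eventually zero to the right, and $X$ is shift-invariant, consider the leftmost occurrence — actually, the decisive move is that if rows could have arbitrarily many nonzero symbols, we could extract (by compactness, taking a limit of shifted configurations) a row of $X$ with infinitely many nonzero symbols accumulating, and in particular a row of $X$ that is \emph{not} eventually zero to the right, or one whose nonzero symbols are not eventually zero to the right after shifting — contradicting the hypothesis applied to the limit configuration, which still lies in $X$.

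The main obstacle I anticipate is making this compactness/extraction step yield precisely a contradiction with "eventually zero to the right": a limit of configurations each eventually zero to the right need not itself be eventually zero to the right (e.g. shifts of $w$ can converge to $0^\Z$, which is fine, but shifts could also converge to something with a nonzero symbol and infinitely many nonzero symbols to its right). So the argument must instead be: if row-supports are unboundedly large, then by almost minimality the forced occurrences of $w$ near each nonzero symbol, combined with the zero-run structure of $w$, let us locate a nonzero symbol with the property that to its right there is \emph{another} nonzero symbol arbitrarily far away yet within the same configuration's single row — and by taking a limit centered appropriately we obtain a point of $X$ with a row having a nonzero symbol at the origin and nonzero symbols at arbitrarily large positive coordinates, i.e. a row not eventually zero to the right. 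This contradicts the hypothesis (which applies to \emph{every} configuration of $X$), so the row-supports are uniformly bounded, i.e. $X$ is uniformly rowsparse. I would write the pigeonhole bound explicitly in terms of $R$ and $|w|$ once the extraction scheme is pinned down.
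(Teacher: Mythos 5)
There is a genuine gap, and it sits exactly where you flagged it. Your pattern $w = a0^L$ (the rightmost nonzero symbol of one particular row followed by $L$ zeros) does not have the property the argument needs: ``being the rightmost nonzero symbol of its row'' is not a property of the finite word, so other occurrences of $a0^L$ may perfectly well be followed, after finitely many more zeros, by further nonzero symbols. Consequently the forced occurrences of $w$ near every nonzero cell give only $L$ zeros locally, and your compactness rescue fails for the very reason you identified: from configurations having, on one row, a nonzero symbol at the origin and another nonzero symbol at distance at least $m$, any limit point retains only the symbol at the origin (the far symbol escapes to infinity; compare two-cell configurations at distance $m$, which converge to a one-cell configuration), so no row failing to be eventually zero to the right is produced. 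Your ``the argument must instead be\dots'' sentence restates the desired conclusion without a mechanism that pins the far-away nonzero symbols at fixed coordinates; note also that extendability of $a0^L$ by a later nonzero symbol for every $L$ is perfectly consistent with every row being eventually zero to the right (consider the subshift of points with at most two nonzero symbols), so no contradiction can come from that kind of statement alone.

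The missing idea is the paper's first step: run the extension argument in the language of the projective subdynamics $Y$ (a subshift, since $X$ is compact), iterating on a single growing word rather than on a fixed-shape word. If every word of $Y$ containing a nonzero symbol could be extended on the right, after finitely many zeros, by a nonzero symbol, then iterating such extensions of one word and passing to a limit yields a point of $Y$ with infinitely many nonzero symbols to the right of a fixed position -- here the nonzero symbols sit at coordinates that never move, so the limit genuinely is not eventually zero to the right, contradicting the hypothesis. Hence (unless $Y$ has no nonzero word, in which case $X$ is trivial) there is a word $w \sqsubset Y$ containing a nonzero symbol to the right of which only $0$ can ever occur in any point of $Y$. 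This $w$ is what your (correct) almost-minimality step should be applied to: $w$ occurs within some distance $R$ of every nonzero cell of every configuration. The postponed pigeonhole then closes: if a row has nonzero cells at $p_1 < \dots < p_N$, each forces an occurrence of $w$ within distance $R$, lying on one of the $2R+1$ nearby rows; two occurrences attached to the same nearby row and to positions more than $2R+|w|$ apart are impossible, since the right-hand occurrence would lie entirely inside the all-zero region forced by the left-hand one while $w$ contains a nonzero symbol; this bounds $N$ by roughly $(2R+1)(2R+|w|+1)$, uniformly. So your proposal has the right second half but is missing the first half, which is the heart of the paper's proof.
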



\begin{proof}
If every word $w \sqsubset Y$ can be extended to the right by a word $0^ka$ with $a \neq 0$, then clearly $Y$ contains a configuration that is not eventually zero to the right. Thus, there is a word $w$ to the right of which only an infinite word of all zeroes can occur. Such a word occurs at a bounded distance from every nonzero symbol of every configuration of $X$, and we see as in the proof of the lemma above that $X$ is uniformly trace sparse.
\end{proof}

\section{Paths and path covers}
\label{sec:Paths}

\subsection{Paths}


Let $\Path_r$ be the space of all functions $p : \Z \to \Z$ such that $p(0) = 0$ and $|p(i+1)-p(i)|\leq r$ for all $i$, and $\Path = \bigcup_{r \in \N} \Path_r$. We give $\Path$ the product topology and the shift action $\tau : \Path \to \Path$ defined by $\tau(p)(i) = p(i+1) - p(1)$ (which is a homeomorphism). Let $\DPath_r$ be the space of all functions $p : \Z \to \Z$ with $p(\Z) \subset [-r, r]$ and $\DPath = \bigcup_{r \in \N} \DPath_r$. We give it the product topology and the usual shift action, also denoted $\sigma$. The two spaces are conjugate by the discrete derivative $\phi : \Path \to \DPath$ defined by $\phi(p)(i) = p(i+1) - p(i)$.

As a dynamical system, $\DPath$ is just a direct union of full shifts, and $\Path$ is then also conjugate to such a union. As the name implies, we think of an element of $\Path$ as a two-way infinite path in $\Z$, and $\DPath$ is the sequence of moves that it follows.

All of the following definitions will apply to both kinds of paths, in the sense that if we have defined a property P for elements of $\Path$ we will say a path $p \in \DPath$ has property P if $\phi^{-1}(p)$ has property P, and vice versa.

An \emph{ascending path} is a path $p \in \DPath$ such that for some $m \in \N$, $\sum_{i = j}^{j+m} p(i) > 0$ for all $j$, and symmetrically, we define \emph{descending paths}. A path $p \in \Path$ is \emph{bounded-to-one} if for some $m \in \N$, $|p^{-1}(n)| \leq m$ for all $n \in \Z$. A \emph{finite-to-one} path is a path $p \in \Path$ with $p^{-1}(n)$ finite for all $n \in \Z$, and otherwise $p$ is \emph{infinite-to-one}. A \emph{bounded path} is a path $p \in \Path$ with $p(\Z)$ contained in some $[-m, m]$. A set of paths $X \subset \DPath$ is \emph{uniformly ascending} (resp. descending, bounded-to-one, bounded) if every path in $X$ has the property, and the constant $m$ can be chosen independently of the path. In the case of ascending paths, we call this the \emph{ascension constant}.

Uniform recurrence and minimality, for paths and systems of paths, mean the usual dynamical notions, and in $\DPath$ have the same interpretations as the usual ones for subshifts. Note that an ascending path can be recurrent, since recurrence means recurrence of the sequence of moves, not the sequence of cells the path visits.

In the two-dimensional case, we define analogous systems $\Path^2$ and $\DPath^2$ of paths $p : \Z \to \Z^2$. For such paths, we define ascending and descending paths differently: to a path $p : \Z \to \Z^2$, we associate its \emph{height path} $p' : \Z \to \Z$ by forgetting the first coordinate of $p(n)$ for all $n$. An \emph{ascending path} $p : \Z \to \Z^2$ is one where the height path $p' : \Z \to \Z$ is ascending, and \emph{descending paths} are defined symmetrically.

A \emph{($\Sigma$-)colored path} is $p : \Z \to \Z \times \Sigma$ or $p : \Z \to \Z^2 \times \Sigma$ for a finite set of colors $\Sigma$. We write $\Path_\Sigma$ and $\Path^2_\Sigma$ for the spaces of $\Sigma$-colored path. All notions defined above generalize in an obvious way to colored paths (for geometric definitions we forget the colors, and for dynamical ones, we include them).

Especially in informal geometric explanations, we will also discuss finite subpaths of infinite paths, which need not go through the origin. These are simply functions $p : \{a,...,b\} \to \Z$ or $p : \{a, ..., b\} \to \Z^2$ for some $a, b \in \Z$.

We give a classification theorem for minimal path spaces.

\begin{theorem}
\label{thm:PathCharacterization}
Let $X \subset \Path_r$ be a minimal path space. Then exactly one of the following holds:
\begin{itemize}
\item every path in $X$ is uniformly ascending,
\item every path in $X$ is uniformly descending,
\item every path in $X$ is uniformly bounded, or
\item every path in $X$ is unbounded, and some path in $X$ is infinite-to-one.
\end{itemize}
\end{theorem}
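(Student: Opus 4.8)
The plan is to work with $\DPath_r$ instead of $\Path_r$ (they are conjugate), so we analyze the sequence of moves $p : \Z \to \{-r,\dots,r\}$. For $p \in \DPath_r$ and a finite window length $m$, consider the ``block sums'' $S_m^j(p) = \sum_{i=j}^{j+m-1} p(i)$. I would first observe that whether $\sup_j S_m^j(p)$ and $\inf_j S_m^j(p)$ are finite, and their signs, are properties that behave well under the shift and under taking limits in the product topology; in a minimal system $X$, for each fixed $m$ the quantities $\sup_{p \in X}\sup_j S_m^j(p)$ and the corresponding infimum are either uniformly bounded or unbounded \emph{simultaneously for all points}, because the orbit of any point is dense. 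The key dichotomy to extract is: either there is a uniform bound $M$ with $|S_m^j(p)| \le M$ for all $p\in X$, all $j$, and all $m$ (equivalently, $p(\Z)$ — i.e.\ the partial sums of $\phi^{-1}$ — stay in a bounded set), which is exactly the ``bounded path'' case; or no such uniform bound exists.

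In the unbounded case, the main step is to produce a clean sign condition. I would argue: if the paths are unbounded then for every $m$ there is some point and some window where $S_m^j$ is large in absolute value; by minimality this large block, say positive, must appear with bounded gaps in \emph{every} point of $X$. Then I would run a compactness/telescoping argument. Suppose some point $q \in X$ has its height partial sums returning to a bounded interval infinitely often in \emph{both} directions along a subsequence — one wants to show this forces either boundedness (contradiction) or that $q$ is infinite-to-one. Concretely: a path $p \in \Path_r$ that is unbounded is automatically finite-to-one is \emph{false} in general; so the real content is the last bullet: if every path is unbounded but some path is not infinite-to-one, i.e.\ some path $q$ is bounded-to-one (or at least has some fiber $q^{-1}(n)$ infinite is excluded), then I want to show every path is uniformly ascending or uniformly descending. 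For a bounded-to-one unbounded path, the partial sums must drift monotonically to $+\infty$ on one side and $-\infty$ on the other (or to $\pm\infty$ on both sides of the same sign); a minimality argument then transfers a uniform drift rate — i.e.\ a uniform $m$ and $\varepsilon$ with $S_m^j(p) \ge \varepsilon$ for all $j$, all $p\in X$ — to every point, giving uniform ascending (or, in the mirror case, uniform descending).

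The steps, in order: (1) reduce to $\DPath_r$ and set up block sums; (2) prove that in a minimal $X$, uniform two-sided boundedness of all block sums is all-or-nothing, yielding the ``all bounded'' case or the ``all unbounded'' case; (3) in the unbounded case, use minimality to show that if some point is infinite-to-one we are in the fourth case, and we are done; (4) otherwise every point is bounded-to-one, and I show each such point has monotone-to-infinity partial sums; (5) use a syndetic-occurrence/pigeonhole argument (in the style of the proofs in Section~\ref{sec:Rowsparsing}) to upgrade a one-point drift bound to a uniform drift bound $S_m^j(p)\ge\varepsilon>0$ over all $p\in X$ (or $\le -\varepsilon$), giving uniformly ascending (resp.\ descending); (6) check the four cases are mutually exclusive — bounded excludes unbounded trivially, and uniformly ascending excludes uniformly descending and excludes infinite-to-one since a uniform positive drift makes every fiber finite of bounded size.

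The main obstacle I expect is step (5): passing from ``this particular point drifts upward at a definite rate'' to ``every point drifts upward at a uniform rate''. The subtlety is that a priori different points could drift at different, non-uniform rates, or a point could have partial sums going to $+\infty$ that is not captured by any \emph{fixed} window length $m$ — e.g.\ $S_m^j$ could be positive on average but dip negative on short windows. Handling this requires choosing the window $m$ carefully (large enough to absorb the dips, which are controlled because the bounded-to-one condition caps how long the partial sums can stall) and then exploiting that the ``good block'' of positive sum recurs syndetically in every point by minimality, so the cumulative drift over any long interval is bounded below linearly with a slope depending only on $X$. A secondary delicate point is ruling out a mixed behavior where partial sums go to $+\infty$ on the right but stay bounded on the left; I would note that shift-invariance plus minimality makes the left-tail and right-tail behaviors coincide across the system, so ``unbounded and bounded-to-one'' indeed forces a genuine two-sided monotone drift of a single sign.
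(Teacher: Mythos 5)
Your overall skeleton (the all-or-nothing bounded/unbounded split via minimality and openness, the split on whether some path is infinite-to-one, and the mutual-exclusivity check) is fine, but there is a genuine gap at the heart of the argument, in your steps (4)--(5): the implication ``every path unbounded and no path infinite-to-one $\Rightarrow$ uniformly ascending or uniformly descending.'' First, ``no path in $X$ is infinite-to-one'' only says each individual path is finite-to-one; it gives no uniform bound over $X$, and the natural compactness upgrade fails because in a limit of recentered paths the witnessing returns can escape to infinity, so the limit path need not return at all. Second, even a uniform bounded-to-one bound does not ``cap how long the partial sums can stall'': a single excursion up by $N$ and back down by $N$ visits each height at most twice, yet produces a window of length $2N$ with zero net sum, so arbitrarily long non-positive windows are perfectly compatible with $2$-to-one paths. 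Third, syndetic recurrence of a fixed block of positive sum does not give a linear lower bound on drift, because the gains can be cancelled between occurrences (the periodic path ``up $10$, down $10$'' contains a positive block syndetically but is bounded). So none of the mechanisms you invoke yields the uniform $m$ and $\varepsilon$ with $S^j_m \geq \varepsilon$ that uniform ascent requires; what is true pointwise (finite-to-one plus bounded steps forces each tail of each path to diverge) is the easy part, and the uniformization is exactly where the content lies.

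The paper's proof supplies the missing device, a \emph{cut path}: a finite word $w$ of the language such that whenever $w$ occurs in a path, the path never re-enters the height band $[0,r-1]$ anchored at that occurrence, outside the occurrence. The dichotomy is taken on the existence of such a word, i.e.\ at the level of the language, not on fibers of individual paths. If a cut path exists, minimality makes it occur syndetically in every point, and a short sign analysis of what happens to the left and right of its occurrences forces every path to be uniformly ascending or uniformly descending (mixed behavior would trap some path between two occurrences, contradicting syndeticity). If no cut path exists, then \emph{every} finite word extends within the language to a longer word that re-enters the band; iterating these extensions and passing to a limit inside the closed set $X$ yields a single path entering $[0,r-1]$ infinitely often, hence by pigeonhole over the $r$ heights an infinite-to-one path---crucially, the returns are inserted at fixed finite positions, which is what circumvents the compactness failure described above. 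To repair your proposal you would need some such language-level certificate (or an equivalent); as written, steps (4)--(5) do not go through.
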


\begin{proof}
A word $w \sqsubset X$ is a \emph{cut path} if whenever $p \in X$, $p_{[0,|w|-1]} = w$, we have $p_j \notin [0,r-1]$ for all $j \notin [0,|w|-1]$. It is easy to verify that if $X$ has a cut path, then we have one of three cases: One possibility is that in every point, to the right of every cut path, the path stays above the origin, and stays below it on the left side, in which case by the syndeticity of $w$ (by minimality) every point is ascending. Symmetrically, if every path is above the origin on the left and below it on the right, every path is descending. In the remaining case, there is a cut path that forces the rest of the path to stay below or above it. This is easily seen to contradict minimality. 

Suppose then that $X$ does not contain a cut path. Then it is easy to construct a path that is infinite-to-one: start with an arbitrary finite path $w_1$. Since it is not a cut path, we can continue it to a longer path $w_2$ that re-enters $[0,r-1]$. Continuing inductively, in the limit we obtain a path that enters one of the coordinates in $[0,r-1]$ infinitely many times. Thus, we are in the third or the fourth case. What remains is to show that if not every path in $X$ is bounded, then every path in $X$ is unbounded. This holds since not being bounded by $m$ is an open condition and $X$ is minimal.
\end{proof}

Path spaces of the last type are not encountered in the proofs of our main results, but it is perhaps the most interesting case, and it splits into further subtypes. The present classification is enough for our purposes, but for the interested reader, we give some examples of the different possible types of class four minimal path spaces in Section~\ref{sec:Examples}.

We need the following generalization in the proof of Proposition~\ref{prop:DeterministicContainsSinglyPeriodic}. The proof is exactly the same.\footnote{Note that the possible finite sequences of moves still form a discrete set, even though paths themselves may explore a dense set of positions.}


\begin{theorem}
\label{thm:PathCharacterizationGeneral}
Let $R \subset [-r,r]$ be a finite set of real numbers and let $X \subset R^\Z$ be a minimal subshift. Define
\[ Y = \{ f : \Z \to \R \;|\; f(0) = 0 \wedge \exists x \in X: \forall i: f(i+1) = f(i) + x_n \}. \]
Then with the obvious definitions, exactly one of the following holds:
\begin{itemize}
\item every path in $Y$ is uniformly ascending,
\item every path in $Y$ is uniformly descending,
\item every path in $Y$ is uniformly bounded, or
\item every path in $Y$ is unbounded, and some path $p \in Y$ satisfies $p_i \in [0,r]$ for infinitely many $i$.
\end{itemize}
\end{theorem}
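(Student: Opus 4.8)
The plan is to transcribe the proof of Theorem~\ref{thm:PathCharacterization} almost word for word, replacing the integer window $[0,r-1]$ by the real interval $[0,r]$, the space $\Path_r$ by $Y$, and integer-step paths by paths whose steps come from the finite set $R$. Call a word $w$ in the language of $X$ a \emph{cut path} if, whenever $f \in Y$ has step sequence $x \in X$ (so $f(i+1)-f(i)=x_i$) with $x_{[n,\,n+|w|-1]}=w$ for some $n$, one has $f(j)\notin[f(n),\,f(n)+r]$ for every $j\notin[n,\,n+|w|-1]$; informally, reading $w$ forces the path to leave a fixed-width window around its current height on both sides and never return to it. The proof then splits on whether $X$ admits a cut path.

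If $X$ has a cut path $w$, then, $X$ being minimal, $w$ occurs syndetically in the step sequence of every $f\in Y$, and at each occurrence at position $n$ the path lies outside the window $[f(n),f(n)+r]$ on both sides. By the same purely combinatorial case analysis as in Theorem~\ref{thm:PathCharacterization} there are then just three possibilities: in every point the path exits every window upward on the right (and downward on the left), so the heights at successive occurrences increase while $f$ oscillates only boundedly between them and every path is uniformly ascending; the mirror image, giving every path uniformly descending; or some occurrence returns the path to the side of a window it came from, which, occurrences being syndetic, confines a tail of the path and hence, by minimality, forces every path in $Y$ to be bounded — the third alternative. None of this uses any property of $\Z$.

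If $X$ has no cut path, I would build, exactly as in the original proof, an $f\in Y$ with $f(i)\in[0,r]$ for infinitely many $i$: starting from any admissible finite word $w_1$, which is not a cut path, extend it to a longer admissible word $w_2$ whose path re-enters $[0,r]$, iterate, and pass to a limit point of $X$ in the compact space $R^\Z$. This is the one place the real-valued setting needs a word of care, and it is precisely the content of the footnote after the statement: although the heights $f(i)$ range over a dense subset of $\R$, the admissible finite words over the finite alphabet $R$ still form a discrete set, so the inductive choice of extensions goes through verbatim. Having produced such an $f$, we land in the third alternative if $f$ is bounded and in the fourth if it is not; the upgrade ``some path unbounded $\Rightarrow$ every path unbounded'' holds because, for each fixed $m$, being bounded by $m$ is a closed condition and $X$ is minimal. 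Mutual exclusivity of the four alternatives is elementary, since an ascending or descending path leaves $[0,r]$ permanently (its windowed increments lie in the finite set of partial sums of elements of $R$, hence are bounded away from $0$ in sign). I expect the limiting construction over $\R$-valued heights to be the only genuine — and quite mild — obstacle; everything else is a literal transcription of the integer argument.
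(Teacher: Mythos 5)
Your proposal is correct and takes essentially the same route as the paper: the paper's own proof of this statement is just the remark that the argument of Theorem~\ref{thm:PathCharacterization} carries over verbatim, together with the footnoted observation---which you reproduce---that finite move sequences over the finite alphabet $R$ remain a discrete (indeed finite, for each length) set even though the heights may be dense in $\R$. The only minor divergence is your treatment of the mixed cut-path case, which you resolve by concluding boundedness (via the sound ``one bounded path plus minimality forces all paths bounded'' upgrade) whereas the paper derives a contradiction there; both glosses sit at the same level of detail as the paper's ``it is easy to verify,'' so this does not constitute a gap.
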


\subsection{Path covers}
\label{sec:PathCovers}

The \emph{(acyclic) $r$-path cover} of a subshift $X \subset \Sigma^{\Z^d}$ is the subshift $\PC_r(X) \subset X \times (\{\#\} \cup [-r,r]^d)^{\Z^d}$ of configurations $(x,y)$ such that
\[ y_{\bol u} \neq \# \implies x_{\bol u} \neq 0 \wedge y_{\bol u + y_{\bol u}} \neq \# \]
for all $\bol u \in \Z^d$, such that in the directed graph $G(y)$ with nodes $\{\bol u \;|\; y_{\bol u} \neq \#\}$ and edges $\{(\bol u, \bol u + \bol v) \;|\; \bol v \neq \bol 0, y_{\bol u} = \bol v, y_{\bol u + \bol v} \neq \#\}$, every node has in-degree at most one (the out-degree is automatically also at most one), and there are no cycles. In $G(y)$, every node is part of a unique maximal (finite, one-way infinite or two-way infinite) path. If $(x,y) \in \PC_r(X)$ is such that $\bol 0 \in G(y)$, the path through the origin $\bol 0$ is two-way infinite and all nodes of $G(y)$ are on this path, then $(x,y)$ is a \emph{marked path configuration}, and we write $\MPC_r(X)$ for the set of such configurations, which is a compact (possibly empty) space.

\begin{lemma}
\label{lem:MPC}
Let $X$ be a subshift. If $r$ is such that for every $n$, $X$ has a configuration whose support contains an $r$-component of size at least $n$, then $\MPC_r(X)$ is nonempty.
\end{lemma}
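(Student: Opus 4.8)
The plan is to realise a point of $\MPC_r(X)$ as a limit of finite approximations, the only non-formal ingredient being a geometric fact about $\Z^d$: in the ``$r$-graph'' on $\Z^d$ (vertices $\Z^d$, with $u\sim v$ iff $0<d(u,v)\le r$) every vertex has at most $\Delta_r:=|B_r(0)|-1$ neighbours and every edge has $d$-length $\le r$, so an $r$-connected subset of large cardinality must contain a long \emph{injective} $r$-path. Precisely, I claim that for every $k\in\N$ there are $x^{(k)}\in X$ and an injective map $q_k\colon\{-k,\dots,k\}\to\Z^d$ with $q_k(0)=0$, $0<d(q_k(j),q_k(j{+}1))\le r$ for all $j$, and $q_k(\{-k,\dots,k\})\subseteq\{i:x^{(k)}_i\ne0\}$. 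To prove the claim, take a configuration whose support has an $r$-component $C$ of cardinality at least $N$, with $N$ large depending on $k$. If $C$ is finite it is, in the $r$-graph, a connected graph of maximum degree at most $\Delta_r\ (\ge 2)$ on $\ge N$ vertices, so a breadth-first search from any vertex shows its diameter is at least of order $\log_{\Delta_r}N$; a geodesic between two vertices at maximal distance is then an injective $r$-path with at least $2k$ edges once $N$ is large. If $C$ is infinite, pick $v,w\in C$ with $d(v,w)>2kr$ (possible since balls in $\Z^d$ are finite) and take a shortest $r$-path inside $C$ from $v$ to $w$: it is injective and has more than $2k$ edges. In either case, a path with $2k$ edges reindexes as a sequence $p_{-k},\dots,p_k$; translating the configuration so that $p_0$ is the origin and setting $q_k(j)=p_j$ yields the desired $x^{(k)}$ and $q_k$.

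Next I would pass to the limit. Record the moves $m_k(j):=q_k(j{+}1)-q_k(j)\in B_r(0)\setminus\{0\}$ for $j\in\{-k,\dots,k{-}1\}$, extend each $m_k$ arbitrarily to a point of $B_r(0)^{\Z}$, and use compactness of $B_r(0)^\Z$ and of $X$ to pass to a subsequence along which $m_k\to m\in B_r(0)^\Z$ and $x^{(k)}\to x\in X$. For $k>|j|+1$ the value $m_k(j)$ is a genuine (hence nonzero) move of $q_k$ and $m_k(j)\to m(j)$ in the discrete space $B_r(0)$, so $m(j)\ne 0$ for every $j$. Define $q\colon\Z\to\Z^d$ by $q(0)=0$ and $q(j{+}1)=q(j)+m(j)$. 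For fixed $j$, once $k$ is large all finitely many moves determining $q(j)$ have stabilised, so $q(j)=q_k(j)$; hence for $j\ne j'$ the injectivity of $q_k$ gives $q(j)\ne q(j')$, and $q$ is a bi-infinite injective $r$-path through the origin. Moreover $q(j)=q_k(j)\in\{i:x^{(k)}_i\ne0\}$ for large $k$, and since ``$x_i\ne0$'' is a clopen condition, $q(j)\in\{i:x_i\ne0\}$; thus the whole range of $q$ lies in the support of $x$.

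Finally, rather than taking a limit of the markings, I would read $y$ off $q$ directly: put $y_{q(j)}:=m(j)$ (well defined because $q$ is injective, and $m(j)\in B_r(0)\subseteq[-r,r]^d$) and $y_i:=\#$ for $i\notin q(\Z)$. Then $y_i\ne\#$ implies $i=q(j)$, so $x_i\ne 0$; in the graph $G(Y)$ the node set is exactly $q(\Z)$ and the edge set is exactly $\{(q(j),q(j{+}1)):j\in\Z\}$, so every node has in-degree and out-degree one and there is no cycle (again by injectivity of $q$), i.e. $(x,y)\in\PC_r(X)$; and $0=q(0)$ lies on the single bi-infinite path $\dots,q(-1),q(0),q(1),\dots$, which contains every node. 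Hence $(x,y)\in\MPC_r(X)$, so $\MPC_r(X)\ne\emptyset$. The step with real content is the first one; everything after it is routine compactness, and the one pitfall — extra nodes of $G(Y)$ materialising off the limit path if one naively takes a limit of approximate markings — is exactly what reading $y$ off $q$ avoids.
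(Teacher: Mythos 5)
Your proof is correct and follows essentially the same route as the paper: extract arbitrarily long injective $r$-paths from large $r$-components (the paper takes a minimal-length $r$-path between two cells at distance at least $2n+1$, which is automatically injective), translate so that the middle of the path sits at the origin, and conclude by compactness. The only real difference is at the end: the paper takes a limit of the marked pairs $(x,y)\in\PC_r(X)$ directly, whereas you take limits of the configurations and of the move sequences and then read the marking off the limit path; your variant is slightly more careful, since it rules out by construction the stray marked nodes off the path through the origin that a naive limit of markings could in principle produce.
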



\begin{proof}
Suppose that $X$ has, for all $n$, some configuration $x$ whose support has an $r$-component of size at least $n$. Then there are arbitrarily long distances between cells in the arbitrarily large components, so consider an $r$-path of minimal length between two cells at distance at least $2n + 1$ along the support of $x$, and construct a pair $(x,y) \in \PC_r(X)$ where a path between them is drawn on the $y$-component. Translating such pairs so that the middle of the path is at the origin, we obtain configurations in $\PC_r(X)$ where the maximal path through $\bol 0$ is of length at least $n$ in both directions. Any limit point of such pairs as $n \rightarrow \infty$ gives a point of $\MPC_r(X)$.
\end{proof}

See Section~\ref{sec:PathCoverExample} for an example of such a path extraction.

Let us now concentrate on the case $d = 2$. The importance of the space $\MPC_r(X)$ is that it inherits the dynamics of $\Path^2$ in an obvious way, the action of $n \in \Z$ following the path written on the second component for $n$ steps. More precisely, if $(x,y) \in \MPC_r(X)$ and $y_0 = \bol v$, define $\tau(x,y) = \sigma^{\bol v}(x,y)$, so that $(\MPC_r(X), \tau)$ becomes a $\Z$-system.



\begin{definition}
\label{def:Highway}
An \emph{$r$-road (configuration)} $x$ is one whose support, up to translation, is the range of a two-way infinite ascending $r$-path $p : \Z \to \Z^2$. If we can choose $p$ so that the corresponding colored path $q$ defined by $q(i) = (p(i), x_{p(i)})$ is uniformly recurrent, then we call the configuration $x$ an \emph{$r$-highway}. 
An \emph{$r$-preroad (resp. $r$-prehighway) configuration} is one whose nonzero support contains a translated image of an infinite ascending (resp. ascending and uniformly recurrent) $r$-path. 
\end{definition}

The essential trace sparseness of a road configuration is~$1$, and the essential trace sparseness of a preroad is at least $1$, but need not be finite. Note that $r$-prehighways do not, in the general case, have anything to do with paths -- the all-$1$ point is an $r$-prehighway for every $r$. 


\begin{lemma}
Let $X$ be a trace sparse subshift. If $\MPC_r(X)$ is nonempty, then $X$ contains an $r$-preroad.
\end{lemma}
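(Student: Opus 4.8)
The plan is to reduce the statement to the classification of minimal path spaces (Theorem~\ref{thm:PathCharacterization}), using rowsparseness to discard all of its cases except the ascending and descending ones. Since $\MPC_r(X)$ is nonempty and compact and $\tau$ acts on it continuously, I would first pick a minimal $\tau$-subsystem $M\subseteq\MPC_r(X)$. For $(x,y)\in M$ let $p=p_{(x,y)}:\Z\to\Z^2$ be its marked path, indexed so that $p(0)=0$. Two observations are used throughout: every cell $p(n)$ lies in $\supp(x)$ (immediate from the definition of $\PC_r$), and $p$ is injective, since the path graph $G(y)$ has in- and out-degree at most one and contains no cycles, so a repeated node would force a cycle.

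The map $(x,y)\mapsto p_{(x,y)}$ is continuous (the first $|n|$ moves of the path are read off $y$ on a bounded neighbourhood of the origin) and intertwines $\tau$ on $M$ with the shift $\tau$ on $\Path^2$; composing with the passage to height paths gives a continuous $\tau$-equivariant map onto a set $N'\subseteq\Path_r$, which is a minimal path space because it is a factor of the minimal system $M$. Now I would apply Theorem~\ref{thm:PathCharacterization} to $N'$.

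Rowsparseness eliminates two of the four cases. If some $p'\in N'$ is bounded, with values in $[-B,B]$, then the corresponding path $p$ maps $\Z$ injectively into $\supp(x)\cap(\Z\times[-B,B])$, a union of $2B+1$ rows of a rowsparse configuration and hence a finite set — a contradiction. If some $p'\in N'$ takes some value $h$ infinitely often, then $p$ maps infinitely many indices into the finitely many nonzero cells on row $h$ of the corresponding $x$, again contradicting injectivity. So every path in $N'$ is ascending, or every path in $N'$ is descending; unwinding the definition of ascending for paths into $\Z^2$, in the first case every $p_{(x,y)}$ is ascending, and in the second each $n\mapsto p_{(x,y)}(-n)$ is an ascending $r$-path with the same (hence still $\subseteq\supp(x)$) range. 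Either way, fixing any $(x,y)\in M$ gives $x\in X$ whose support contains the range of a two-way infinite ascending $r$-path, i.e. $x$ is an $r$-preroad, as required.

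The one delicate point — and the reason one cannot simply read a preroad off a single element of $\MPC_r(X)$ — is that the marked path produced by $\MPC_r$ need not be ascending: it may oscillate, or ascend in both time directions, and since rowsparseness is merely non-uniform there is a priori no bound on how long the path can linger at a fixed height. Passing to the minimal subsystem $M$ and invoking the path classification is precisely what converts the local finiteness ``every row is finite'' into the uniform conclusion ``the path ascends at a bounded rate''; the rest is bookkeeping.
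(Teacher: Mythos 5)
Your proposal is correct and follows essentially the same route as the paper: pass to a minimal $\tau$-subsystem of $\MPC_r(X)$, apply the classification of minimal path spaces (Theorem~\ref{thm:PathCharacterization}), use rowsparseness (together with the fact that the marked path never revisits a cell) to exclude the bounded and infinite-to-one cases, and reverse a descending path to get an ascending one. The only difference is that you make explicit some points the paper leaves implicit (injectivity of the marked path, continuity and equivariance of the projection to the path space), which is fine.
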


\begin{proof}
Take any minimal subsystem of $\MPC_r(X)$ in the $\Z$-action $\tau$ and a point $(x,y)$ in it. Then the support of $y$ is the range of a uniformly recurrent path $p \in \Path^2$. It follows from Theorem~\ref{thm:PathCharacterization} that $p$ is either ascending, descending or not finite-to-one. The last case is impossible because $X$ is trace sparse. If the path is descending, its reversal is ascending, so in both of the other two cases we obtain that the support of $x$ contains the range of an infinite ascending path.
\end{proof}

We also need the following simple observations.

\begin{lemma}
\label{lem:StringLeeway}
Let $x$ be a preroad and $p : \Z \to \Z^2$ an ascending path whose range is contained in the support of $x$. If the distance from the range of $p$ to every cell in the support of $x$ is bounded, then $x$ is a road. 
\end{lemma}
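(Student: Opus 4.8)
The plan is to manufacture from $p$ a two-way infinite ascending path $\tilde p \colon \Z \to \Z^2$ whose range is \emph{exactly} the support $S$ of $x$. Since a road is by definition a configuration whose support equals, up to translation, the range of such a path, this is all we need. The path $\tilde p$ will simply follow $p$, but near each vertex $p(i)$ it will make a bounded detour collecting the boundedly many extra nonzero cells of $x$ lying in that neighbourhood.

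To make this precise, fix $N$ with every cell of $S$ within distance $N$ of $\mathrm{range}(p)$ (possible by hypothesis), and let $\rho$ be a bound on the moves of $p$. Put $A_i = B_N(p(i)) \cap S$ for $i \in \Z$. Each $A_i$ is finite with $|A_i| \le (2N+1)^2 =: L$, contains $p(i)$ (since $p(i) \in \mathrm{range}(p) \subseteq S$), and $\bigcup_{i} A_i = S$ because $S \subseteq B_N(\mathrm{range}(p))$. Define $\tilde p$ in consecutive blocks: block $i$ begins at $p(i)$, runs through the remaining cells of $A_i$ in some order, and then steps to $p(i+1)$, which starts block $i+1$. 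Every move of $\tilde p$ joins two cells of $B_N(p(i)) \cup B_N(p(i+1))$ for some $i$, hence has length at most some $r''$ depending only on $N$ and $\rho$; replacing $\tilde p$ by $\tilde p - \tilde p(0)$ puts it in $\Path^2_{r''}$ while keeping it ascending. By construction $\tilde p$ is two-way infinite and its range is $\bigcup_i(\{p(i)\}\cup A_i) = S$, so once we know $\tilde p$ is ascending we conclude that $S$ is, up to translation, the range of a two-way infinite ascending $r''$-path, i.e. $x$ is an $r''$-road.

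The remaining and only real point is that $\tilde p$ is ascending. Let $q$ and $\tilde q$ be the height paths of $p$ and $\tilde p$, and fix $m$ with $q(j+m+1)\ge q(j)+1$ for all $j$; then $q(j+\ell(m+1)) \ge q(j)+\ell$, and since $|q(a)-q(b)|\le \rho|a-b|$ this yields a constant $K$ (depending on $m, \rho, N$) with $q(i')\ge q(i)+2N+1$ whenever $i'\ge i+K$. On the other hand each block of $\tilde p$ has length at most $L$, and throughout block $i$ the height $\tilde q$ stays in $[q(i)-N,\,q(i)+N]$. So if a $\tilde p$-index $j$ lies in block $i$, then $j+KL$ lies in some block $i'\ge i+K$, whence $\tilde q(j+KL)\ge q(i')-N\ge q(i)+N+1 > q(i)+N \ge \tilde q(j)$. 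Hence $\tilde q$, and therefore $\tilde p$, is ascending (with window $KL-1$), completing the proof. I expect this estimate to be the one nontrivial step: a priori the detours into the $A_i$ could exactly cancel the ascent of $p$, and the point is that they have globally bounded height deviation and bounded length whereas $p$ climbs arbitrarily high, so over any sufficiently long stretch the ascent prevails.
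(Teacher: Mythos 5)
Your proposal is correct and follows essentially the same route as the paper: expand each index of $p$ into a bounded-length block that enumerates the support cells within the fixed distance of $p(i)$, observe the resulting path has bounded moves and range exactly the support, and note it remains ascending because the detours have bounded height deviation while $p$ climbs. The only difference is cosmetic (the paper pads blocks to a uniform length, and it leaves the final ascension estimate as "clear" where you spell it out).
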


\begin{proof}
Suppose that $m$ is such that $x_v \neq 0 \implies \exists n: |p(n) - \bol v| \leq m$. Then let $a = |[-m,m]^2|$ and construct a path $q$ as follows: Let $n \in \Z$ and enumerate all cells in the support of $x$ at distance at most $m$ from $p(n)$ as $\bol v_0, \bol v_1, \ldots, \bol v_{\ell-1}$, where $\ell \leq a$. Define $q(na + j) = \bol v_j$ for all $j \in [0, \ell-1]$ and $q(na + j) = \bol v_1$ for $j \in [\ell, a-1]$. Clearly the support of $x$ is precisely the range of $q$, and the path $q$ is clearly ascending since $p$ is (though possibly with different ascension constant).
\end{proof}

\begin{lemma}
\label{lem:RoadIsHighway}
Let $X$ be almost minimal and $x \in X$ be a road. Then $x$ is a highway.
\end{lemma}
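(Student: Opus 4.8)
The plan is to run the marked path cover on $x$ itself, pass to a subsystem that is minimal for the path action $\tau$, and then transport uniform recurrence of the path back to $x$ using almost minimality. Since $x$ is a road, $S(x)$ is (up to translation) the range of a two-way infinite ascending $r$-path, hence an infinite set, so $x \neq 0^{\Z^2}$ and $\OC{x} = X$ by almost minimality. Translating, assume $0 \in S(x)$. Ascension bounds the horizontal diameter of any bounded number of consecutive rows of $S(x)$ by a constant, so $S(x)$ can be enumerated row by row (left to right within each row) with bounded jumps, i.e.\ traced by an \emph{injective} two-way infinite ascending $r'$-path $p$ with $p(0) = 0$ and $p(\Z) = S(x)$ for some $r' \geq r$. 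Marking $p$ on $x$ produces a point $(x,y) \in \MPC_{r'}(X)$ whose node set is exactly $S(x)$.

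Next, I would observe that two properties of $(x,y)$ propagate through the $\tau$-orbit closure $\overline{\mathcal{O}_\tau(x,y)} \subseteq \MPC_{r'}(X)$: (i) the support of the first coordinate equals the node set of the second; and (ii) the marked path is ascending with the ascension parameter of $p$. Both are $\tau$-invariant, since $\tau$ shifts both coordinates simultaneously, and both are closed: (i) because the alphabets are discrete, and (ii) because ``$p'(j+m+1) > p'(j)$ for all $j$'' on the height path is a countable intersection of clopen conditions on the marked path. Now let $M \subseteq \overline{\mathcal{O}_\tau(x,y)}$ be any $\tau$-minimal subsystem. Every point of $M$ is $\tau$-uniformly recurrent, so the path marked on its second coordinate is uniformly recurrent, and by (i)--(ii) it is ascending with range equal to the support of its first coordinate. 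Hence the first coordinate of every point of $M$ is a highway.

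It then remains to show that $x$ is itself one of these first coordinates. The set $F$ of first coordinates of points of $M$ is closed in $X$ and consists of nonzero highways, so $\OC{z} = X$ for each $z \in F$ by almost minimality. Fix $z \in F$ with marked path $q$, so $q(\Z) = S(z)$, and (as $x \in X = \OC{z}$) pick $v_n$ with $\sigma^{v_n}(z) \to x$. Since $0 \in S(x)$, we have $v_n \in S(z) = q(\Z)$ for all large $n$, say $v_n = q(k_n)$; then $\sigma^{v_n}(z)$ is exactly the first coordinate of $\tau^{k_n}$ applied to the point of $M$ with first coordinate $z$, and that iterate lies in $M$. A subsequential limit of these iterates lies in $M$ and has first coordinate $x$, so $x \in F$, i.e.\ $x$ is a highway.

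The step I expect to be the crux is the last one: converting the $\sigma$-convergence $\sigma^{v_n}(z) \to x$ inside $X$ into $\tau$-convergence inside the minimal system $M$, which is what upgrades ``$X$ contains a highway'' to ``$x$ is a highway''. It works because putting $0$ into the support forces the shift vectors $v_n$ onto the marked path, so they are realized by powers of $\tau$. The only other delicate point is the injective bounded-width traversal of $S(x)$ in the first paragraph, which is routine given that ascension keeps $S(x)$ thin.
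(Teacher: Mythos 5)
Your proof is correct and takes essentially the same route as the paper's: both arguments pass to a uniformly recurrent path in the orbit closure of the path-shift dynamics coupled to the configuration, and then use almost minimality together with a compactness/limit argument to transfer that recurrent parametrization back to $x$ itself. The difference is only bookkeeping --- you work inside $\MPC_{r'}(X)$ with an explicit injective reparametrization and realize the $\sigma$-convergence $\sigma^{v_n}(z) \to x$ as $\tau$-convergence in a minimal subsystem, while the paper works directly in $\Path^2$, taking a uniformly recurrent $q$ in the orbit closure of $p$ and tracing the finite patterns of $x$ by subpaths of $q$.
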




\begin{proof}
Shift $x$ suitably to obtain an ascending path $p \in \Path^2$ with $p(0) = \bol 0$ whose range is equal to the support of $x$. Let $p'$ be the corresponding colored path $p'(i) = (p(i), x_{p(i)})$. Take a uniformly recurrent path $q$ in the orbit closure of $p'$ in $\Path^2_\Sigma$. Then $q$ is obtained by taking a limit of some sequence of translates of $p'$. Define $y \in \Sigma^{\Z^2}$ by $y_{\bol v} = a$ if $q(n) = (\bol v, a)$ for some $n \in \Z$, $a \in \Sigma$, and $y_{\bol v} = 0$ otherwise. One can show that $y$ is well-defined and is in the orbit closure of $x$ by following the translates of $p'$ that tend to $q$ (we need the fact that $p$ is ascending).

Since $q$ is uniformly recurrent, $y$ is a highway. By almost minimality $y$ contains the same patterns as $x$. It follows that also $x$ must be a highway: every finite pattern in $x$ is traced by a subpath of $q$ (up to translation), and thus the support of $x$ is the range of a colored path in the orbit closure of $q$.
\end{proof}

Note that the paths $p, p'$ need not be uniformly recurrent. It is always possible to parametrize the range of a uniformly recurrent path in a non-recurrent way, for example for a uniformly recurrent injective $p \in \Path^2$ consider $q(n) = p(n - 1)$ for $n \geq 1$, $q(n) = p(n)$ otherwise. The lemma states that there is always automatically also a uniformly recurrent parametrization (for $q$, one such parametrization is $p$).

\section{The main theorem}
\label{sec:Main}

In this section, we prove our main theorem, the characterization of almost minimal trace sparse subshifts. Before this, as a warm-up, we give a characterization of one-dimensional almost minimal subshifts.

\begin{theorem}
\label{thm:OneDimensionalMainProof}
Let a subshift $X \subset \Sigma^\Z$ be almost minimal and nontrivial. Then exactly one of the following holds:
\begin{itemize}
\item $X$ is the orbit closure of a finite point, or
\item $X$ is the orbit closure of a \froctal{}.
\end{itemize}
\end{theorem}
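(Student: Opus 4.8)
The plan is to take an arbitrary nonzero point $x \in X$ and attempt to build a blob fractal structure around it, using almost minimality to control how far apart nonzero symbols can be. Since $X$ is nontrivial and almost minimal, every nonzero point generates $X$, so the pattern content of $x$ is the whole language of $X$. First I would fix a nonzero symbol occurring in $x$ and, for each radius $r$, consider the $r$-blob of some nonzero cell. Almost minimality forces each finite pattern of $X$ to recur syndetically, so in particular any $r$-blob appearing in $x$ reappears within a bounded distance of every nonzero cell; this gives the recurrence needed to close up the fractal hierarchy.

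The key dichotomy is whether, for some $r$, the support of $x$ has an $r$-component of unbounded size (equivalently, whether $\MPC_r(X)$ is nonempty by Lemma~\ref{lem:MPC}), or whether for every $r$ all $r$-components of every configuration are finite. In the first case, I would invoke Lemma~\ref{lem:MPC} to get a marked path configuration, pass to a minimal subsystem of $\MPC_r(X)$ under $\tau$, and apply the path classification Theorem~\ref{thm:PathCharacterization}: in a one-dimensional subshift the path lives in $\Z$, but the relevant observation is simpler — an infinite $r$-component in a one-dimensional configuration, being a subset of $\Z$ whose consecutive gaps are at most $r$, forces cofinitely many cells to be filled, hence by almost minimality (syndetic recurrence of a long fully-supported word) the point must be a translate of something that, after taking orbit closure, collapses; one checks this contradicts nontriviality unless $x$ is essentially constant on a half-line, which again is excluded. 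So in fact the first case cannot occur for a nontrivial almost minimal one-dimensional $X$ unless $X$ is the orbit closure of a finite point — more precisely, I would argue that an unbounded $r$-component together with syndetic recurrence of blobs forces the support to be syndetic, contradicting $0^\Z \in X$ (which holds since $X$ is almost minimal with fixed point $0^\Z$). Thus we are in the second case: all $r$-components are finite for every $r$.

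In the second case I would build the blob fractal inductively. Set $B_1$ to be the (finite) set of $r_1$-blobs occurring in $x$ for a suitable $r_1$; finiteness of $B_1$ follows because the subshift is rowsparse-like in the degenerate one-dimensional sense — actually here I only need that $x$ has bounded-size $r_1$-components and that, by almost minimality, only finitely many such blob shapes occur (an isolated-pattern / compactness argument, or directly: if infinitely many blob shapes occurred, one could extract a configuration with an infinite $r_1$-component in the orbit closure). Then choose $r_2 \gg r_1$ large enough that the $r_2$-blob of any nonzero cell contains at least two disjoint $r_1$-blobs (possible since the support is not syndetic, so there are gaps larger than $2r_1$, but also the support is infinite in at least one direction — if the support of $x$ is finite we are already in the finite-point case — so within a large enough window we see several $r_1$-blobs) and, using syndetic recurrence, a translate of every $B_1$-blob. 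Iterating gives increasing $r_i$ and finite sets $B_i$ of $r_i$-blobs satisfying the four bullet conditions of the blob fractal definition, with the support of $x$ contained in the limit of $B_j$-blobs. Taking the orbit closure of $x$ then exhibits $X$ as $\OC{\text{blob fractal}}$.

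The main obstacle I expect is the bookkeeping that makes the two cases genuinely exhaustive and mutually exclusive, and in particular ruling out the "infinite $r$-component" case cleanly: one must show that syndetic recurrence of blobs is incompatible with an unbounded $r$-component unless the whole point degenerates to a finite point's orbit closure. The cleanest route is probably to note that if some $r$-component is infinite, its cells form a subset of $\Z$ with gaps $\le r$, so it contains arbitrarily long runs with no gap exceeding $r$; almost minimality then forces such runs to recur syndetically, making the support syndetic and contradicting $0^\Z\in X$ — so that case is vacuous, and every nontrivial almost minimal one-dimensional $X$ with a nonfinite generating point must be a blob fractal, while if the generating point is finite we are manifestly in the first bullet. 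A secondary subtlety is ensuring the finiteness of each $B_i$ and that each $B_{i+1}$-blob really is a zero-gluing of translated $B_i$-blobs with at least two disjoint ones — both follow from choosing the gap parameters $r_{i+1}$ large relative to $r_i$ and from syndetic recurrence, but need to be stated carefully to match the definition.
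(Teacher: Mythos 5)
Your overall architecture is the same as the paper's: fix a nonzero point $x$, which generates $X$ by almost minimality; if its support is finite you are in the first case, and otherwise you build the blob hierarchy, using that for each $r$ only finitely many $r$-blobs occur and that every nonzero pattern of $X$ occurs within a bounded distance of every nonzero cell. The hierarchy construction in your third paragraph is essentially the paper's argument (the detour through Lemma~\ref{lem:MPC} and Theorem~\ref{thm:PathCharacterization} is not needed, as you yourself note).

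The genuine gap is in how you rule out unbounded $r$-components. You appeal to the principle that ``almost minimality forces each finite pattern of $X$ to recur syndetically,'' and in the crucial step you conclude that long runs with gaps at most $r$ ``recur syndetically, making the support syndetic and contradicting $0^\Z\in X$.'' That principle is false here: in a nontrivial almost minimal subshift whose fixed point is $0^\Z$, no nonzero word can occur syndetically in a nonzero point (its support would then be syndetic, so its orbit closure, which must be all of $X$, could not contain $0^\Z$); the sunny-side-up subshift, where the word $1$ occurs exactly once in the generating point, is the basic counterexample. So the inference as written fails --- indeed, if it were valid, it would show that no nontrivial almost minimal subshift containing $0^\Z$ exists at all. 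What almost minimality actually gives, and what you correctly use elsewhere, is occurrence within bounded distance of every \emph{nonzero} cell, and that alone does not make any support syndetic. The conclusion you want is nevertheless true, and the repair is a compactness argument (which is also what the paper's terse ``it follows'' hides): if for some $r$ the $r$-components occurring in the language of $X$ had unbounded size, recenter translates of $x$ on longer and longer stretches whose zero-gaps are at most $r$; a limit point $y\in X$ is nonzero and contains no word $0^{r+1}$, so $\OC{y}$ does not contain $0^\Z$, contradicting almost minimality. The same argument replaces your claim that an infinite $r$-component ``forces cofinitely many cells to be filled,'' which is inaccurate (gaps of size up to $r$ remain, and the component may be one-sided). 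With this fix, and with ``syndetic recurrence'' replaced throughout by the bounded-distance-from-nonzero-cells property, your proof matches the paper's.
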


\begin{proof}
Fix a nonzero point $x \in X$ with support $S \subset \Z$. If the language of $x$ does not contain arbitrarily long words of the form $0^n$, then the orbit closure does not contain $0^\Z$, contradicting almost minimality. It follows that for every $r \in \N$, there exists $n \in \N$ such that for all $i \in \Z$ the $r$-connected component of $i$ in $S$ is of diameter at most $n$. Collecting the subwords of $x$ corresponding to these connected components, we obtain a set of finite $r$-\blabs{}, which must be finite, since the \blabs{} are words of length at most $n+2r$.

Now, we start building a \froctal{} structure for $x$. Let $r_1 \in \N$ be arbitrary, and let $B_1$ be the finite collection of \blabs{} obtained across all $i \in S$ as in the previous paragraph. If $x$ is not a finite point, then $|i - j|$ can be arbitrarily large for $i, j \in S$. From this, it follows that for large enough $r$, there is an $r$-\blab{} $b$ whose nonzero support is strictly larger than that of any of the \blabs{} in $B_1$. By picking $r$ large enough, this is true for all $r$-\blabs{}, because $b$ appears at a bounded distance from every nonzero symbol by almost minimality. By increasing $r$ yet more, every \blab{} in $B_1$ appears as a subpattern of every $r$-\blab{}. Pick $r_2 \gg r_1$ with these properties, and define $B_2$ as the set of $r_2$-\blabs{}.

Continuing inductively, we obtain that $x$ is a \froctal{}.
\end{proof}


\begin{lemma}
Let $X \subset \Sigma^{\Z^2}$ be the orbit closure of a nontrivial finite configuration, a highway or a \froctal{}. Then $X$ is almost minimal.
\end{lemma}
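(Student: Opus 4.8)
The plan is to unpack the definition of almost minimality. Since $X=\OC{x}$, the point $0^{\Z^2}$ will be the intended fixed point, and every point of $X$ is a limit of translates of $x$, so the finite patterns occurring in points of $X$ are exactly those occurring in $x$. Thus ``every point of $X$ except $0^{\Z^2}$ generates $X$'' is equivalent to ``every nonzero $y\in X$ contains every finite pattern $w\sqsubset x$''. I would therefore isolate two properties of $x$ that together suffice in all three cases:
\begin{itemize}
\item[(a)] for every $s$, the configuration $x$ contains a ball of radius $s$ consisting only of $0$'s;
\item[(b)] for every finite pattern $w\sqsubset x$ there is $M$ such that $w$ occurs within distance $M$ of every nonzero cell of $x$.
\end{itemize}
Property (a) yields translates of $x$ converging to $0^{\Z^2}$, so $0^{\Z^2}\in X$. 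Property (b) passes to the orbit closure: a nonzero $y\in X$ has a nonzero cell $c$, and approximating $y$ by translates $\sigma^{v_n}(x)$ agreeing with $y$ on a large window around $c$, each $w\sqsubset x$ must occur within $M$ of $c$ in $\sigma^{v_n}(x)$, hence in $y$ for large $n$. So it remains to verify (a) and (b) for finite configurations, highways, and blob fractals.

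For a finite configuration both are immediate, since the support of $x$ has bounded diameter (here one can even write $\OC{x}=\{\sigma^v(x):v\in\Z^2\}\cup\{0^{\Z^2}\}$ directly). For a highway $x$, property (a) holds because the support of $x$ is the range of an ascending $r$-path: being ascending, its height coordinate grows at least linearly in the index while the horizontal coordinate drifts only boundedly per unit of height, so $x$ has essential width $1$ and its support lies in a ``thin'' set that leaves balls of zeroes of unbounded radius. Property (b) is the recurrence built into the notion of highway: the guiding path is uniformly $\tau$-recurrent, so, as in the proof of Lemma~\ref{lem:RoadIsHighway} and the argument behind Theorem~\ref{thm:PathCharacterization}, every finite pattern of $x$ is determined by a window of the path together with the boundedly many labels it carries, and such windows reappear syndetically along the path, i.e.\ within bounded distance of every nonzero cell.

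The blob fractal case is where the real work is. Write $x$ as the limit of a nested, consistent chain $b_1\sqsubset b_2\sqsubset\cdots\sqsubset x$ of translated $B_i$-blobs with $\supp(x)=\bigcup_i \supp(b_i)$. The key geometric observation is that, because a blob's domain equals the $r_i$-neighbourhood of its support, the zero-gluing defining a $B_{i+1}$-blob forces the supports of its constituent $B_i$-blobs to be pairwise at distance $>r_i$; propagating this along the chain gives $\supp(x)\cap B_{r_i}(\supp(b_i))=\supp(b_i)$. A short estimate near, say, the leftmost point of $\supp(b_i)$ then exhibits a ball of radius $\sim r_i/3$ inside $B_{r_i}(\supp(b_i))$ disjoint from $\supp(x)$; since $r_i\to\infty$ this gives (a). For (b): given $w\sqsubset x$, pick $j$ with $r_j\ge \mathrm{diam}(D(w))$; the blob covering the nonzero part of $w$ is $r_j$-padded, so $w$ occurs inside a single translated $B_j$-blob. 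Descending the blob tree from any $B_{i'}$-blob containing a given nonzero cell down to level $j+1$ shows that cell lies in a translated $B_{j+1}$-blob, which by definition contains a translate of every $B_j$-blob; hence every $B_j$-blob, and therefore $w$, occurs within the maximal $B_{j+1}$-blob diameter of every nonzero cell, giving (b).

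The main obstacle is the verification of (a) for blob fractals: the bare clause ``a $B_{i+1}$-blob contains two $B_i$-blobs with disjoint supports'' is not by itself enough to produce large empty balls, since a blob may contain many sub-blobs packed together. The two points that make it go through are (i) upgrading ``disjoint'' to ``pairwise at distance $>r_i$'' using the exact shape of blob domains and the zero-gluing condition, and (ii) the elementary boundary-point trick that converts such a separation into a genuine large ball of zeroes inside $x$. Everything else is bookkeeping with the four defining clauses of blob fractals, and the finite-configuration and highway cases are routine once (a) and (b) are in place; combining the three cases with the reduction in the first paragraph yields that $X$ is almost minimal.
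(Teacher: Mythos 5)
Your proof is correct and follows essentially the same route as the paper's: in each of the three cases you show that every finite pattern of the generating configuration occurs within bounded distance of every nonzero cell (via boundedness of the support, uniform recurrence of the guiding path, and the fact that every nonzero cell lies in a $B_{j+1}$-blob containing a translate of every $B_j$-blob), and conclude that every nonzero point of the orbit closure generates it. The only difference is that you also verify explicitly, via your property (a) and the separation forced by zero-gluing of $r_i$-padded blobs, that $0^{\Z^2}$ lies in the orbit closure -- a point the paper's proof leaves implicit -- which is a welcome extra detail rather than a different approach.
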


\begin{proof}
The case of finite configurations is trivial.


For highways $x$, let $P$ be a nonzero pattern in $x$ with a connected domain. Pick an arbitrary nonzero cell $\bol v$ in $P$ and follow the colored path $p$ giving the support of $x$ forward and backward from the cell $\bol v$ for $t$ steps, to obtain some finite subpath $q$ of $p$ of length $2t+1$ in the support of $x$. Because the support of $x$ is the range of $p$ and $p$ is ascending, there is a function $f$ of the size of the domain $D(P)$ of $P$ such that if $t > f(|D(P)|)$ then $q$ visits all cells of $P$ and $p$ can never again visit the domain $D(P)$. Since $p$ is uniformly recurrent, the subpath $q$ is traced every $m$ steps for some $m$, so in particular $P$ occurs at a bounded distance from every cell in the support of $x$, and thus the same is true for its orbit closure $X$. 


For \froctals{}, let $B_i$ be the sets of $r_i$-\blabs{} as in the definition. If $x \in X$ and $P$ is a nonzero pattern in $x$, then let $i$ be such that $r_i$ is larger than the diameter of the domain of $P$. Then $P$ is contained in a single $r_i$-\blab{} $B \in B_i$ in $x$. Now, let $y$ be any point in $X$ and $\bol v$ any nonzero cell in $y$, i.e. $y_{\bol v} \neq 0$. Since $y$ is in the orbit closure of $x$, it is also a limit of \blabs{}, and thus $\bol v$ must be contained in the support of a translate of a $B_{i+1}$-\blab{}. This \blab{} contains a copy of $B$, and thus the pattern $P$.
\end{proof}

Finite configurations and highways (even roads) of course always have essential trace sparseness one. \Froctals{} can have infinite essential trace sparseness, and their essential trace sparseness can be any finite number even in the trace sparse case, see Section~\ref{sec:Examples}.

The following lemma is easy to prove.

\begin{lemma}
Let $X$ be the orbit closure of a finite configuration (highway or \froctal{}, respectively). Then every nonzero point in $X$ is a finite configuration (highway or \blab{} fractal, respectively).
\end{lemma}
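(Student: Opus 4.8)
The plan is to treat the three cases separately, using throughout the fact established in the previous lemma that $X$ is almost minimal with unique minimal subsystem $\{0^{\Z^2}\}$; in particular every $y\in X$ other than $0^{\Z^2}$ satisfies $\OC{y}=X$, so $y$ and the generating point $x$ have the same finite patterns. The point $0^{\Z^2}$ is itself a (degenerate) finite configuration, hence covered in the finite and blob‑fractal cases; in the highway case it is the one exception, and I would read the conclusion as ``every nonzero point of $X$ is a highway'' (equivalently, $0^{\Z^2}$ is folded into the finite case). So fix $x$ of the relevant type and a nonzero $y\in X$, written as $y=\lim_n\sigma^{v_n}(x)$.

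The finite case is immediate: if $x$ has $k$ nonzero cells then for every $R$ the pattern $y|_{B_R}$ occurs in $x$ and so has at most $k$ nonzero cells; letting $R\to\infty$ shows $\supp(y)$ has at most $k$ cells.

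For the blob‑fractal case I would rerun the construction of Theorem~\ref{thm:OneDimensionalMainProof}, in its two‑dimensional form, for $y$. We may assume $x$ is not finite (else we are in the first case). The inputs are: (i) for every $r$ the $r$‑components of $\supp(y)$ are uniformly bounded — this is inherited from $x$, since a blob fractal has uniformly bounded $r$‑components for each $r$ (once $r_i\ge r$, each $r$‑component lies inside a single $r_i$‑blob, of which there are finitely many), and $y$ has the same patterns as $x$; (ii) $y$ is not finite — otherwise, by the finite case applied to $y$, $\OC{y}=X$ would consist only of finite configurations, contradicting $x\in X$; and (iii) almost minimality, which forces any given $r_i$‑blob to reoccur within a bounded distance of every nonzero cell, so that for $r_{i+1}\gg r_i$ every $r_{i+1}$‑blob contains a translate of every $r_i$‑blob and, using (ii), at least two $r_i$‑blobs with disjoint supports. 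Inductively this yields a blob‑fractal structure for $y$. (Alternatively one reuses the structure $(r_i,B_i)$ of $x$: its first three defining clauses refer only to the finite sets $B_i$ and so transfer verbatim, while the fourth holds for $y$ because the $r_i$‑blobs occurring in $y$ occur in $x$ and are therefore among the $B_i$, and the nested $r_i$‑blobs of any fixed nonzero cell exhaust $\supp(y)$.)

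For the highway case, by Lemma~\ref{lem:RoadIsHighway} and almost minimality it suffices to show $y$ is a road, i.e. that $\supp(y)$ is, up to translation, the range of an ascending $r$‑path. Write $\supp(x)=\mathrm{range}(p)$ for a uniformly recurrent ascending $r$‑path $p$, pick a nonzero cell $w_0$ of $y$, choose (for large $n$) $t_n$ with $p(t_n)=w_0+v_n$, and let $\hat p^{(n)}\in\Path^2_r$ be $p$ re‑based at $t_n$, so that $\mathrm{range}(\hat p^{(n)})=\supp(\sigma^{v_n+w_0}(x))$ and $\sigma^{v_n+w_0}(x)\to\sigma^{w_0}(y)$. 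Passing to a subsequence, $\hat p^{(n)}\to\hat p^{\infty}$ in the compact space $\Path^2_r$; the path $\hat p^{\infty}$ is again ascending, since the move sequences of the $\hat p^{(n)}$ lie in the orbit closure of the move sequence of $p$, which is minimal (as $p$ is uniformly recurrent) and hence consists entirely of paths ascending with a fixed parameter. It remains to check $\mathrm{range}(\hat p^{\infty})=\supp(\sigma^{w_0}(y))$: the inclusion $\subseteq$ follows from pointwise convergence, and for $\supseteq$, if $v\in\supp(\sigma^{w_0}(y))$ then $v\in\mathrm{range}(\hat p^{(n)})$ for large $n$, and since an ascending $r$‑path occupies any fixed height only over a window of indices of bounded length lying within a bounded distance of the index $0$, the index realizing $v$ is bounded uniformly in $n$; a further subsequence makes it constant, putting $v$ in $\mathrm{range}(\hat p^{\infty})$. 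Thus $\sigma^{w_0}(y)$, hence $y$, is a road, and by Lemma~\ref{lem:RoadIsHighway} a highway.

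I expect the only genuine work to be the last verification in the highway case — that the range of the limiting path really equals the support of the limiting configuration — together with the underlying (folklore) quantitative bound on how long an ascending $r$‑path can linger at a fixed height; the only other subtlety is settling the exact reading of the fourth clause of the blob‑fractal definition in the second case, after which everything reduces to routine compactness or counting arguments.
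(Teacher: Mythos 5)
Your proposal is correct. The paper itself gives no argument for this lemma (it is dismissed as ``easy to prove''), so there is nothing to match against; what you write is the natural filling-in using the paper's own machinery, and each case checks out. The finite case is the obvious pattern-counting argument. The blob-fractal case is exactly a rerun of the construction in Theorem~\ref{thm:OneDimensionalMainProof} (as also invoked in Theorem~\ref{thm:WhatminimalCases}), and both of your variants work at the same level of rigour as the paper's own treatment of blob structures; your verification that bounded $r$-components and non-finiteness pass from the generator to an arbitrary nonzero limit point is the right input. The highway case is where the genuine content lies, and your argument is sound: re-basing the uniformly recurrent ascending path at the cells $w_0+v_n$, passing to a limit in the compact space of $r$-paths through the origin, noting that ``ascending with parameter $m$'' is a closed condition so the limit path is ascending, and then using the quantitative fact that an ascending $r$-path meets a fixed height only at indices in a window of size controlled by $m$, $r$ and that height to get equality of the limit range with the support of the limit configuration; Lemma~\ref{lem:RoadIsHighway}, applicable since the previous lemma gives almost minimality, then upgrades the road to a highway. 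Your reading of the statement modulo the degenerate point $0^{\Z^2}$ (which lies in the orbit closure of a highway but is not itself one) is the correct interpretation and is worth flagging, as you do.
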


\begin{theorem}
\label{thm:WhatminimalCases}
Let $X$ be an almost minimal and trace sparse $\Z^2$-subshift. Then exactly one of the following holds:
\begin{itemize}
\item $X$ is the orbit closure of a finite configuration,
\item $X$ is the orbit closure of a highway, or
\item $X$ is the orbit closure of a \froctal{}.
\end{itemize}
\end{theorem}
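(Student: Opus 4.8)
The plan is to follow the strategy sketched in the introduction: fix an arbitrary nonzero point of $X$, attempt to build a blob-fractal structure on it, and show that the only obstruction to doing so — its support containing an infinite path — forces a highway in the sparse almost-minimal setting. First some setup. Since $X$ is almost minimal it is nontrivial, and since it is rowsparse we have $0^{\Z^2}\in X$: for a nonzero $x\in X$ and any window, there are only finitely many nonzero cells of $x$ in the corresponding band of rows, so a sufficiently large horizontal shift clears the window and $\sigma^{(n,0)}x\to 0^{\Z^2}$. Being the only proper nonempty subsystem, $\{0^{\Z^2}\}$ is the distinguished fixed point, so every nonzero $x\in X$ satisfies $\overline{\orb(x)}=X$. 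Moreover the projective subdynamics $Y$ is a sparse one-dimensional subshift, hence countable (a finitely supported configuration is determined by its finite support and a labelling) and with $0^\Z$ as its only periodic point, which is non-isolated (else $Y=\{0^\Z\}$ and $X$ is trivial); so by the relevant lemma of Section~\ref{sec:Rowsparsing}, $X$ is uniformly rowsparse — fix $k$ with $Y$ being $k$-sparse. Finally recall the lemma that the orbit closure of a finite configuration, highway, or blob fractal consists only of points of that respective type.

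Now fix a nonzero $x\in X$ and split into cases. If for every $r$ the $r$-components of the support of $x$ are uniformly bounded in size, then either $x$ is finite, and $X=\overline{\orb(x)}$ is the orbit closure of a finite configuration; or $x$ is infinite, and the argument of Theorem~\ref{thm:OneDimensionalMainProof} applies verbatim — bounded $r$-components give a finite set $B_1$ of $r_1$-blobs, and unboundedness of the support together with almost minimality lets us repeatedly pass to $r_{i+1}\gg r_i$ whose $r_{i+1}$-blobs are strictly larger than and contain translates of all $B_i$-blobs — exhibiting $x$ as a blob fractal, so $X$ is the orbit closure of a blob fractal. Otherwise some $r$ has $r$-components of the support of $x$ of arbitrarily large size; then $X$ has, for every $n$, a configuration whose support has an $r$-component of size at least $n$, so $\MPC_r(X)\neq\emptyset$ by Lemma~\ref{lem:MPC}, and since $X$ is rowsparse, the lemma that $\MPC_r(X)\neq\emptyset$ yields a preroad gives a preroad $z\in X$.

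The remaining work — the crux — is to produce a road in $X$; by Lemma~\ref{lem:RoadIsHighway} it is then a highway, and being nonzero its orbit closure is all of $X$. Fix $r'$ with $z$ an $r'$-preroad and an infinite ascending $r'$-path $p$ in the support of $z$; passing to a minimal subsystem of $(\MPC_{r'}(X),\tau)$ we may assume $p$ is uniformly $\tau$-recurrent (using Theorem~\ref{thm:PathCharacterization} to exclude the non-finite-to-one alternative, exactly as in the proof that a nonempty $\MPC_{r'}$ yields a preroad). Let $M$ be the largest integer such that $X$ has configurations carrying $M$ pairwise disjoint infinite ascending $r'$-paths on their supports with arbitrarily large pairwise separation. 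Since an ascending path meets every row, $k$-sparseness forces $1\le M\le k$. Choose a configuration $w$ realizing $M$ such paths $p_1,\dots,p_M$ with huge mutual separation $\ell$. If for some $i$ the nonzero cells of $w$ lying within $\ell/3$ of $\mathrm{range}(p_i)$ but at unbounded distance from it (as $\ell\to\infty$) exist, then — relocalizing around $p_i$ and using almost minimality to push the remaining $p_j$ arbitrarily far away — these cells form, or contain, an infinite ascending path disjoint from and arbitrarily well separated from all of $p_1,\dots,p_M$, contradicting the maximality of $M$. Hence for every $i$ the nonzero neighborhood of $p_i$ is at bounded distance, and Lemma~\ref{lem:StringLeeway} upgrades (the localization around) $p_1$ to a road in $X$. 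Finally, exactly one conclusion holds: at least one does by the above, and by the types lemma disjointness reduces to the three archetypes being distinct, which they are, since highways and blob fractals have infinite support (so neither is a finite configuration) and a highway's support is a single infinite $r$-component (consecutive support cells along the path lie within the bounded move size), hence is not covered by finitely many finite $r_i$-blobs and so is not a blob fractal.

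I expect the main obstacle to be precisely the step in the crux paragraph: showing that unbounded "width" around one of the $M$ maximal parallel paths produces an $(M{+}1)$-st disjoint, well separated ascending path. The tension is that the new path must be far from $p_i$ (so its defining cells must be far from $\mathrm{range}(p_i)$) yet far from all the remaining $p_j$ (so it must stay inside $p_i$'s cluster); almost minimality is what lets one decouple the width parameter from the separation parameter $\ell$, so that both may be taken large at once, and organizing this bookkeeping cleanly — rather than any single estimate — is where the real effort lies.
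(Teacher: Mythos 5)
Your skeleton matches the paper's: attempt the blob-fractal construction (finite point or blob fractal if it succeeds), otherwise get a preroad via Lemma~\ref{lem:MPC}, run a maximality argument on ``parallel paths with arbitrarily large separation'', use almost minimality to extract an extra path from unbounded width, and finish with Lemmas~\ref{lem:StringLeeway} and~\ref{lem:RoadIsHighway}. The setup (uniform rowsparsity, $0^{\Z^2}\in X$, exclusivity of the three cases) is fine, and the slip ``an ascending path meets every row'' is harmless since $M\le kr'$ suffices. But the crux step, which you yourself flag, is a genuine gap, and it is exactly where your formalization diverges from the paper's. You define $M$ as the largest number of pairwise disjoint \emph{infinite} ascending $r'$-paths carried by a \emph{single} configuration of $X$ with arbitrarily large pairwise separation, and then claim that unbounded width around some $p_i$ contradicts maximality. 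The extraction argument, however, only produces arbitrarily long \emph{finite} ascending segments (copies of a subpath pattern $Q$, obtained from almost minimality) lying far from $p_i$ inside the witnessing configuration; to turn these into an infinite path you must let the segment length tend to infinity and pass to a limit, and in that limit the picture falls apart for your definition of $M$: a cell guaranteed to carry a $Q$-segment of length $L$ must sit at distance roughly $(r+1)L$ from $p_i$ and at distance at least $2\ell/3$ from the other $p_j$, so any limit centered on the new segment sends \emph{all} of $p_1,\dots,p_M$ to infinity, while any limit centered on $p_i$ sends the new segment to infinity. Hence you never exhibit a single configuration of $X$ containing $M+1$ infinite well-separated paths, and ``contradicting the maximality of $M$'' does not follow; almost minimality lets patterns recur near every nonzero cell, but it does not let you ``push the remaining $p_j$ arbitrarily far away'' within one configuration.

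The paper closes precisely this gap by not taking maximality over single configurations at all: it introduces the auxiliary spaces $Y_m$ of $m$-tuples $(x_1,\dots,x_m)$ whose $n\times n$ central patterns co-occur, for every $n$, in the same rows of one configuration of $X$ at separation at least $2n$. Membership in $Y_m$ is a condition on finite patterns and is therefore stable under exactly the limits the argument needs; the components are genuine points of $X$ living ``at infinite separation'' from one another, the maximal $m$ is taken over tuples of preroads in $Y_m$, the tuple is then \emph{fixed} (so no uniformity over a family of witnesses is needed, neither for the width bound nor for the ascension constants), and the extraction of the far-away $Q$-segments yields, in the limit, an $(m+1)$-tuple of preroads in $Y_{m+1}$ --- a contradiction that actually goes through. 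Your ``bounded width'' branch has the same (lesser) defect: since your witnesses $w_\ell$ vary with $\ell$, you need the width bound and the ascension constants to be uniform in $\ell$ before Lemma~\ref{lem:StringLeeway} and a localization limit give a road, which is again something the fixed-tuple formulation gets for free. So the proposal is the right strategy but is missing the key device (the tuple spaces $Y_m$, or an equivalent limit-stable notion of ``$m$ paths at arbitrarily large separation'') that makes the maximality contradiction valid.
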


\begin{proof}
By the previous lemma, $X \subset \Sigma^{\Z^2}$ cannot be the orbit closure of two of these types of configurations. An almost minimal trace sparse subshift is uniformly trace sparse by Lemma~\ref{lem:EventuallyZeroRight}. Suppose the trace is $k$-sparse. 
By the assumption of almost minimality, we only have to find a point of one of these types in $X$. We begin with exactly the same argumentation as in Theorem~\ref{thm:OneDimensionalMainProof}. Take a point $x \in X$ and start building a \froctal{} structure for it by increasing $r$, collecting the $r$-\blabs{} that occur in $x$, and iterating.

If we can continue this for an infinite number of steps, and at each step we obtain finitely many distinct \blabs{} that each contain at least two \blabs{} of the previous size, we get in the limit that $X$ is the orbit-closure of a \froctal{} (the fact that all $B_{i+1}$-\blabs{} contain all $B_i$-\blabs{} is automatic if we increase $r$ fast enough, again due to almost minimality). This process can stop for two reasons. First, it can stop because $B_i$ is a singleton, and all $r_{i+1} > r_i$ only give padded versions of the \blab{} in $B_i$. In this case, $x$ is a finite configuration. In the other case, we have either infinitely many $r$-\blabs{} or a single infinite $r$-\blab{} in $x$, for some $r$. 
In particular, supports of configurations of $X$ contain arbitrarily large $r$-components for some $r$. By Lemma~\ref{lem:MPC}, $X$ then contains a preroad configuration. In the rest of the proof we show that then $X$ is the orbit closure of a highway.

Define the set $Y_m \subset (\Sigma^{\Z^2})^m$ as the set of $m$-tuples of configurations such that for any $n$, the $n \times n$ central patterns of those configurations can be found in the same $n$ rows of a configuration of $X$, separated by at least distance $n$ (but allowing said configuration to contain more nonzero symbols on those rows, and anything outside these $m$ many $(n \times n)$-rectangles). More precisely,
\begin{align*}
(x^1, \ldots, x^m) \in Y_m \iff &\forall n: \exists x \in X: \exists n_1, \ldots, n_m \in \Z: \forall i \neq j: \\
&|n_i - n_j| \geq 2n \wedge \sigma^{(n_i,0)}(x)_{[0,n-1]^2} = (x^i)_{[0,n-1]^2}
\end{align*}
Note that every $Y_m$ is closed under simultaneous translation in all coordinates $\sigma^{\bol v}(x^1, \ldots, x^m) = (\sigma^{\bol v}(x^1), \ldots, \sigma^{\bol v}(x^m))$. It is also closed under horizontal translation of any individual component: $(x^1, \ldots, x^i, \ldots x^m) \in Y_m \implies (x^1, \ldots, \sigma^{(j,0)}(x^i), \ldots x^m) \in Y_m$.

What we have shown is that in $Y_1 = X$, we find an $r$-preroad configuration for some $r$. Let $m$ be maximal such that in $Y_m$ we find an $m$-tuple $(x^1, \ldots, x^m)$ of $r$-preroad configurations.\footnote{The $r$ must be the same, but the ascension constants of the paths can a priori be different.} Naturally, there is an upper bound on $m$ as a function of $r$ and $k$, since all paths go through one of the rows $0, 1, \ldots, r-1$. Now, for each $i \in [1,m]$ fix an ascending path $p_i : \Z \to \Z^2$ whose image is contained in the support of $x^i$. If every nonzero symbol in $x^i$ is at most a bounded distance away from the range of $p$, then $x^i$ is a path configuration by Lemma~\ref{lem:StringLeeway}, and we are done.

Suppose then that $x^i$ is not a path configuration for some $i$, and let $x = x^i$ and $p = p_i$. We claim that $m$ is not maximal. Namely, let $\ell \in \N$, take a finite subpath $q$ of length $\ell$ in $p$, and let $Q$ be the corresponding pattern. Take $h$ such that a translate of $Q$ appears in the $h$-ball around every cell in the support of every configuration in $X$ (by almost minimality). Now, take a cell $\bol v$ in the support of $x$ that is at distance at least $\ell + r\ell + h$ from the central path $p$. Then a translate of $Q$ occurs in $x$ in the $h$-ball around $\bol v$, and thus every coordinate of the translate of $Q$ is at distance at least $\ell$ from the path $p$.

Now, we are in the following situation: The pattern $Q$ appears in $x$ in some position $(a_{m+1}, b)$, and on a nearby row in $x$, there is a nonzero coordinate $(a_i, b) + \bol r^i$ that lies on the central path, where $\bol r^i \in [-r,r]^2$ (because the path is ascending, and thus visits a syndetic set of rows), and $|a_i - a_{m+1}| \geq \ell$. Similarly, the paths $p_j$ in the other points $x^j$ visit some coordinates $(a_j, b) + \bol r^j$ where $\bol r^j \in [-r,r]^2$.

By translating $(x^1, \ldots, x^m)$ by $\sigma^{(a_i, b) + \bol r^i}$, and taking $\ell$ large enough, for any $\ell' \in \N$ we may assume that a translate of $Q$ occurs in the support of $x$ at distance at least $\ell'$ from the origin, visiting some cell $\bol v^{\ell'}$ on one of the rows $0,1,\ldots, r-1$ and extending $\ell'$ steps both below and above row zero, and the central path $p$ goes through the origin of $x$. By applying the horizontal translations $\sigma^{(a_j,0)}$ to the $j$th component for all $j \neq i$, we may assume the path $p_j$ in $x^j$, for all $j \in [1,m]$, visits one of the coordinates in $[-r,r]^2$. 

It follows that for every $\ell'$ we can find a tuple $(y^1, \ldots, y^m) \in Y_m$ where each $y^j$ contains a path that visits one of the cells in $[-r,r]^2$, and in $y^i$, a finite ascending path extends $\ell'$ rows upward and downward from some cell $\bol v^{\ell'}$ arbitrarily far from the origin, but at vertical distance at most $r$ from row zero. As $\ell' \rightarrow \infty$, we then find in the limit a tuple $(z^1, \ldots, z^{m+1}) \in Y_{m+1}$ where all of the $z^i$ are preroads. This is a contradiction with the maximality of $m$, and thus $x$ must have been a road. By Lemma~\ref{lem:RoadIsHighway}, $x$ is a highway.
\end{proof}

\section{Corollaries}

In this section, we give a list of corollaries of Theorem~\ref{thm:WhatminimalCases}. The most general extraction theorem obtained from Theorem~\ref{thm:WhatminimalCases} and Lemma~\ref{lem:Extract} is the following, which we mentioned in the introduction.


\begin{theorem}
\label{thm:MainFindingProof}
Let $X$ be any two-dimensional subshift whose trace $Y$ satisfies one of the following:
\begin{itemize}
\item $Y$ is sparse,
\item $Y$ is countable and the only periodic point in $Y$ is $0^\Z$.
\item every point in $Y$ is eventually zero to the right, or
\end{itemize}
Then $X$ contains a finite point, a highway, or a (trace sparse) \froctal{}.
\end{theorem}

\begin{proof}
In any of the three cases, Lemma~\ref{lem:Extract} applies and we find an almost minimal trace sparse subsystem. The result follows from Theorem~\ref{thm:WhatminimalCases}.
\end{proof}

\subsection{SFTs, sofics and zero-gluing}


\begin{proposition}
Let $X$ be a nontrivial trace sparse $\Z^2$-subshift such that for some~$r$, in the supports of all configurations of $X$ all $r$-components are infinite. Then $X$ contains a highway.
\end{proposition}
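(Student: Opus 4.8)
The plan is to reduce to the almost minimal rowsparse case and then apply Theorem~\ref{thm:WhatminimalCases}. First I would extract a nontrivial subsystem of $X$ that is almost minimal; this is not entirely immediate, since the hypotheses of Lemma~\ref{lem:RowsparseContainsAlmostMinimal} are phrased in terms of the projective subdynamics, whereas here we are given a condition on $r$-components of supports. However, the hypothesis that for some fixed $r$ all $r$-components in all configurations of $X$ are infinite is exactly the negation of "every point of $X$ contains arbitrarily large balls of zeroes'' being false in a strong way — wait, rather the opposite: I would instead observe directly that the only minimal subsystem of $X$ can be $\{0^{\Z^2}\}$. Indeed, if $Z \subseteq X$ is minimal and contains a nonzero configuration, then rowsparseness forces some row of some configuration of $Z$ to be nonzero, hence (by minimality) some nonzero symbol $a$ appears syndetically in every configuration of $Z$ along every line it occupies; but rowsparseness caps the number of nonzero symbols per row, so in fact $a$ must appear in a syndetic set of \emph{rows}, and on each such row only boundedly many cells — this is compatible with an infinite $r$-component only if the nonzero cells line up into a path-like structure, which still has infinite support, contradicting nothing yet. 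The cleanest route: $Z$ rowsparse and minimal and nonzero implies, as in the proof of Lemma~\ref{lem:RowsparseContainsAlmostMinimal}, that $0^{\Z^2}\notin Z$ would mean every configuration of $Z$ has a nonzero symbol in every sufficiently large window, and combined with rowsparseness a compactness/pigeonhole argument produces a configuration of $Z$ with a row of unboundedly many nonzero symbols — contradiction. Hence $0^{\Z^2}$ is the unique minimal subsystem, and by the first lemma of Section~\ref{sec:Extracting}, $X$ contains an almost minimal subsystem $X'$, which is necessarily nontrivial and rowsparse.

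Next I would apply Theorem~\ref{thm:WhatminimalCases} to $X'$: it is the orbit closure of a finite configuration, a highway, or a blob fractal. The key point is to rule out the first and third cases using the hypothesis on $r$-components, which is inherited by $X'$ (any $r$-component in a configuration of $X'$ is an $r$-component in a configuration of $X$, hence infinite). A finite configuration has finite support, so all its $r$-components are finite — this immediately contradicts the hypothesis, since $X'$ is nontrivial so it contains a configuration with a nonempty (hence finite, hence not infinite) $r$-component. For a blob fractal, the defining property gives finite sets $B_i$ of $r_i$-blobs; since each blob in $B_i$ is a \emph{finite} pattern and the support of any configuration in the orbit closure is a limit of $B_j$-blobs, I would argue that for the fixed $r$ in our hypothesis, once $r_i > r$ the $r$-components of the support are contained inside individual $B_i$-blobs (zero-gluing of $B_{i-1}$-translates with separation $r_i > r$ keeps distinct pieces in distinct $r$-components), hence finite — again contradicting the hypothesis. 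I should be a little careful: I must check that a blob fractal genuinely has some \emph{finite} $r$-component, i.e. that the separations $r_i$ do eventually exceed $r$; this follows since $(r_i)$ is increasing and unbounded (it is a strictly increasing sequence of naturals), so some $r_i > r$, and then any nonzero cell of the support lies in an $r$-component contained in a single $B_i$-blob of finite size.

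Therefore $X'$ must be the orbit closure of a highway, so $X'$ contains a highway, and hence so does $X$. The main obstacle I anticipate is the extraction step: making precise why a nontrivial rowsparse subsystem cannot be minimal (so that $\{0^{\Z^2}\}$ is the unique minimal subsystem and the extraction lemma applies). This is essentially the argument already used inside the proof of Lemma~\ref{lem:RowsparseContainsAlmostMinimal} — a minimal nonzero subsystem has a syndetically occurring nonzero symbol, which together with the uniform per-row bound $k$ and a pigeonhole argument over rows yields a configuration with an overfull row — but it needs to be stated cleanly here. Everything else is a short verification that "finite configuration'' and "blob fractal'' are incompatible with the all-$r$-components-infinite hypothesis, while a highway (whose support is a single bi-infinite $r$-path) has exactly one $r$-component, which is infinite, so it is the only surviving case.
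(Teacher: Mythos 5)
Your proposal is correct and follows essentially the same route as the paper: extract a nontrivial almost minimal rowsparse subsystem, apply Theorem~\ref{thm:WhatminimalCases}, and discard the finite-configuration and blob-fractal cases because all their $r$-components are finite, leaving the highway case. Your detour re-proving that $\{0^{\Z^2}\}$ is the unique minimal subsystem is fine but unnecessary, since rowsparseness already puts you in the scope of Lemma~\ref{lem:RowsparseContainsAlmostMinimal} (each row having finitely many nonzero symbols gives zero upper Banach density in every row, hence arbitrarily large balls of zeroes), which is exactly what the paper cites.
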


\begin{proof}
If $X$ is trace sparse, then it contains an almost minimal trace sparse nontrivial subsystem by Lemma~\ref{lem:RowsparseContainsAlmostMinimal}. Let $Y$ be an almost minimal subshift of $X$. Then $Y$ is the orbit closure of a finite configuration, a highway or a \froctal{} by Theorem~\ref{thm:WhatminimalCases}. Only highways have an infinite $r$-component for some~$r$.
\end{proof}

\begin{proposition}
\label{prop:RowsparseZeroGluing}
No trace sparse nontrivial $\Z^2$-subshift $X$ allows zero-gluing.
\end{proposition}

\begin{proof}
As above, in all three cases of the classification theorem, applied to an almost minimal subsystem $Y$ of $X$, we can use zero-gluing to show that the trace is not sparse: For finite and highway configurations, simply glue them to their horizontal translates. For \froctals{}, take an $r_i$-\blab{} for $r_i$ larger than the gluing radius, and the \blab{} must extend to a finite point because we can legally zero-glue it to the all-zero point.
\end{proof}

\begin{theorem}
\label{thm:PavlovSchraudnerProof}
No nontrivial two-dimensional SFT is trace sparse.
\end{theorem}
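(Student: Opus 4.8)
The plan is to deduce this from Theorem~\ref{thm:PavlovSchraudnerProof}'s natural ingredients, namely the classification Theorem~\ref{thm:WhatminimalCases} together with the observation (used in the previous proposition) that every SFT allows zero-gluing. First I would reduce to the almost minimal case: if $X$ is a nontrivial two-dimensional SFT which is rowsparse, then since $X$ is in particular a subshift whose projective subdynamics is sparse, Lemma~\ref{lem:RowsparseContainsAlmostMinimal} (or Lemma~\ref{lem:Extract}) lets us extract a nontrivial almost minimal rowsparse subsystem $Y \subseteq X$. (Here I should double check that the subsystem is nontrivial, i.e. contains a nonzero point — this holds because $X$ is nontrivial and contains $0^{\Z^2}$, so it contains a configuration with a nonzero symbol, and the extraction procedure keeps a nonzero symbol alive.)

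Next, I would invoke Theorem~\ref{thm:WhatminimalCases}: $Y$ is the orbit closure of a finite configuration, a highway, or a blob fractal. In each of these three cases I want to produce, inside the ambient SFT $X$, configurations witnessing non-sparse (indeed positive-entropy) projective subdynamics, contradicting the assumption that $X$ is rowsparse. The key tool is that every SFT allows zero-gluing: there is an $r$ such that the zero-gluing of any two $r$-padded patterns occurring in $X$ again occurs in $X$. For a finite configuration in $Y$, take the $r$-padded finite pattern it determines; we may freely place horizontal translates of this blob at arbitrary (sufficiently large) horizontal offsets, all on the same rows, and zero-glue them, producing configurations of $X$ with arbitrarily many nonzero symbols on a single row — contradicting rowsparseness, and in fact giving positive entropy by choosing offsets in a binary fashion. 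For a highway, do the same with a large $r$-padded finite subpattern of the highway: its horizontal translates can be zero-glued along the same rows. For a blob fractal, pick $i$ with $r_i$ larger than the gluing radius $r$; then an $r_i$-blob $B\in B_i$ is an $r$-padded pattern occurring in $X$, and we may zero-glue arbitrarily many horizontal translates of $B$ (and of the all-zero pattern), again forcing non-sparse projective subdynamics.

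The main obstacle is bookkeeping in the blob-fractal case, ensuring the $r_i$-blob really occurs in the ambient SFT $X$ and not merely in the subsystem $Y$ — but this is immediate since $Y\subseteq X$, so any pattern occurring in $Y$ occurs in $X$. A second minor point is confirming that each of the glued configurations genuinely lies in $X$: this is exactly what "allows zero-gluing" provides for $r$-padded patterns, and finite configurations, finite subpatterns of highways, and $r_i$-blobs with $r_i \ge r$ are all $r$-padded (or can be taken so by enlarging their domains with zeros), so the hypothesis applies. Putting these together: in every case $X$ has non-rowsparse (positive-entropy) projective subdynamics, contradicting the assumption; hence no nontrivial two-dimensional SFT is rowsparse. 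I would remark that this argument is essentially the one already sketched for the preceding proposition on zero-gluing, specialized to the SFT case, and indeed Theorem~\ref{thm:PavlovSchraudnerProof} could alternatively be cited as an immediate corollary of that proposition together with the fact that every SFT allows zero-gluing.
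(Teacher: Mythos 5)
Your overall route is exactly the paper's: the paper proves this theorem in one line by combining the preceding proposition (no rowsparse nontrivial subshift allows zero-gluing) with the observation that every SFT allows zero-gluing, and that proposition is proved precisely by extracting an almost minimal rowsparse subsystem, invoking Theorem~\ref{thm:WhatminimalCases}, and zero-gluing translates in each of the three cases. So the strategy matches; the issue is with one step of your case analysis. In the highway case you propose to glue ``a large $r$-padded finite subpattern of the highway,'' adding that the domain ``can be taken so by enlarging with zeros.'' This step fails as stated: the support of a highway is an infinite ascending path, so any finite window containing part of it has the path exiting the window, and hence cells within distance $r$ of the support lying outside the domain --- the restriction is never $r$-padded. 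Enlarging the domain and filling with zeros does not help, because the resulting pattern (a path segment that simply stops) is not known to occur in $X$; indeed, whether such ``terminated path'' patterns occur is essentially the point at issue. The paper avoids this by gluing entire highway configurations (domain $\Z^2$, trivially $r$-padded) to their horizontal translates; this is legitimate because a highway has bounded-width rows, so for a large enough horizontal offset the two supports are disjoint, and since horizontal translates occupy exactly the same set of rows, gluing infinitely many of them puts infinitely many nonzero symbols on a single row.

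A second, smaller point: producing configurations of $X$ with \emph{arbitrarily many} nonzero symbols on a row does not literally contradict rowsparseness, which only demands that each individual configuration have finitely many nonzero symbols per row. You need one configuration with infinitely many nonzero symbols on a single row, obtained either by gluing infinitely many translates and using closedness of $X$ (a configuration all of whose finite patterns occur in $X$ lies in $X$), or, as the paper does, by noting that a sparse projective subdynamics is necessarily countable and using the gluing to produce uncountably many rows. With these repairs (and your finite-point and blob-fractal cases, which are fine since those patterns genuinely are $r$-padded and occur in $X \supseteq Y$), the argument goes through and coincides with the paper's.
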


\begin{proof}
An SFT clearly allows zero-gluing.
\end{proof}

\begin{proposition}
Let $X$ be a two-dimensional sofic shift with SFT cover $Y$ such that the only preimage of $0^{\Z^2}$ is $0^{\Z^2}$. Then $X$ does not have a sparse trace.
\end{proposition}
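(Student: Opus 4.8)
The plan is to reduce to Theorem~\ref{thm:PavlovSchraudnerProof}: under the hypothesis, rowsparsity of $X$ would propagate to the SFT cover $Y$, which is impossible for a nontrivial SFT. As in the surrounding results we may assume $X$ is nontrivial, and we write $\pi : Y \to X$ for the factor map, so that $\pi^{-1}(0^{\Z^2}) = \{0^{\Z^2}\}$.

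The main step is a compactness argument showing that large regions of zeroes in a configuration of $X$ force zeroes in every $\pi$-preimage. The set $W = \{y \in Y : y_0 \neq 0\}$ is clopen, hence compact, so $\pi(W)$ is closed in $X$; moreover $0^{\Z^2} \notin \pi(W)$, since the only preimage of $0^{\Z^2}$ is $0^{\Z^2}$ itself, which does not lie in $W$. The cylinders $C_n = \{x \in X : x|_{B_n(0)} = 0^{B_n(0)}\}$ form a decreasing family of closed subsets of $X$ with $\bigcap_n C_n = \{0^{\Z^2}\}$, so by compactness $C_{n_0} \cap \pi(W) = \emptyset$ for some $n_0$. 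By shift-commutativity of $\pi$ this says: for every $x \in X$, every $v \in \Z^2$ and every $y \in \pi^{-1}(x)$, if $x|_{B_{n_0}(v)}$ is all zero then $y_v = 0$. Equivalently, $\supp(y) \subseteq B_{n_0}(\supp(x))$ for all $y \in \pi^{-1}(x)$.

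Now suppose $X$ has sparse projective subdynamics, i.e.\ $X$ is rowsparse. Fix $x \in X$ and $y \in \pi^{-1}(x)$, and let $j$ be a row index. By the inclusion above, the support of row $j$ of $y$ is contained in the $n_0$-neighbourhood, taken inside that row, of $\bigcup_{|j'-j| \le n_0} S_{j'}$, where $S_{j'}$ is the support of row $j'$ of $x$, which is finite by rowsparsity of $x$. This is a finite set dilated by a bounded amount, hence finite. So every row of $y$ has finite support, i.e.\ $Y$ is rowsparse. But $Y$ is a two-dimensional SFT, and it is nontrivial because $\pi(Y) = X$ is nontrivial, so this contradicts Theorem~\ref{thm:PavlovSchraudnerProof}.

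The only genuinely nontrivial point is the extraction of the uniform radius $n_0$ from the hypothesis that $0^{\Z^2}$ has no preimage other than itself; everything afterwards is routine bookkeeping with supports. (Alternatively one could argue that $X$ itself allows zero-gluing — large balls of zeroes in $X$ pull back to large balls of zeroes in the SFT $Y$, so a zero-gluing of two preimages performed inside $Y$ projects to a zero-gluing in $X$ — and then invoke the proposition that no nontrivial rowsparse subshift allows zero-gluing; but routing the argument through $Y$ is shorter.)
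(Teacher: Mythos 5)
Your argument is correct, and it is a legitimate (if closely related) variant of the paper's proof rather than a reproduction of it. Both proofs hinge on the same key fact, which the paper states in one clause (``the only preimage of a large ball of zeroes is necessarily a large ball of zeroes'') and which you prove carefully via the clopen set $W$, the cylinders $C_n$, and compactness to extract the uniform radius $n_0$; making that uniformity explicit is exactly the right move, since the paper leaves it implicit. Where you diverge is the direction in which the fact is exploited: the paper transfers the zero-gluing property \emph{down} from the SFT cover $Y$ to $X$ (so that $X$ allows zero-gluing, with a gluing radius absorbing both $n_0$ and the radius of the factor map) and then invokes the proposition that no nontrivial rowsparse subshift allows zero-gluing; you instead transfer rowsparsity \emph{up} from $X$ to $Y$ via $\supp(y) \subseteq B_{n_0}(\supp(x))$ and invoke Theorem~\ref{thm:PavlovSchraudnerProof} directly. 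Since that theorem is itself a corollary of the zero-gluing proposition, the two routes bottom out in the same machinery, but yours trades the factor-map radius bookkeeping needed to verify zero-gluing of $X$ for the (easier) observation that an $n_0$-neighbourhood of a rowsparse support is rowsparse -- a mild simplification, and you correctly flag the paper's route as the alternative in your closing remark. The only conventions you rely on (nontriviality of $X$, hence of $Y$, and that $0^{\Z^2}\in Y$) are the same implicit assumptions the paper makes in the surrounding results, so there is no gap.
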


\begin{proof}
Since the only preimage of a large ball of zeroes in the covering map is necessarily a large ball of zeroes, such a sofic shift allows zero-gluing.
\end{proof}

General sofic shifts can have sparse traces. See Section~\ref{sec:Examples}.

A $\Z$-subshift has \emph{universal period $n$} ($n \in \N, n \geq 1$) if there exists a bound $M \in \N$ such that for every point $x \in X$ there is a finite set $F_x \subset \Z$ of coordinates with $|F_x| \leq M$ and a periodic point $y \in X$ with $\sigma^n(y) = y$ such that $x|_{\Z \setminus F_x} = y|_{\Z \setminus F_x}$. We say it has \emph{universal period} if it has universal period $n$ for some $n$. (see Definition~4.3 in \cite{PaSc15}) 

The following is Theorem~6.4 in \cite{PaSc15}, and it is the non-implementability part of their characterization of zero-entropy sofic traces of two-dimensional subshifts of finite type.


\begin{theorem}
\label{thm:FullPavlovSchraudnerProof}
If a $\Z$-subshift $Y$ has a universal period and is not a finite union of periodic points, then it is not the trace of any $\Z^2$-SFT $X$.
\end{theorem}



\begin{proof}
Suppose $Y$ is the trace of a $\Z^2$-SFT $X$. Consider the \emph{blocking} of the subshift $X$, defined as the subshift with space $X$ and $\Z^2$-dynamics $\gamma^{(a,b)}(x) = \sigma^{(na,b)}(x)$. Picking $n$ to be the universal period of $Y$, this turns $Y$ into a finite union of sparse subshifts $Y_1, ..., Y_m$. Since there is a uniform bound on the number of coordinates where a point $y \in Y$ can differ from a unary point, we may assume that the zero symbol of $Y_i$ cannot appear in $Y_j$ for $j \neq i$.

Thus, we may assume that $X$ is an SFT whose trace is a finite union of sparse subshifts $Y_1, ..., Y_m$ with the above disjointness property on zero-symbols. Let $Z$ be a factor of $X$ obtained by mapping all the distinct zero symbols on every row of $x \in X$ to the same symbol $0$. Note that no non-unary row is mapped to $0^\Z$. Now, $Z$ is clearly trace sparse and nontrivial, so by Lemma~\ref{lem:RowsparseContainsAlmostMinimal}, it contains an almost minimal subsystem $W$.


By Theorem~\ref{thm:WhatminimalCases}, $W$ contains either a finite configuration, a highway configuration or a \froctal{}. If $z \in Z$ is finite or a highway, then clearly the zero-gluing $\sum_{i \in \Z} \sigma^{(ik,0)}(z)$ is in $X$ for some $k$, since we can simply glue the preimages of $z$ together as on each row they use the same preimage of the zero-symbol on the left and right side of the support of $z$. This contradicts the assumption that the original subshift had a universal period.

If $z$ is a \froctal{}, then pick any covering point $x \in X$ and some $r$-\blab{} $b$ in $x$ with $r$ larger than the maximal size of a forbidden pattern of the SFT, and replace every cell outside $b$ that is not one of the zero symbols with the zero-symbol used on that row of $x$. 
This configuration is legal, and all but finitely many of its rows are periodic. As in the previous paragraph, we can glue together a legal configuration contradicting the universal period.
\end{proof}

More generally, we obtain that there are singly periodic points in all SFTs whose traces are countable sofic shifts. We say a $\Z$-subshift $Y$ is \emph{bounded} if its language is bounded, where a bounded language is any sublanguage of $w_1^* w_2^* \cdots w_{\ell}^*$ for words $w_i$. In \cite{Sa14} it is shown that all countable sofic shifts are bounded. A \emph{singly periodic} configuration is a configuration with a nontrivial period whose orbit in the shift action is infinite. The following proposition should be constrasted with \cite[Theorem~3.11]{BaDuJe08}, where it is shown that all countably infinite two-dimensional SFTs contain a singly periodic point.


A configuration $x \in \Sigma^{\Z^2}$ is \emph{(totally) periodic} if $\{\bol u \in \Z^2 \;|\; \sigma^{\bol u}(x) = x\}$ spans $\R^2$ as an $\R$-vector space. We say a point $x \in \Sigma^{\Z^2}$ is \emph{asymptotic to a periodic point} if there exists a totally periodic $y \in \Sigma^{\Z^2}$ such that $x_{\bol v} = y_{\bol v}$ for all but finitely many $\bol v$. We say $x$ is \emph{strictly asymptotic to a periodic point} if it is asymptotic to a periodic point but is not periodic.
 The following lemma is easy to prove.

\begin{lemma}
\label{lem:EPUncountablePS}
Suppose $X \subset \Sigma^{\Z^2}$ is an SFT containing a point that is strictly asymptotic to a periodic point. Then the trace of $X$ is uncountable.
\end{lemma}


\begin{proposition}
\label{prop:TraceBoundedSinglyPeriodic}
If $X \subset \Sigma^{\Z^2}$ is an SFT whose trace is a bounded infinite subshift, then it contains a singly periodic point. 
\end{proposition}

\begin{proof}
Suppose $X$ is an SFT whose trace is a bounded infinite subshift, and suppose it contains no singly periodic point. First, we show that $X$ contains only finitely many points which have a horizontal period: The trace of $X$ contains only finitely many periodic points. Considering the periodic rows as a finite alphabet, the set of points with all rows periodic is an SFT under the vertical action. If this one-dimensional SFT were infinite, it would contain an eventually periodic aperiodic point, giving a singly periodic point in $X$. Thus this one-dimensional SFT is finite, implying that indeed there are only finitely many points with a horizontal period.

Let now $n$ be such that every point with some horizontal period has horizontal and vertical period dividing $n$, and suppose that $X$ is defined by forbidden patterns of size at most $n$-by-$n$. Suppose $0 \notin \Sigma$ and consider the subshift $Z$ obtained from $X$ as the image of the factor map $f : X \to (\Sigma \cup \{0\})^{\Z^2}$ defined by $f(x)_{\bol v} = 0$ if $x_{\bol v + (a, b)} = x_{\bol v + (a + n, b)} = x_{\bol v + (a, b + n)} = x_{\bol v + (a + n, b + n)}$ for all $0 \leq a, b < n$, and $f(x)_{\bol v} = x_{\bol v}$ otherwise. The trace of $Z$ is then sparse. By the assumption on the trace of $X$, the trace of $Z$ is not $\{0^\Z\}$. We get that $Z$ contains a nontrivial finite point, a highway or a \froctal{}.

If $Z$ contains a highway $z$, then from a covering configuration in $X$ of $z$, we obtain a singly periodic point 
in $X$ by the pigeonhole principle, since the preimage of a finite point in the trace of $Z$ is clearly an eventually periodic point in the trace of $X$, and there are only finitely many points with a given period $n$ for the left and right tails, and with a given central pattern length.


For a nontrivial finite configuration we can find a covering point of the following form: for some $m$, $x_{\bol v} = x_{\bol v + (n,0)} = x_{\bol v + (0,n)} = x_{\bol v + (n,n)}$ for all $|\bol v| \geq m$, but $x$ is not periodic. But then $x$ is strictly asymptotic to a periodic point, a contradiction with Lemma~\ref{lem:EPUncountablePS}. 



Now, let $z$ be a \froctal{} in $Z$ and let $x \in X$ be a covering configuration. We claim that it is enough to show that around any $1$-component of the support of $z$ (note that for the definition of $1$-components we use the \emph{king grid}, that is, the graph whose path metric is the $\ell^\infty$-metric $d$), we can draw a path along zeroes around it with movements in cardinal directions only. More precisely, that we can find a simple cycle $p$ in the \emph{standard grid} $(\Z^2, \{(u,v) \;|\; |u_1 - v_1| + |u_2 - v_2| = 1\})$ whose interior (as a Jordan curve) contains the component, and $z_{p(i)} = 0$ for all $i$.

To see that this suffices, once we have such a path, by the assumption on the factor map and on $n$, throughout this cycle, we see a horizontal and vertical period of $n$ in $x$, meaning that the unique point $y$ in $X$ with horizontal and vertical period $n$ that agrees with $x$ in some $2n$-by-$2n$ square with bottom left corner on the path $p$ agrees with $x$ in all the $2n$-by-$2n$ blocks on the path $p$. If $b$ is the \blab{} corresponding to the component, then by the assumption that its support is contained in the interior of $p$, we can ignore the configuration outside $b$, and continue the period to obtain a point strictly asymptotic to the periodic point $y$, which is a contradiction by Lemma~\ref{lem:EPUncountablePS}.

For the existence of the path, consider the family of counterclockwise cycles in the standard grid, whose interior contains the support of $b$. One such Jordan curve exists because $b$ is finite, and thus there exists such a curve $p$ of minimal area. A simple case analysis shows that every cell $p(i)$ is in the king grid neighborhood of the support of $b$, or we could decrease the area of the curve by cycling around more internal nodes.\footnote{For a more rigorous proof, one needs that the two sides of a curve are really on different sides, so that always we have an internal side to decrease in area (and that we have a notion of area at all). The polygon version of the Jordan curve theorem suffices for this \cite[Lemma~1]{Tv80}.}
\end{proof}

Of course this implies also that two-dimensional SFTs with bounded trace contain doubly periodic points, but this has an easier direct proof by passing to a minimal subsystem.

\subsection{Subshifts with deterministic or expansive directions}

SFTness can often be replaced by determinism. The main thing we need is that if $\bol v$ is a deterministic direction, then a half-plane in direction\footnote{Recall that by this we mean that the boundary is perpendicular to $\bol v$ -- we think of the half-plane as moving in direction $\bol v$ and eating up the configuration.} $\bol v$ containing only $0$s (or other doubly periodic content), can only be continued periodically. If $X$ contains a configuration contradicting this, we say \emph{something appears from nothing}. The \emph{vertical deterministic directions} are $\{r(0,1), r(0,-1) \;|\; r \in \R_+\}$.


\begin{proposition}
\label{prop:UpwardDeterminism}
If $X \subset \Sigma^{\Z^2}$ is a subshift with a vertical deterministic direction whose trace is a bounded infinite subshift, then it contains a singly periodic point. 
\end{proposition}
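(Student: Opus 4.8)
The plan is to mimic the proof of the SFT version (Proposition about SFTs with bounded infinite projective subdynamics), replacing the appeal to SFTness by the hypothesis that the vertical direction is deterministic together with the observation that this forbids ``something appearing from nothing''. First I would take $n$ to be the least common multiple of the lengths of the words $w_i$ witnessing boundedness of the projective subdynamics $Y$, and perform the same blocking-and-factoring construction: block the $\Z^2$-action by $n$ in the horizontal direction, and collapse all the distinct ``zero symbols'' (the periodic backgrounds of the bounded subshift) on each row to a single symbol $0$, so that the resulting subshift $Z$ has sparse projective subdynamics but is still nontrivial. The key point to check is that determinism in the vertical direction is preserved by blocking (clearly, since blocking only reparametrises the horizontal axis and leaves the vertical half-plane structure intact) and is not destroyed by the factor map. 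The latter needs a small argument: the factor map is applied row-by-row with a memoryless recoding, so a vertical half-plane of $Z$ lifts (non-uniquely) to a vertical half-plane family of $X$, but since only uniformly boundedly many cells of each row of $Y$ differ from the periodic background, the fibre over a half-plane of $Z$ is finite, and determinism of $X$ on each of these lifts forces the continuation of the half-plane in $Z$ to be unique as well. Hence $Z$ is rowsparse, nontrivial, and vertically deterministic.

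Next I would apply Lemma~\ref{lem:RowsparseContainsAlmostMinimal} to extract an almost minimal subsystem $W \subset Z$, and then Theorem~\ref{thm:WhatminimalCases} to conclude that $W$ is the orbit closure of a finite configuration, a highway, or a blob fractal. I then rule out or exploit each case exactly as in the SFT proof, but using vertical determinism in place of gluing. If $W$ contains a finite configuration $z$, its support lies in finitely many rows; outside a bounded horizontal strip the configuration is a (collapsed) zero background, and the covering configuration $x \in X$ over $z$ is doubly periodic outside a bounded strip. By vertical determinism, ``nothing appears from nothing'': a vertically periodic configuration above and below the strip forces the whole configuration to be (eventually) vertically periodic, and then by compactness / pigeonhole one obtains a singly periodic point of $X$ — but an honest finite point cannot occur here since the original $Y$ is infinite, so in fact we land on a genuine singly periodic point (this is the contradiction that also kills the blob fractal case, since from a blob fractal one produces a finite configuration of $Z$ as in the SFT proof, hence a finite configuration in $X$, contradiction).

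If $W$ contains a highway $z$, the covering configuration $x \in X$ has a preimage of a finite point of $Y$ on each row; outside the support of the ascending path $x$ is vertically periodic, and by vertical determinism the region to the left (and right) of the path is continued periodically. Following the path upward, the finite ``central pattern'' together with the fixed left/right periods can take only finitely many values, so by the pigeonhole principle some configuration repeats after a bounded number of steps along the path, yielding a vertical period; this gives a singly periodic point of $X$. (The orbit is infinite because the period is nontrivial — the path genuinely moves horizontally.) The main obstacle I anticipate is the careful handling of the factoring step: one must verify that collapsing zero symbols genuinely preserves the deterministic direction, i.e. that the finite fibre of the factor map over a periodic vertical half-plane does not allow two distinct continuations in $Z$ — this uses that the fibre is finite and that each lift is uniquely continued in $X$. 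Everything else is a routine transcription of the SFT argument with ``zero-gluing'' replaced by ``something does not appear from nothing''.
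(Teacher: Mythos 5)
Your overall route is the paper's: block and factor so that the projective subdynamics becomes sparse, extract an almost minimal subsystem via Lemma~\ref{lem:RowsparseContainsAlmostMinimal}, apply the classification of Theorem~\ref{thm:WhatminimalCases}, and replace SFT gluing by vertical determinism in the case analysis; your highway case, which is where the singly periodic point actually comes from, is in the right spirit (determinism is given by a local rule by compactness, the rows away from the path range over finitely many $n$-periodic backgrounds, and pigeonhole plus the local rule forces an eventually periodic continuation along the path, whence a singly periodic point). However, two of your steps fail as written. The claim you single out as the key point --- that the collapsed subshift $Z$ inherits the vertical deterministic direction --- is not established by your fibre argument and is dubious in general: finiteness of the fibre over a half-plane of $Z$ does not make the projections of the continuations of \emph{distinct} lifts coincide, and a ``particle'' whose motion depends on which periodic background it sits on gives a $Z$ that is not vertically deterministic even though $X$ is. Fortunately this claim is never used: like the paper, the remainder of your argument only applies determinism of $X$ to covering configurations, so this step should simply be dropped.

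The second problem is the blob-fractal case. The step ``from a blob fractal one produces a finite configuration of $Z$ as in the SFT proof'' relies on SFT gluing (cutting out a blob and extending by periodic background), which is precisely what is unavailable here; and even granting a finite configuration of $Z$, the conclusion ``hence a finite configuration in $X$'' is false --- a finite point of $Z$ lifts to a configuration of $X$ whose rows are nonzero periodic backgrounds, which is exactly why the SFT proof needs its separate compactness argument in that case. The paper disposes of both the finite and the blob-fractal cases directly with determinism: in the orbit closure one finds a half-plane of zeroes (periodic background rows in the cover) above whose boundary a non-background row appears, and since there are only finitely many $n$-periodic background rows, some strip of consecutive rows repeats and the local rule forces the continuation to be vertically periodic, so ``something appears from nothing'', a contradiction. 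Your finite-configuration discussion gestures at this but asserts without justification that the cover is doubly periodic outside a strip, and then aims to extract a singly periodic point where the correct conclusion is that the case is impossible. So the skeleton matches the paper, but the finite and blob-fractal cases need to be redone along these lines.
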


\begin{proof}
Suppose $X$ is deterministic upward.

The first two paragraphs of the proof of the previous proposition go through with minor modifications, in the first observing that a one-dimensional subshift with a deterministic direction is an SFT, and in the second picking $n$ to be the radius of the local rule of upward determinism instead of the SFT window. Let $Z$ be as in that proof.

Let now $x \in X$ be a covering configuration of a highway configuration $z \in Z$. Determinism means that a lower half-plane containing half of the path will have a unique extension to a configuration of $X$. Since the determinism is given by a local rule, in fact a finite number $\ell$ of rows already determines the next row upward. Since every row of $x$ is eventually periodic with uniformly bounded preperiod (by the definition of a highway, and the definition of the factor $Z$), there are, up to shifting, only finitely many $\ell$-tuples of consecutive rows. It follows from the pigeonhole principle that the direction of the path is rational, and thus $x$ is singly periodic.

There cannot be any finite configurations in $Z$ because otherwise something appears from nothing in $X$. Similar reasoning applies to \froctals{}: in the orbit closure of a \froctal{}, for any direction $\bol v \in \R^2 \setminus \{\bol 0\}$ (in particular for $\bol v = (0,1)$) one can find a half-plane in direction $\bol v$ of all zeroes whose boundary contains a nonzero symbol, and again something appears from nothing in $X$.
\end{proof}

When the deterministic direction is not vertical, $X$ need not contain a singly periodic point. See Example~\ref{ex:BoundedDeterministicSubshift} in Section~\ref{sec:Examples}. However, we will show next that in the trace sparse case, any direction of determinism implies the existence of a singly periodic point. A much studied concept in multidimensional dynamics are the directions of expansivity \cite{BoLi97}, a concept similar to determinism. We say $\bol v \in \R^2 \setminus \{\bol 0\}$ is an \emph{expansive direction}\footnote{Of course, a more natural way to define directional expansivity is to talk about expansive subspaces of $\R^2$, but for the purpose of our discussion, we find directions notationally easier.} if, defining $L_{\bol v,r} = \R \bol v + B_r(0) \subset \R^2$, we have
\[ \exists \epsilon > 0: \exists r > 0: (\forall \bol u \in L_{\bol v,r} \cap \Z^2: d(\sigma^{\bol u}(x), \sigma^{\bol u}(y)) < \epsilon) \implies x = y. \]
Determinism and expansivity are connected by the following well-known lemma.

\begin{lemma}
Let $X$ be a two-dimensional subshift. Then $\bol v$ is an expansive direction for $X$ if and only if both of the directions orthogonal to $\bol v$ are deterministic.
\end{lemma}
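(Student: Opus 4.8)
The plan is to convert the metric definition of expansivity into a purely combinatorial one and then match infinite strips against half-planes. Since $\Sigma$ is finite, for small $\epsilon$ the inequality $d(\sigma_v(x),\sigma_v(y))<\epsilon$ holds precisely when $x$ and $y$ agree on the ball $B_{r_0}(v)$ for an appropriate $r_0$; consequently $d$ is an expansive direction for $X$ if and only if there is a radius $R$ such that any two points of $X$ that agree on $\Z^2\cap L_{d,R}$ are equal. Fix a unit vector $\hat e\perp d$ and write $h(v)=\langle v,\hat e\rangle$. The two directions orthogonal to $d$ are $\hat e$ and $-\hat e$, and $\Z^2\cap L_{d,R}=\{v:|h(v)|\le R\}$ is the intersection of the half-plane $\{h(v)\le R\}$, which is a half-plane in direction $\hat e$, with the half-plane $\{h(v)\ge -R\}$, which is a half-plane in direction $-\hat e$.

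For the forward implication, assume $d$ is expansive with constant $R$ and suppose one of the two orthogonal directions, say $\hat e$, fails to be deterministic. Then there are distinct $x,y\in X$ that agree on a half-plane $H$ in direction $\hat e$, which after a translation we may take to be $\{h(v)\le 0\}$. The set $\{h(v):v\in\Z^2\}$ is a nontrivial subgroup of $\R$, hence unbounded below, so we may choose $w\in\Z^2$ with $h(w)\le -R$. Then $\sigma^{w}(x)$ and $\sigma^{w}(y)$ agree on $H-w\supseteq\{h(v)\le R\}\supseteq\Z^2\cap L_{d,R}$ while still being distinct, contradicting expansivity. The case $-\hat e$ is symmetric, so both orthogonal directions are deterministic.

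For the converse, the key step — which I expect to be the main obstacle — is a finite-window reformulation of determinism: if $X$ is deterministic in direction $\hat e$, then there is $\ell\in\N$ such that, for every $x,y\in X$ and every $t\in\R$, agreement of $x$ and $y$ on the slab $\{v:t-\ell<h(v)\le t\}$ forces their agreement on $\{v:t<h(v)\le t+1\}$. I would prove this by a compactness argument: were it false, then for each $\ell$ there would be a pair of points of $X$ agreeing on a width-$\ell$ slab but differing at some cell just beyond it; translating so that this cell lies at the origin and passing to a subsequential limit would produce two distinct points of $X$ agreeing on an entire half-plane in direction $\hat e$, contradicting determinism. The delicate point here is that $d$, and hence the slab direction, may be irrational, so no nonzero lattice vector preserves a slab; this is precisely why one translates the differing cell to the origin instead of sliding the slab, and the argument is then insensitive to rationality of $d$.

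Granting the finite-window property, iterating it shows that agreement on any width-$\ell$ slab propagates to agreement on the whole half-space lying on one side of it. Applying this to both orthogonal directions, with constants $\ell_1$ for $\hat e$ and $\ell_2$ for $-\hat e$, and setting $R=\max(\ell_1,\ell_2)$: if $x,y\in X$ agree on $\Z^2\cap L_{d,R}$, then they agree on the slab $\{v:R-\ell_1<h(v)\le R\}\subseteq \Z^2\cap L_{d,R}$, hence on $\{h(v)>R\}$; likewise they agree on $\{v:R-\ell_2<-h(v)\le R\}\subseteq \Z^2\cap L_{d,R}$, hence on $\{h(v)<-R\}$; together with agreement on $\Z^2\cap L_{d,R}$ itself, this is all of $\Z^2$, so $x=y$. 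Thus $d$ is an expansive direction. Since $\hat e$ and $-\hat e$ entered the argument symmetrically, determinism of both is genuinely needed, which gives the stated equivalence.
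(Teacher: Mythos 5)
Your proof is correct, and its first half is the paper's argument in direct form: from a failure of determinism in an orthogonal direction you translate the half-plane of agreement far enough that it swallows the strip $L_{d,R}$, exactly the paper's ``thick strips around the boundary of the half-plane'' step. Where you diverge is the converse. The paper argues contrapositively in one shot: assuming $d$ is non-expansive, it takes, for each $r$, two distinct points agreeing on $L_{d,r}$ with a difference on one side of the strip, translates that difference to the origin, and in the limit $r\to\infty$ obtains two distinct points agreeing on a half-plane orthogonal to $d$, contradicting determinism of one of the two orthogonal directions. You instead prove the implication directly, via an intermediate quantitative lemma -- a uniform propagation radius $\ell$ such that agreement on a width-$\ell$ slab forces agreement on the next unit slab -- established by the same compactness trick (translate the offending cell to the origin, pass to a limit, contradict half-plane injectivity), and then you iterate the propagation across slabs on both sides of $L_{d,R}$. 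The two routes are close cousins, since the limit argument is identical in substance; your packaging costs an extra induction but buys an explicit finite-window form of determinism (a uniform radius of propagation), which is a reusable statement, whereas the paper's contrapositive is shorter and needs only to decide, by pigeonhole in the limit, on which side of the strips the disagreements accumulate. Your handling of the delicate point -- that for irrational $d$ no lattice translation preserves a slab, so one must recenter at the differing cell rather than slide the slab -- is exactly right and is implicitly what the paper's ``translating this difference to the origin'' does as well.
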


\begin{proof}
Let $D$ be the set of deterministic directions, and $E$ the set of expansive ones (which is of course symmetric). We will show $\bol v \notin D \implies (\bol v^T \notin E \wedge -\bol v^T \notin E)$ and $\bol v \notin E \implies (\bol v^T \notin D \vee -\bol v^T \notin D)$, from which the lemma follows easily.

If $\bol v$ is a direction of nondeterminism, then we have two points $x,y$ that agree on a $\bol v$-directional half-plane, but not everywhere, and then thick strips around the boundary of this half-plane imply the directions orthogonal to $\bol v$ are non-expansive. Conversely, if $\bol v$ is a direction of nonexpansivity, then we have arbitrarily thick strips $L_{\bol v,r}$ that extend in two ways in at least one of the directions. Translating this difference to the origin, in the limit as $r \rightarrow \infty$ we obtain two points that agree on a half-plane in one of the two directions orthogonal to $\bol v$, but not everywhere.
\end{proof}


In \cite{BoLi97}, $\Z^2$-subshifts with almost any\footnote{Up to natural topological restrictions.} set of expansive directions are constructed, the only exception being a single irrational expansive direction, which was left open. Since a rational line in direction $\bol v$ will have $\bol v$ as the unique nonexpansive direction, the obvious way to try to solve this is a single irrational line. The next result, Proposition~\ref{prop:DeterministicContainsSinglyPeriodic}, shows that such an idea cannot work. 
See \cite{Ho11} for a correct implementation of a unique nonexpansive irrational direction.

We need the following well-known fact, which can be shown analogously as the closedness of nonexpansive directions in \cite{BoLi97}. 

\begin{lemma}
Let $X \subset \Sigma^{\Z^2}$ be a subshift. Then the set of nondeterministic directions (scaled onto the unit circle) is closed in the unit circle.
\end{lemma}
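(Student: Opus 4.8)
The statement to prove is that the set of nondeterministic directions of a two-dimensional subshift $X$ is closed in the unit circle. I will prove this by a compactness argument that is the direct analogue of the standard proof that the set of expansive directions is open (equivalently, that nonexpansive directions form a closed set), as indicated in the excerpt.

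\textbf{Plan of the proof.} It suffices to show that the set $D$ of deterministic directions is open. Fix a deterministic direction $d_0$. Recall that $d_0$ being deterministic means that for every half-plane $H$ in direction $d_0$, the restriction map $x \mapsto x|_{H \cap \Z^2}$ is injective on $X$. First I would reformulate this via a finite local rule: by compactness of $X$, determinism in direction $d_0$ is equivalent to the existence of a radius $r_0$ and a ``prediction window'' such that the contents of $X$ on a half-plane in direction $d_0$ determine, in a uniformly bounded neighborhood $B_{r_0}$ of the boundary, the symbol just across the boundary. Concretely: there is $r_0$ such that if $x,y \in X$ agree on $(H - r_0 d_0) \cap \Z^2$ for a half-plane $H$ in direction $d_0$, then $x$ and $y$ agree on the first ``layer'' of cells of $\Z^2$ just outside $H$ on the $d_0$ side (the cells $v \notin H$ at bounded distance from $\partial H$). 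If no such $r_0$ existed, a standard compactness/diagonalization argument would produce two points agreeing on a full half-plane in direction $d_0$ but differing across it, contradicting determinism.

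\textbf{Key steps.} (1) Establish the ``bounded local rule'' characterization of determinism above. (2) Observe that whether a given finite local rule (a relation on patterns on $B_R$ for fixed $R$) holds for $X$ depends only on finitely many forbidden/allowed patterns, and the geometry of ``which cells of $B_R$ lie in a half-plane in direction $d$'' is locally constant in $d$: for a fixed finite radius $R$ there is a neighborhood $U$ of $d_0$ in the unit circle such that for all $d \in U$, the partition of $B_R$ into ``strictly inside $H$'', ``on the boundary strip'', ``strictly outside'' induced by a half-plane in direction $d$ is the same (for the relevant placements) as that induced by $d_0$ — this uses that $B_R$ is finite, so only finitely many separating hyperplane directions matter. (3) Conclude that the same bounded local rule witnessing determinism of $d_0$ also witnesses determinism of every $d$ in this neighborhood $U$; hence $U \subset D$, so $D$ is open. (4) Taking complements, the set of nondeterministic directions is closed.

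\textbf{Main obstacle.} The only genuinely delicate point is making step (2) precise: a half-plane in direction $d$ is an infinite object, and I must be careful that the ``local rule'' I extract from determinism of $d_0$ really transfers to nearby directions, since tilting the half-plane slightly changes which cells far from the boundary are included. The resolution is exactly the finiteness in step (1): determinism is witnessed by a rule of bounded radius $r_0$, so only the combinatorics of the half-plane within a ball of fixed radius matters, and that combinatorics is stable under small perturbations of $d$. One must also check the boundary-strip cells across which prediction happens are handled uniformly; again this is a finite check since the strip has bounded width. I would not belabor the epsilon-management of ``how small a neighborhood,'' as it is routine once the finiteness is isolated. This mirrors verbatim the argument for closedness of nonexpansive directions in \cite{BoLi97}, as the excerpt notes.
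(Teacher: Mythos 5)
Your proposal is correct and is exactly the route the paper intends: the paper states this lemma without proof, saying only that it follows as in the closedness argument of \cite{BoLi97}, and your scheme (compactness gives a bounded-radius one-sided coding rule witnessing determinism of $d_0$; a small perturbation of the direction preserves the rule; hence deterministic directions form an open set) is that argument. One caveat on step (2): the claim that nearby directions induce the \emph{same} half-plane partition of $B_R$ is not literally true, since half-plane traces on a finite ball are not locally constant in the direction; what the proof needs, and what your compactness step actually delivers, is a containment with a margin, namely that the coding rule can be stated as ``if $x,y\in X$ agree on $C(v)=\{u\in B_R(v) : \langle u-v,d_0\rangle\le -1\}$ then $x_v=y_v$'' (agreement on a bounded window, not on a translated half-plane as in your ``Concretely'' sentence), so that for any $d$ within angle about $1/(2R)$ of $d_0$ the set $C(v)$ still lies on the deep side of every $d$-half-plane whose boundary passes just below $v$, which is all that is required to advance a $d$-half-plane step by step and conclude that $d$ is deterministic.
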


\begin{lemma}
If a subshift $X \subset \Sigma^{\Z^2}$ is nontrivial, almost minimal and trace sparse, and has a direction of determinism, then it is the orbit closure of a singly periodic point, and its set of nondeterministic directions is $\{r\bol v', -r\bol v' \;|\; r \in \R_+\}$ for some $\bol v' \in \Z^2$.
\end{lemma}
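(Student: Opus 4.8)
The plan is to use the classification of almost minimal rowsparse subshifts (Theorem~\ref{thm:WhatminimalCases}) together with the auxiliary facts about determinism already isolated in this subsection: that a doubly periodic half-plane in a deterministic direction continues periodically (``something cannot appear from nothing''), and that the set of nondeterministic directions is closed in the unit circle. Since $X$ is almost minimal and rowsparse, Theorem~\ref{thm:WhatminimalCases} says $X$ is the orbit closure of a finite configuration, a highway, or a blob fractal. I would rule out the first and third cases exactly as in Proposition~\ref{prop:UpwardDeterminism}: if $X$ were the orbit closure of a finite configuration or a blob fractal, then in its orbit closure one finds, for any direction $e$, a half-plane in direction $e$ consisting entirely of $0$s whose boundary carries a nonzero symbol; taking $e$ to be the (or a) deterministic direction $d$ contradicts determinism, since the all-zero half-plane forces an all-zero continuation. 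Hence $X$ must be the orbit closure of a highway.

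Next I would show the ascending path underlying the highway has rational slope. Let $d$ be a deterministic direction of $X$. A highway configuration $x$ has as its support the range of a uniformly recurrent ascending path $p$; fix such $p$. The idea is that the deterministic rule, being given by a bounded half-plane window (by compactness), applied to a large portion of the configuration lying in a $d$-directional half-plane, determines the rest of $x$; combined with the fact that every row of $x$ is rowsparse (indeed has boundedly many nonzero cells) this pins the path into a periodic pattern of moves, so $p$ may be chosen periodic and its average slope is rational. Concretely: a $d$-half-plane cuts the path's range into a one-sided portion, the determinism rule fills the complementary half-plane, and since only finitely many configurations have a given pattern in a half-plane (here the half-plane contains all but boundedly many nonzero cells per row), a pigeonhole/compactness argument along translates forces spatial periodicity of $x$, i.e. $x$ is a singly periodic point and the slope of $p$ lies in $\Q^2$. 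Write $d_0 \in \Q^2$ for the direction of $p$.

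Finally I would identify the set of nondeterministic directions as $\{d_0, -d_0\}$. On one hand, the direction $d_0$ itself is nondeterministic: the periodic path configuration $x$ and its translate $\sigma^{v}(x)$ by a vector $v$ lying along the line $\R d_0$ but not a period of $x$ agree on every half-plane whose boundary is parallel to the path (so perpendicular to $d_0^{T}$, i.e. a half-plane in direction $\pm d_0$), since the path has essential width one and the two configurations differ only by a shift along the path's own direction; yet they are distinct. Thus both $d_0$ and $-d_0$ are nondeterministic. On the other hand, any direction $e \notin \{d_0,-d_0\}$ is deterministic: a half-plane in direction $e$ cuts the (width-one, rational-slope) support of $x$ transversally, so it contains all but finitely many nonzero cells of any translate of $x$, and since the whole subshift $X$ consists of translates of this one periodic configuration, the finitely many nonzero cells outside the half-plane are determined — more precisely, two configurations of $X$ agreeing on such a half-plane agree on cofinitely many cells of each row and, being translates of the same periodic $x$, must be equal. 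Hence the set of nondeterministic directions is exactly $\{d_0,-d_0\}$ with $d_0 \in \Q^2$, which is the assertion (with $d = d_0$).

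\textbf{Main obstacle.} The delicate step is the rationality/periodicity argument: turning ``$d$ is deterministic'' into ``the path $p$ can be taken periodic (equivalently $x$ is singly periodic)''. One must argue carefully that the local determinism rule, applied across a $d$-directional half-plane that already contains all but a bounded number of nonzero cells in each row, leaves only finitely many possible completions, and then extract genuine spatial periodicity (not merely eventual periodicity of the sequence of moves) via compactness and the uniform recurrence of $p$; the interplay between the half-plane's orientation $d$ and the path's orientation $d_0$, and the use of rowsparsity to bound the relevant window, is where the care is needed. The other steps are routine once Theorem~\ref{thm:WhatminimalCases} and the two cited lemmas on determinism are in hand.
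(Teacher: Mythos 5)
Your overall skeleton matches the paper's (classify $X$ via Theorem~\ref{thm:WhatminimalCases}, kill the finite and blob-fractal cases because something cannot appear from nothing, then argue the remaining highway is singly periodic), but the crucial middle step --- getting periodicity/rationality of the path from an arbitrary deterministic direction $d$ --- is not actually carried out, and as sketched it does not work. You apply the half-plane determinism directly with the given $d$, which may be irrational and whose boundary may be oriented arbitrarily relative to the path, and then assert that ``only finitely many configurations have a given pattern in a half-plane'' and that ``a pigeonhole/compactness argument along translates forces spatial periodicity.'' Neither claim is justified: determinism gives exactly one continuation, not a finite pigeonhole set in any useful sense, and the supporting claim that the half-plane contains all but finitely (or boundedly) many nonzero cells is false --- a half-plane cutting the path transversally misses an entire infinite half of the path. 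Extracting genuine periodicity from a half-plane rule in a possibly irrational direction is precisely the hard point (you flag it as the ``main obstacle'' but leave it unresolved), and the paper deliberately avoids it: it first uses the closedness of the set of nondeterministic directions (equivalently, openness of the deterministic ones) to replace $d$ by a nearby \emph{rational} deterministic direction, then blocks $X$ into $(d_1,d_2)$-blocks, passes again to an almost minimal subsystem, and skews so that the determinism becomes vertical; only then does the local-rule-plus-eventually-periodic-rows pigeonhole of Proposition~\ref{prop:UpwardDeterminism} apply cleanly. Your proof needs either this reduction or a genuinely new argument (for instance along the lines of Theorem~\ref{thm:PathCharacterizationGeneral}, tracking the dot products $\langle p(n), d\rangle$) to bridge the gap.

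A secondary, fixable slip: in your last paragraph you identify the nondeterministic directions as $\pm d_0$ where $d_0$ is the direction \emph{of} the path. This has the geometry backwards. A half-plane in direction $e$ has boundary orthogonal to $e$; the half-planes whose boundaries are parallel to the path are those in the directions orthogonal to $d_0$, and it is these orthogonal directions that are nondeterministic (an all-zero such half-plane is compatible with both $0^{\Z^2}$ and a distant translate of the highway), while $\pm d_0$ itself and every other direction transversal to the path is deterministic once $X$ is the orbit closure of a singly periodic point. The conclusion of the lemma (a single antipodal pair of rational nondeterministic directions) survives, but with $d$ equal to the rational direction orthogonal to the path, not the path direction itself.
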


\begin{proof}
Again since $X$ has a deterministic direction, it has a rational deterministic direction $\bol v$ by the previous lemma. It cannot contain a finite configuration or a \froctal{}, and thus must consist of $r$-highways for some fixed $r$. Let $H$ be a half-plane in direction $\bol v$ whose boundary is at the origin. Now, consider a minimal subsystem $Y$ of $\MPC_r(X)$. To each path $p : \Z \to \Z^2$ corresponding to a point in $Y$, associate the sequence of dot products $n \mapsto \langle p(n), \bol v \rangle$. This sequence measures how far the path is from the boundary of $H$.

Since the paths are uniformly recurrent, it follows from Theorem~\ref{thm:PathCharacterizationGeneral} that either some path enters some strip around the boundary of $H$ infinitely many times, or every path deviates from the boundary with some uniformly lower bounded velocity (in the sense that the sequence of dot products is an ascending path).

First, suppose some path visits some strip around the boundary of $H$ infinitely many times. Then it is easy to find a configuration where something appears from nothing, by looking at the times when the path is maximally far from the boundary in the direction $-\bol v$. Thus, this case is impossible.

Suppose then that $n \mapsto \langle p(n), \bol v \rangle$ is a uniformly ascending for every path $p$ corresponding to a point in $\MPC_r(X)$, that is, there exist $t, m$ such that if $p$ is any such path and $p(t)$ is in the inner $m$-border of $H$, i.e.\ $p(t) \in A = (\partial H + B_m(\bol 0)) \cap H$ then for all $t'' \notin [t'-t,t'+t]$, $p(t) \notin A$.

Now, observe that the border of any translate of $H$ is periodic, in the sense that for any $\bol u \in \R^2$ the characteristic function of $(\bol u + \R \bol v^T + B_{m/2}(\bol 0)) \cap \Z^2$ is in the orbit of one of finitely many singly periodic configurations in $\{0,1\}^{\Z^2}$, and uniform ascension of paths implies that there are only finitely many distinct configurations that can be seen on the border of $H$. From determinism in direction $\bol v$ it follows that every path is periodic, and almost minimality then implies that the subshift is the orbit closure of a singly periodic point. Pick $\bol v' = \bol v^T$.
\end{proof}

By extracting an almost minimal subsystem and applying the previous lemma, we obtain the following.

\begin{proposition}
\label{prop:DeterministicContainsSinglyPeriodic}
If a subshift $X \subset \Sigma^{\Z^2}$ is nontrivial and trace sparse and has a direction of determinism, then it contains a singly periodic point, and thus has a rational nondeterministic direction. 
\end{proposition}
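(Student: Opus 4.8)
The plan is to reduce to the almost minimal case and then apply the preceding lemma. First I would recall that rowsparseness is preserved under passing to subsystems, and that a direction of determinism for $X$ is also a direction of determinism for any subsystem $Y \subset X$ (determinism is a property that can only be inherited downward, since it just says a certain restriction map is injective, and injectivity on $X$ implies injectivity on a subset). By Lemma~\ref{lem:RowsparseContainsAlmostMinimal}, since $X$ is rowsparse it contains an almost minimal subsystem $Y$, and since $X$ is nontrivial we can take $Y$ to be nontrivial as well (the all-zero subshift cannot be the only subsystem, as there is a nonzero symbol occurring in $X$). Then $Y$ is nontrivial, almost minimal, rowsparse, and has a direction of determinism.

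Now I would invoke the lemma just proved above (the one stating that a nontrivial, almost minimal, rowsparse $X$ with a direction of determinism is the orbit closure of a singly periodic point): applied to $Y$, it tells us $Y$ is the orbit closure of a singly periodic configuration $z$. In particular $z \in Y \subset X$, so $X$ contains a singly periodic point. For the "and thus" clause, I would note that a singly periodic point $z$ has a nontrivial period vector $v$, and its orbit closure—being the orbit closure of a point periodic in direction $v$—has $v/\|v\|$ (equivalently the rational direction determined by $v$) as a nondeterministic direction: translates of $z$ in the direction orthogonal to $v$ all agree on a half-plane parallel to $v$ but differ globally, so that orthogonal direction is nondeterministic. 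Since $v$ can be taken with integer coordinates, this nondeterministic direction is rational. (Alternatively, this is immediate from the final clause of the lemma already cited, which asserts the set of nondeterministic directions of such an $X$ is $\{d,-d\}$ with $d \in \Q^2$, and this transfers to $X$ via the inclusion of nondeterministic directions of $Y$ into those of $X$.)

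The argument is essentially bookkeeping once the preceding lemma is in hand, so there is no serious obstacle; the only point requiring a moment's care is the directional transfer, i.e. checking that "$X$ nontrivial and rowsparse with a deterministic direction" legitimately yields a subsystem $Y$ to which the almost-minimal lemma applies, and that the existence of a singly periodic point in $Y$, together with the location of $Y$'s nondeterministic directions, gives the corresponding statement for $X$ itself. I would also remark explicitly why a singly periodic point forces a rational nondeterministic direction, since the proposition statement phrases the conclusion for $X$ and not merely for the extracted subsystem.
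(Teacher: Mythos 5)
Your proof is correct and matches the paper's (implicit) argument: the proposition is stated there without a separate proof, as an immediate corollary of the preceding lemma, obtained exactly as you do by extracting a nontrivial almost minimal rowsparse subsystem via Lemma~\ref{lem:RowsparseContainsAlmostMinimal}, noting that rowsparseness and any deterministic direction pass to subsystems, and transferring the singly periodic point and the rational nondeterministic directions of the subsystem back to $X$. The only small caveat is that your direct argument for the rational nondeterministic direction tacitly uses that rowsparseness plus single periodicity confines the support to a bounded strip parallel to the period vector, so a half-plane parallel to it misses the support; this is easy, and in any case your alternative route through the final clause of the cited lemma already settles it.
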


\subsection{Cellular automata}

Surjective cellular automata are essentially just subshifts with a rational deterministic direction, and thus it is clear that we obtain corollaries for cellular automata.

The main technical tool used in \cite{SaTo12cn} is the Starfleet Lemma for cellular automata on countable sofic shifts, which states that, starting from any configuration, a `fleet of spaceships' appears infinitely often, in the same configuration. This result shows that all cellular automata on countable sofic shifts at least occasionally behave like counter machines. We do not state this result precisely, but we reproduce one of the main corollaries obtained from it in \cite{SaTo12cn}, namely the decidability of the nilpotency problem.


More precisely, let $X \subset \Sigma^\Z$ be a countable sofic shift. A cellular automaton $f : X \to X$ is \emph{nilpotent} if for some $n \in \N$, $f^n(x) = 0^\Z$ for all $x \in X$. A \emph{spaceship} for $f$ is a non-periodic eventually periodic configuration $x = {^\infty} u v w^\infty$ such that $f^n(x) = \sigma^m(x)$ for some $m \in \Z, n \geq 1$. If $u, w \in 0^*$, then $x$ is called a \emph{glider}.

\begin{theorem}
\label{thm:SaloTormaProof}
Let $X$ be a bounded one-dimensional subshift and $f : X \to X$ a cellular automaton. Then either there exists $k$ such that $f^k(X)$ is spatially periodic, or there exists a spaceship for $f$. 
In particular, nilpotency is decidable for cellular automata on countable sofic shifts.
\end{theorem}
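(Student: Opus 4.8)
The plan is to reduce the dichotomy statement to the already-proven machinery by running the cellular automaton sideways, i.e.\ by treating the $\Z$-trace of $f$ as the ``space'' axis of a two-dimensional subshift. First I would form the spacetime subshift $S$ of $f$: this is the $\Z^2$-subshift in which rows are configurations of $X$ and the $(i+1)$th row is the $f$-image of the $i$th row. Its projective subdynamics (in the sense of rows) is contained in the limit set $\bigcap_n f^n(X)$, which is itself a subshift with bounded projective subdynamics. However, the trick is to transpose: consider instead the subshift $T$ on the same underlying configurations whose ``first axis'' is the time direction of $f$, so that a row of $T$ is a trace $n \mapsto f^n(x)_0$ (suitably windowed) and the second axis is the spatial direction of $X$. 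The point of the transposition is that $T$ has a rational deterministic direction (the time direction of a CA), and thus behaves like an SFT for our purposes; in particular ``something cannot appear from nothing'' along the time axis.

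Next I would need the projective subdynamics of $T$ to be of a shape our theorems can handle. Here the hypothesis that $X$ has bounded projective subdynamics enters: after the blocking-and-factoring trick used in the proof of Theorem~\ref{thm:FullPavlovSchraudnerProof} (block $X$ by the least common multiple of the lengths of the words $w_i$ witnessing boundedness, and collapse all distinct zero symbols on each row to a single $0^\Z$), we obtain a related subshift whose spatial projective subdynamics is sparse. If the resulting spacetime-type subshift is trivial on the relevant axis, then $f^k(X)$ is already spatially periodic for some $k$ and we are in the first case. Otherwise it is nontrivial and rowsparse, so by Lemma~\ref{lem:RowsparseContainsAlmostMinimal} it contains an almost minimal rowsparse subsystem, and Theorem~\ref{thm:WhatminimalCases} applies: this subsystem is the orbit closure of a finite configuration, a highway, or a blob fractal.

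Then I would rule out finite configurations and blob fractals exactly as in Proposition~\ref{prop:DeterministicContainsSinglyPeriodic} and Proposition~\ref{prop:UpwardDeterminism}: a deterministic direction forbids patterns appearing from nothing, so a finite configuration would have to be fillable by running the deterministic local rule, forcing it to be $0^{\Z^2}$; and in the orbit closure of a blob fractal one can always find a half-plane of zeroes in the given direction whose boundary carries a nonzero symbol, again contradicting determinism. That leaves the highway case. A highway is a uniformly recurrent bounded-width ascending path; reading this configuration back through the transposition and the blocking/factoring, the finite ``blobs'' of the path, which are eventually periodic rows of the spatial projective subdynamics, become periodic-in-time-and-space windows. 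Interpreting the path as the worldline of a bounded-width pattern moving at a rational speed (rationality coming, as in Proposition~\ref{prop:UpwardDeterminism}, from the determinism being a local rule together with eventual periodicity of rows), and using the pigeonhole principle on the finitely many possible left periods, central patterns, and right periods, I would extract a configuration $x = {}^\infty u\, v\, w^\infty$ with $f^n(x) = \sigma^m(x)$ for some $n \ge 1$; since the highway is non-finite, $x$ is non-periodic, hence a spaceship. When $u, w \in 0^*$ it is a glider; for the decidability statement note that the existence of a spaceship is semidecidable (enumerate candidate $u, v, w, m, n$ and check the CA identity on the relevant sofic shift) and nilpotency is semidecidable, and on a countable sofic shift $f^k(X)$ being spatially periodic is equivalent to eventual nilpotency-type behavior that rules out spaceships, so the dichotomy gives a decision procedure.

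The main obstacle I expect is the highway case: specifically, translating a uniformly recurrent bounded-width ascending path in the transposed, blocked, factored subshift back into an honest spaceship of the original CA $f$ on the original sofic shift $X$. One has to check that the ``width'' of the highway really corresponds to a bounded-size moving pattern (not spread out over unboundedly many spatial cells), that the rational slope of the path is compatible with the CA neighborhood, and that undoing the collapse of distinct zero symbols can be done coherently so that the resulting configuration genuinely lies in $X$ and genuinely satisfies $f^n(x) = \sigma^m(x)$ rather than merely something weaker up to the factoring. This is the same kind of bookkeeping as in the proofs of Theorems~\ref{thm:FullPavlovSchraudnerProof} and \ref{prop:DeterministicContainsSinglyPeriodic}, so I expect it to go through, but it is where the real work lies.
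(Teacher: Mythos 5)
Your proposal is correct and takes essentially the same route as the paper, which proves this theorem simply by applying Proposition~\ref{prop:UpwardDeterminism} to the spacetime subshift of $f$; that proposition's proof is exactly your chain of blocking and factoring via boundedness to get sparse rows, extracting an almost minimal subsystem, applying Theorem~\ref{thm:WhatminimalCases}, ruling out finite configurations and blob fractals by determinism (nothing appears from nothing), and extracting a rational-slope singly periodic point, i.e.\ a spaceship, from the highway case, with decidability following from the two semidecidable conditions. Note only that your opening ``transposition'' is neither needed nor actually used: the boundedness hypothesis must stay on the spatial (row) axis with time as the vertical deterministic direction, which is how your argument in fact proceeds.
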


\begin{proof}
If $f^k(X)$ is not finite for any $k$, then the horizontal trace of the spacetime subshift of $f$ is infinite. By Proposition~\ref{prop:UpwardDeterminism}, $X$ contains a singly periodic point. Such a configuration is precisely a spaceship for $f$. The decidability of nilpotency follows because both ``$\exists k: |f^k(X)| < \infty$'' and ``$x \mbox{ is a spaceship for } f$'' are semidecidable on sofic shifts, and in the former case we can easily check nilpotency, while the latter implies non-nilpotency.
\end{proof}


\begin{proposition}
\label{prop:TraceLimitSpaceshipProof}
Let $X \subset \Sigma^\Z$ be any one-dimensional subshift, and let $f : X \to X$ be a CA such that either $f$ is asymptotically nilpotent or the limit set of $f$ contains only configurations that are eventually zero to the right. Then $f$ is either nilpotent or has a glider.
\end{proposition}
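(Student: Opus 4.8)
The plan is to reduce to the main classification theorem (Theorem~\ref{thm:WhatminimalCases}) by extracting a suitable almost minimal rowsparse subsystem from the spacetime subshift of $f$, and then argue that the only surviving case produces a glider.

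First I would set up the two-dimensional picture: let $Y$ be the spacetime subshift of $f$, whose rows are the successive $f$-images and whose projective subdynamics is the limit set $L$ of $f$. If $f$ is asymptotically nilpotent, then every point of $X$ tends to $0^\Z$, so in particular $L = \{0^\Z\}$ — but this forces $f$ to be nilpotent (by compactness, $f^n(X) = \{0^\Z\}$ for some $n$), and then the statement holds vacuously (or trivially, depending on whether one counts the empty glider). So the interesting case is when the limit set contains only configurations eventually zero to the right. Then the projective subdynamics of $Y$ has the property that every point is eventually zero to the right, which is exactly the second hypothesis of Lemma~\ref{lem:Extract} (and of Lemma~\ref{lem:RowsparseContainsAlmostMinimal}/Lemma~\ref{lem:EventuallyZeroRight}). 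Hence $Y$ contains an almost minimal uniformly rowsparse subsystem $W$.

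Next I would apply Theorem~\ref{thm:WhatminimalCases} to $W$: it is the orbit closure of a finite configuration, a highway, or a blob fractal. Here is where the deterministic direction of $Y$ does the work. The spacetime subshift $Y$ is deterministic in the vertical (upward) direction, since the $(i+1)$th row is the $f$-image of the $i$th; this property is inherited by the subsystem $W$. As in Proposition~\ref{prop:UpwardDeterminism} and the surrounding lemmas, upward determinism rules out blob fractals (in the orbit closure of a blob fractal one finds an all-zero upper half-plane whose boundary row carries a nonzero symbol — something appearing from nothing) and also rules out finite configurations for the same reason, unless the finite configuration is itself forced to be periodic under $f$. Working through that: if $W$ is the orbit closure of a finite configuration, the vertical determinism together with spatial eventual-periodicity of rows forces the finite blob to be translated by a fixed power of $f$ up to a spatial shift, i.e. we get precisely a glider (a finite configuration $x$ with $f^n(x) = \sigma^m(x)$). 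If $W$ is the orbit closure of a highway, the finite "blobs" are empty and the nonzero support is a single bounded-width ascending path; running $f$ on a lower half-plane and using that rows are eventually zero to the right and eventually periodic, the path direction must be rational and periodic, so again we extract a glider (a finite configuration shifted by a power of $f$) from a covering configuration — this is the same pigeonhole/determinism maneuver as in Proposition~\ref{prop:UpwardDeterminism} and Theorem~\ref{thm:SaloTormaProof}.

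The main obstacle I expect is bookkeeping the passage from the abstract classified subsystem $W$ back to an actual glider in $X$: one must check that the "finite configuration or highway in the spacetime subshift" genuinely yields a finite configuration $x \in X$ with $f^n(x) = \sigma^m(x)$, rather than merely a bi-infinite spacetime diagram. This is exactly the role played by "every row is eventually zero to the right" — it prevents the path from drifting off to $+\infty$ in a non-eventually-periodic way and, combined with compactness and the local nature of determinism, pins down a genuine glider; making this rigorous is the one genuinely technical point, but it runs parallel to the arguments already given for Proposition~\ref{prop:UpwardDeterminism} and Theorem~\ref{thm:SaloTormaProof}, so I would simply invoke those after verifying the hypotheses. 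Everything else is a direct citation of Lemma~\ref{lem:Extract} and Theorem~\ref{thm:WhatminimalCases}.
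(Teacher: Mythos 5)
Your treatment of the second hypothesis (limit set eventually zero to the right) is essentially the paper's argument: form the spacetime subshift, extract an almost minimal rowsparse subsystem via Lemma~\ref{lem:Extract}, and use the vertical deterministic direction to reduce to a singly periodic point, i.e.\ a glider; the paper packages the case analysis you sketch into Proposition~\ref{prop:DeterministicContainsSinglyPeriodic} rather than redoing it, but that difference is cosmetic. However, your dismissal of the asymptotically nilpotent case contains a genuine error. Asymptotic nilpotency does \emph{not} imply that the limit set is $\{0^\Z\}$: the condition $f^n(x) \rightarrow 0^\Z$ constrains the forward orbit of each fixed $x$, while membership of $y$ in $\bigcap_n f^n(X)$ only requires preimages $x_n$ with $f^n(x_n) = y$, and these $x_n$ vary with $n$. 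Concretely, if $f$ has a glider $g$ with $f^n(g) = \sigma^m(g)$ and $m \neq 0$, then $f^{kn}(\sigma^{-km}(g)) = g$ for all $k$, so the nonzero finite configuration $g$ lies in the limit set, even though every forward orbit (whose support drifts to infinity) converges to $0^\Z$ in the product topology. So asymptotically nilpotent but non-nilpotent CA exist on general subshifts, their limit sets are nontrivial, and the dichotomy in the statement is exactly what accommodates them. Had your one-line reduction (``asymptotically nilpotent $\Rightarrow$ limit set trivial $\Rightarrow$ nilpotent by compactness'') been valid, Theorem~\ref{thm:GuillonRichard} of Guillon and Richard would be trivial, which it is not.

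The paper handles this case with a different and essential trick: it builds the spacetime subshift with $f$ run \emph{to the right}, i.e.\ it transposes the diagram so that each row records the trace of a single cell over time. Asymptotic nilpotency then says precisely that every row is eventually zero to the right (for each $i$, $f^n(y)_i = 0$ for all large $n$), so Lemma~\ref{lem:Extract} applies to this transposed subshift; the deterministic direction is now horizontal rather than vertical, and the same determinism argument (as in Proposition~\ref{prop:DeterministicContainsSinglyPeriodic} and Theorem~\ref{thm:SaloTormaProof}) yields a singly periodic point, which corresponds to a glider for $f$. To repair your proof you should replace the claimed triviality of the asymptotically nilpotent case with this transposed-spacetime argument; your handling of the other case, and your closing remarks about converting the singly periodic spacetime point into an honest glider $x$ with $f^n(x) = \sigma^m(x)$, can stay as they are.
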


\begin{proof}
If $f$ is asymptotically nilpotent, then consider its spacetime subshift $Y$ where $f$ is run to the right, so that rows are eventually zero to the right. By Lemma~\ref{lem:Extract} we can extract an almost minimal trace sparse subshift $Z$ from $Y$. Then $Z$ is a trace sparse almost minimal subshift with a deterministic direction, and thus contains a singly periodic point, which corresponds to a glider. The proof in the case of a limit set with all configurations eventually zero to the right is the same, but using a vertical deterministic direction.
\end{proof}


We can also obtain the following proposition. We note that this is stronger than the previous proposition in the sense that even on full shifts, there are cellular automata such that the closure of their asymptotic set is a proper subshift of their limit set.

\begin{proposition}
Let $X \subset \Sigma^\Z$ be any subshift, and let $f : X \to X$ be a CA such that the closure of the asymptotic set of $f$ contains only configurations that are eventually zero to the right. Then $f$ is either nilpotent or has a glider.
\end{proposition}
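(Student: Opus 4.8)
The strategy is to mimic the proof of Proposition~\ref{prop:TraceLimitSpaceshipProof}, but work with the asymptotic set of $f$ rather than its limit set, and use the hypothesis that its closure lies in the one-sided (eventually zero to the right) class. First I would record the precise definition of the asymptotic set $A(f) = \bigcup_{x \in X} \bigcap_{n=0}^\infty \overline{\bigcup_{m \geq n} f^m(x)}$, and let $\bar A$ be its closure, which by hypothesis consists only of configurations eventually zero to the right. The aim is to build, from the dynamics of $f$ restricted to the asymptotic behaviour, a two-dimensional subshift $Y$ in which $f$ appears as a deterministic direction and whose projective subdynamics lies in the one-sided class, so that Lemma~\ref{lem:Extract} and Theorem~\ref{thm:WhatminimalCases} apply.

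Concretely, I would form the spacetime-type subshift $Y \subset (\Sigma)^{\Z^2}$ consisting of all configurations $y$ with $y_{(\cdot,i+1)} = f(y_{(\cdot,i)})$ for all $i$ and whose every row lies in $\bar A$. One must check $Y$ is nonempty and that every row is indeed eventually zero to the right: nonemptiness follows because for any $x \in X$ one can take a spacetime point of $f$ and pass to a limit point of its upward shifts $\sigma^{(0,-n)}$, landing (by definition of the asymptotic set and compactness) in a point all of whose rows are in $A(f) \subseteq \bar A$; the one-sidedness of rows is exactly the hypothesis. Running $f$ to the right rather than upward makes the upward direction $(0,1)$ a deterministic direction of $Y$ — or, depending on bookkeeping, one sets things up so that the direction in which rows become eventually zero is a deterministic direction, exactly as in the proof of Proposition~\ref{prop:TraceLimitSpaceshipProof}. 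By Lemma~\ref{lem:Extract} (the ``eventually zero to the right'' clause), $Y$ contains an almost minimal uniformly rowsparse subsystem $Z$.

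Now $Z$ is almost minimal, rowsparse, and has a deterministic direction, so by Proposition~\ref{prop:DeterministicContainsSinglyPeriodic} (or directly by Theorem~\ref{thm:WhatminimalCases} together with the arguments ruling out finite configurations and blob fractals for deterministic subshifts, as in Proposition~\ref{prop:UpwardDeterminism}) either $Z$ is trivial or $Z$ contains a singly periodic point. If $Z$ is trivial, then the all-zero configuration is the only asymptotic behaviour, and a short argument shows $f$ is nilpotent (every orbit accumulates only on $0^\Z$, and since $X$ is a subshift on which $f$ acts, asymptotic nilpotency of this strong form forces genuine nilpotency by compactness — or one notes this reduces to the asymptotically nilpotent case already handled in Proposition~\ref{prop:TraceLimitSpaceshipProof}). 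Otherwise, a singly periodic point of $Z$ is a configuration $z$ with $f^n(z\text{-row}) = \sigma^m(z\text{-row})$ whose row is eventually zero to the right and eventually periodic on both sides; since it is a limit of finite points (blobs padded with zeros, again as in the deterministic analysis), and determinism propagates the finite central pattern, one extracts a finite configuration that is shifted by a power of $f$, i.e.\ a glider.

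The main obstacle I anticipate is the bookkeeping around the asymptotic set: unlike the limit set, $A(f)$ need not be $f$-invariant or closed, so some care is needed to ensure that the spacetime subshift $Y$ built from $\bar A$ genuinely captures the dynamics and that every row of every point of $Y$ is eventually zero to the right (rather than just the rows of asymptotic points of specific orbits). The cleanest route is probably to take $Y$ to be the orbit closure, in the vertical shift, of the set of spacetime points obtained as upward limits of orbits — this is automatically shift-invariant and closed, and one verifies its rows lie in $\bar A$ — and then apply the extraction and classification machinery to $Y$ exactly as above. Everything after the construction of $Y$ is a direct repetition of the arguments already used for Proposition~\ref{prop:TraceLimitSpaceshipProof} and Proposition~\ref{prop:DeterministicContainsSinglyPeriodic}.
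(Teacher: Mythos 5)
Your proposal follows essentially the same route as the paper: split off the case where the asymptotic set is only $0^\Z$ (then $f$ is asymptotically nilpotent and Proposition~\ref{prop:TraceLimitSpaceshipProof} already gives nilpotency or a glider), and otherwise run $f$ in a spacetime subshift whose rows lie in the closure of the asymptotic set, so that every row is eventually zero to the right, extract an almost minimal rowsparse subsystem via Lemma~\ref{lem:Extract}, and use the deterministic direction (Proposition~\ref{prop:DeterministicContainsSinglyPeriodic}) to obtain a singly periodic point, i.e.\ a glider. Two small corrections: your parenthetical claim that ``asymptotic nilpotency of this strong form forces genuine nilpotency by compactness'' is false in general (this is exactly why the conclusion is ``nilpotent \emph{or} has a glider''), but your alternative -- reducing to the asymptotically nilpotent case of Proposition~\ref{prop:TraceLimitSpaceshipProof} -- is the correct one and is what the paper does; and the case split should be made on whether the asymptotic set itself contains a nonzero point (the paper uses that $f$ is surjective on its closure, making the spacetime subshift nonzero), rather than on whether the extracted subsystem $Z$ happens to be trivial, since triviality of one extracted subsystem does not by itself imply the asymptotic set is $\{0^\Z\}$.
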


\begin{proof}
If the asymptotic set of $f$ contains only the all-zero point, then $f$ is asymptotically nilpotent, and the claim follows from the previous theorem. Otherwise, the closure $Z$ of the asymptotic set of $f$ is a subshift on which $f$ is surjective and that contains at least one nonzero point. Then the spacetime subshift of $f$ where the horizontal traces are restricted to be in $Z$ is a two-dimensional nonzero subshift where every row is eventually zero to the right, and the claim follows as in the previous theorem.
\end{proof}


In the case where $X$ is an SFT, we of course cannot have any such gliders in the nilpotent case, and we obtain the following theorem (which is also proved in \cite{GuRi10}).

\begin{theorem}
\label{thm:GuillonRichardProof}
Let $X$ be a one-dimensional SFT and $f : X \to X$ a cellular automaton. Then $f$ is nilpotent if one (equivalently all) of the following holds:
\begin{itemize}
\item $f$ has a sparse limit set,
\item $f$ has a sparse asymptotic set,
\item $f$ is asymptotically nilpotent, or
\item the (one- or two-sided) vertical trace subshift of $f$ is sparse.
\end{itemize}
\end{theorem}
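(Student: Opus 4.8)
The plan is to show that each of the four conditions forces $f$ to be nilpotent; since a nilpotent $f$ trivially satisfies all four (its limit set, asymptotic set and one- and two-sided traces all collapse to the orbit of $0^\Z$, and it is certainly asymptotically nilpotent), this simultaneously establishes the asserted equivalence of the conditions. So fix an SFT $X$ and a cellular automaton $f\colon X\to X$ satisfying one of the four hypotheses, assume for contradiction that $f$ is not nilpotent, and aim to produce a glider and then contradict its existence.

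First I would obtain a glider. If $f$ is asymptotically nilpotent this is precisely Proposition~\ref{prop:TraceLimitSpaceshipProof} (we get a glider since $f$ is not nilpotent). If the limit set of $f$ is sparse then in particular every configuration of the limit set is eventually zero to the right, so Proposition~\ref{prop:TraceLimitSpaceshipProof} applies again. If the asymptotic set of $f$ is sparse, then any nonzero $\omega$-limit set inside it is an $f$-invariant set of finite-support configurations, so the spacetime subshift of $f$ over it is rowsparse; and if the (one- or two-sided) trace subshift is sparse, then the spacetime subshift of $f$ with the time axis playing the role of the space axis is rowsparse. In these last two cases Lemma~\ref{lem:Extract} yields an almost minimal rowsparse subsystem $Z$ of the relevant spacetime subshift, and since $f$ is deterministic, $Z$ has a direction of determinism; then by Theorem~\ref{thm:WhatminimalCases} and Proposition~\ref{prop:DeterministicContainsSinglyPeriodic} (finite points and blob fractals being excluded because nothing appears from nothing) $Z$ is the orbit closure of a singly periodic point, and unwinding this, while using sparseness of the limit/asymptotic/trace data to see that the corresponding spaceship has finite support, this singly periodic point is a glider.

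It then remains to contradict the existence of a glider $g={}^\infty 0\,v\,0^\infty$ with $f^n(g)=\sigma^m(g)$, $n\ge 1$, and this is where the SFT hypothesis finally enters. Since $X$ is an SFT it allows zero-gluing, so for a bi-infinite syndetic family of positions $(p_i)_{i\in\Z}$ whose gaps exceed both the transient width of the glider and the neighborhood radius of $f^n$, the configuration $G=\sum_{i\in\Z}\sigma^{p_i}(g)$ lies in $X$, the copies of $v$ evolve independently and at a common speed without ever colliding, and hence $f^{kn}(G)=\sigma^{km}(G)$ for all $k\ge 0$. Consequently $G$ lies in the limit set and in its own $\omega$-limit set, $G$ does not converge to $0^\Z$ under iteration (some copy of $v$ remains within bounded distance of the origin at arbitrarily large times, whether $m=0$ or $m\neq 0$), and the trace of $G$ at the origin has infinitely many nonzero entries; each of these facts contradicts the corresponding one of the four hypotheses, so $f$ must have been nilpotent.

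The step I expect to be the main obstacle is precisely this last one: carrying out the bookkeeping that shows the glued configuration $G$ genuinely falsifies sparseness of the limit and asymptotic sets, nontriviality of the trace, and asymptotic nilpotency, handling the drifting case $m\neq 0$ and the stationary case $m=0$ separately and choosing the gap so that $f^{kn}$ acts blockwise for every $k$ despite its radius growing with $k$; a secondary technical point is confirming that the trace-subshift hypothesis really does furnish a rowsparse spacetime subshift that still carries a deterministic direction.
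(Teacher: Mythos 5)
Your proposal is correct and follows the paper's intended route: the theorem is obtained exactly as you do, by extracting a glider via Proposition~\ref{prop:TraceLimitSpaceshipProof} (and its companion for the asymptotic set), and then zero-gluing infinitely many well-separated translates of the glider inside the SFT to contradict each of the four hypotheses. The only cosmetic difference is that for the trace and asymptotic-set cases you rerun the spacetime/determinism machinery (Lemma~\ref{lem:Extract}, Theorem~\ref{thm:WhatminimalCases}, Proposition~\ref{prop:DeterministicContainsSinglyPeriodic}) instead of simply invoking the stated propositions (e.g.\ a sparse one-sided trace already gives asymptotic nilpotency), which changes nothing essential.
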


Here, the two-sided vertical trace subshift of $f$ is the vertical trace of the spacetime subshift of $f$, and the one-sided vertical trace is $\{y \in \Sigma^\N \;|\; \exists x \in X: \forall i: y_i = f^i(x)_0\}$.

\subsection{Irrational sparseness}

Since Theorem~\ref{thm:Main} applies to any almost minimal subshift, it in particular applies to binary subshifts that are coding some property of points in another system. We show one application of this observation, namely that in Theorem~\ref{thm:Main} it is enough to assume sparseness in any (possibly irrational) direction.

If $\bol v \in \R^2 \setminus \{(0,0)\}$ is a direction and $X$ a subshift, we say $x \in X$ is a \emph{road in direction $\bol v$} if the support of $x$ is the range of a path $p : \Z \to \Z^2$ such that the sequence of dot products $n \mapsto \langle p(n), \bol v \rangle : \Z \to \R$ is a uniformly ascending path. It is a \emph{highway in direction $\bol v$} if the colored path $q$ defined by $q(i) = (p(i), x_{p(i)})$ is uniformly recurrent.

\begin{proposition}
Let $X$ be a $\Z^2$-subshift and let $L = \R \bol v$ be a line with arbitrary slope. Suppose that for some $r > 0$, the strip $\bol u + L + B_r(0)$ in $x$ contains finitely many nonzero symbols for every $\bol u \in \R^2$ and every $x \in X$. Then $X$ contains a finite point, a \froctal{} or a highway in direction $\bol v^T$.
\end{proposition}

\begin{proof}
Let $\bol v$ be the unit vector giving the slope of $L$. 
Let $R \subset \R^2$ be the closed square having one corner at $(0,0)$ and with sides $\bol v$ and $\bol v^T$. Now, to a point $x \in X$ whose nonzero support is $S \subset \Z^2$, associate the binary point $\psi(x) \in \{0,1\}^{\Z^2}$ defined by
\[ \psi(x)_{\bol u} = 0 \mbox{ iff } S \cap (\bol u_1 \bol v + \bol u_2 \bol v^T + R) \neq \emptyset. \]

Then $\psi(x)$ is a coding of a rotated version of $x$. Let $Y$ be the subshift generated by the points $\psi(x)$. Now, it follows from the assumption that the subshift $\psi(X)$ is trace sparse, and contains a point of one of the three types in Theorem~\ref{thm:WhatminimalCases}. It is easy to check that finite points must come from finite points through a rotation, and highways must come from roads in direction $d^T$, which we can turn into highways in direction $d^T$ by again passing to an almost minimal subshift. \Froctals{} need not a priori come from \froctals{} (since our definition of \froctal{} implies almost minimality). However, it is again clear that any almost minimal subsystem will be supported by a \froctal{}.
\end{proof}

\subsection{Topological full groups}


The \emph{topological full group} of a subshift $X$ is the group $G$ of homeomorphisms $h : X \to X$ such that for some \emph{cocycle} $c : G \times X \to \Z$, $c(h, x) = n \implies h \cdot x = \sigma^n(x)$ and $c$ is continuous in its right argument \cite{GiPuSk99}. Such a $c$ is a cocycle in the sense of cohomology, that is, $c(g \circ h, x) = c(h, x) + c(g, h(x))$.

\begin{lemma}
\label{lem:TopologicalFullGroup}
Let $g$ be an element of the topological full group $G$ of a subshift $X \subset \Sigma^\Z$ with cocycle $c$. If $g$ has infinite order, then there is a point $x \in X$ and $k \geq 1$ such that $n \mapsto c(g^{kn}, x)$ is increasing or decreasing.
\end{lemma}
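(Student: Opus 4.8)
The plan is to interpret the group element $g$, together with its cocycle, as giving rise to a $\Z^2$-subshift that codes the orbit structure of $g$, and then apply Theorem~\ref{thm:Main} (via Theorem~\ref{thm:MainFindingProof}) to that two-dimensional object. First I would build the \emph{spacetime subshift of $g$}: over the base $X \subset \Sigma^\Z$, place in row $i$ the configuration $g^i(x)$, and additionally record on each cell the local value of the cocycle $c(g,\cdot)$ — this is a continuous function of the base point, so the resulting object is a genuine $\Z^2$-subshift $W$ on an enlarged alphabet, and the map from row $i$ to row $i+1$ is a homeomorphism commuting with the shift (it is $g$ followed by the shift by $c(g,\cdot)$, a cellular automaton). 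In particular $W$ has a deterministic vertical direction.

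Next I would mark on a separate track of $W$ the \emph{trajectory of a distinguished cell}: start at the origin of row $0$ and, going upward, move horizontally by $c(g, g^i(x))$ at step $i$ (and symmetrically downward using $c(g^{-1},\cdot)$). This produces a path $\Z \to \Z^2$ in every configuration, and the key point is that $g$ has infinite order precisely when these paths are unbounded. More carefully: if $g$ has infinite order, then by a standard compactness/Gottschalk–Hedlund type argument there is at least one point $x$ and one cell whose trajectory is not eventually periodic, and hence — after passing to an orbit-closure and then to an almost minimal subsystem of the marked-path subshift — one gets a subshift where the distinguished track is \emph{sparse}: the nonzero support of the ``path-position'' coding on each row is a single cell, so the projective subdynamics is $1$-sparse. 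This is where Lemma~\ref{lem:RowsparseContainsAlmostMinimal} and Lemma~\ref{lem:Extract} come in, allowing the extraction of an almost minimal uniformly rowsparse subsystem that still records a nontrivial trajectory.

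Now Theorem~\ref{thm:WhatminimalCases} applies: the extracted subshift is the orbit closure of a finite point, a highway, or a blob fractal. A finite point would mean the trajectory is eventually constant in both directions, i.e. the cell returns to a bounded region syndetically — combined with the deterministic direction coming from $g$, this would force $g^k$ to fix the point for some $k$, contradicting infinite order (this is the ``something appears from nothing'' style argument of Proposition~\ref{prop:UpwardDeterminism}, or one argues directly that a finite trajectory plus determinism yields temporal periodicity). A blob fractal is similarly excluded: a deterministic direction is incompatible with a blob fractal, since in its orbit closure one finds a half-plane of zeroes whose boundary carries a nonzero symbol, contradicting determinism — exactly the argument already used in Proposition~\ref{prop:UpwardDeterminism} and Proposition~\ref{prop:DeterministicContainsSinglyPeriodic}. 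Hence the extracted subshift is the orbit closure of a \emph{highway}, i.e. a uniformly recurrent ascending path of bounded width. Translating back: the trajectory of the distinguished cell, which is this ascending path, has a height coordinate that is strictly increasing on average; being ascending means $\sum_{i=j}^{j+m} (\text{step}_i) > 0$ for all $j$ and some fixed $m$, so grouping the steps in blocks of size $k=m$ we get that $n \mapsto c(g^{kn}, x)$ is strictly increasing (and the descending case gives strictly decreasing). That is the desired conclusion.

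The main obstacle I anticipate is the bookkeeping in the first two steps: correctly packaging ``$g$ acts by $\sigma^{c(g,x)}$ with $c$ continuous'' into an honest two-dimensional subshift with a deterministic direction, and verifying that infinite order of $g$ really does produce an \emph{unbounded} (rather than merely infinite) trajectory after passing to an almost minimal subsystem — one needs the uniform recurrence of the minimal subsystem to upgrade ``some trajectory is unbounded somewhere'' to ``the canonical trajectory is uniformly ascending/descending'', and here Theorem~\ref{thm:PathCharacterization} does the real work, ruling out the bounded case (impossible by the above determinism argument combined with minimality) and the infinite-to-one case (impossible by rowsparsity). The rest is then a matter of matching the combinatorial definition of ``ascending path'' to the statement's ``$n \mapsto c(g^{kn},x)$ increasing''.
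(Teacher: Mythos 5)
There is a genuine gap, and it lies in the orientation of your two-dimensional coding. In your spacetime layout (row $i$ carries $g^i(x)$, time runs vertically, and the distinguished cell moves horizontally by the cocycle), the marked track has exactly one cell per row, so it is $1$-sparse for free, and its support is automatically the range of an ascending path: it climbs one row per time step no matter what $g$ does. Consequently, when you apply Theorem~\ref{thm:WhatminimalCases} and land in the highway case, the word ``ascending'' refers to the \emph{time} coordinate, not to the cocycle sums; the highway conclusion carries no information about the horizontal displacement $n \mapsto c(g^n,x)$, and your final step (``being ascending means $\sum_{i=j}^{j+m}$ of the steps is positive, hence $c(g^{kn},x)$ is monotone'') silently swaps the height coordinate (time) for the horizontal one (cocycle). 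A symptom of the problem is that your argument never uses the infinite-order hypothesis in a way that could force monotonicity, yet the statement is false for finite-order $g$. The paper's proof transposes the picture precisely to avoid this: it takes the subshift with unary rows and columns equal to configurations of $X$, and overlays arrows $(1, c(g,\sigma^{v}(x)))$, so that the path's \emph{height} is the cocycle sum and ``highway'' literally means the asserted monotonicity. In that layout rowsparsity of the path layer is no longer automatic, and this is exactly where infinite order enters: since rows are unary, the arrow at a cell depends only on its row, so a path revisiting a row is periodic; infinite order then yields arbitrarily long non-revisiting paths, and forbidding revisits gives a nonempty rowsparse path subshift to which the classification applies.

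A secondary issue: your exclusion of the finite-point and blob-fractal cases via a ``vertical deterministic direction'' is shaky. An element of the topological full group is not shift-commuting, so after taking the horizontal orbit closure the anchor (the column of row origins) where $c(g,\cdot)$ is read off is lost, and determinism of the closed subshift $W$ is unclear; moreover, to be rowsparse at all you must pass to the path-only factor, and there determinism is certainly gone. This part is repairable without determinism: in the path-only factor every marked cell has a marked successor and predecessor at bounded distance, so the support of every nonzero configuration has an infinite $r$-component, which rules out finite points and blob fractals directly (this is what the paper's proof implicitly uses). But the orientation problem above is the step that actually fails and needs the paper's transposed construction to fix.
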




\begin{proof}
Take the $\Z^2$-subshift $Y$ where rows are unary and columns are configurations of $X$. For $x \in X$, write $\bar x$ for the unique configuration of $Y$ with central column $x$. Add another layer on top of $Y$, where in cell $\bol v \in \Z^2$ of $\bar x \in Y$, we write the vector $(1, c(g, \sigma^{\bol v_2}(x)))$. To see that this gives a subshift, observe that by continuity $c(g, \sigma^{\bol v_2}(x))$ is determined by $\sigma^{\bol v_2}(x)|_F$ for a finite $F \subset \Z$, thus by $x|_{\bol v_2 + F}$ in a shift-invariant way, thus $(1, c(g, \sigma^{\bol v_2}(\bar x)))$ is determined by the values in $\bar x|_{\bol v + \{0\} \times F}$ in a shift-invariant way (since rows are constant). Call the resulting subshift $Z$.

(The relation between $Z$ and the path cover $\PC_r(Y)$ (with $r = \max |c(f, x)|$) is that $Z$ is defined similarly as $\PC_r(Y)$ except cycles are not forbidden, but additionally the symbol $\#$ is forbidden and furthermore the path is entirely determined by the local rule of the cocycle $c$.)

Now, the function $n \mapsto c(g^{kn}, x)$ tells us the vertical movement of a path in a configuration of $Z$, when we interpret it as a graph $G(Y)$ by following the vectors written on the second layer as in Section~\ref{sec:PathCovers}. This correspondence follows from the cocycle formula: If $\bar x_{\bol v} = (1, c(g, \sigma^{\bol v_2}(x))) = (1, n)$ then $g \cdot \sigma^{\bol v_2}(x) = \sigma^{\bol v_2 + n}(x)$. The arrow at $\bar x_{\bol v}$ points to the cell $\bol v + (1, n)$, which corresponds to the central coordinate of $\sigma^{\bol v_2 + n}(x)$.

If a path visits the same row twice, then it is periodic. Because $g$ is of infinite order, we must find paths that do not revisit any cell in arbitrarily many steps, meaning that the corresponding drawn path does not visit the same row twice.

Now, consider a subshift $W \subset Z \times \{0,1\}^{\Z^2}$ where we allow coloring exactly one of the paths in $Z$ with $1$s. Forbid, in $W$, revisits of the colored path to the same row. By the assumption of the previous paragraph $W$ still contains configurations with a colored path. Now, consider a factor of $W$ where the $Z$-component is removed. This subshift is trace sparse, and thus contains an almost minimal subshift, which must consist of a single uniformly ascending or descending path. By looking at the corresponding $Y$-configuration, the claim is proved.
\end{proof}

Of course the proof adapts easily also to topological full monoids, but we omit the definition. We obtain another proof of a result of \cite{BaKaSa16}. The \emph{torsion problem} of a recursively presented group $G$ is the problem of, given $g \in G$, determining whether the order of $g$ is finite. The topological full group of a full shift (more generally any $\Pi^0_1$ subshift) has an obvious recursive presentation by local rules, see \cite{BaKaSa16}.


\begin{theorem}
\label{thm:BarbieriKariSaloProof}
If $X$ is a full shift, then the torsion problem of the topological full group of $X$ is decidable.
\end{theorem}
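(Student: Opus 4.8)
The plan is to decide the torsion problem by showing that both \emph{torsion} and \emph{infinite order} of a given group element $g$ are semidecidable, and interleaving the two semideciders.

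First, recall that an element $g$ of the topological full group of the full shift $X = \Sigma^{\Z}$ has a finite rule: there are $r \in \N$ and $F : \Sigma^{2r+1} \to \Z$ with $g(x) = \sigma^{F(x_{[-r,r]})}(x)$ for all $x$, so the cocycle $c(g,\cdot)$ is locally constant, $g^n$ is computable, and $c(g^n,\cdot) = \Phi_n(\cdot_{[-R_n,R_n]})$ for some function $\Phi_n$ and bounded radius $R_n$. Now $g$ is torsion iff $g^n = \ID$ for some $n \geq 1$, and $g^n = \ID$ is decidable for each fixed $n$: if $\Phi_n(w) \neq 0$ for some word $w$, then completing $w$ to an aperiodic point $x$ gives $g^n(x) = \sigma^m(x) \neq x$ with $m \neq 0$, so $g^n \neq \ID$; hence $g^n = \ID$ iff $\Phi_n \equiv 0$, which is checked on finitely many words. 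Enumerating $n$ and testing $g^n = \ID$ thus semidecides torsion.

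For infinite order I would invoke Lemma~\ref{lem:TopologicalFullGroup}: if $g$ has infinite order there are $x \in X$ and $k \geq 1$ with $n \mapsto c(g^{kn}, x)$ monotone, and since the highway path produced there is ascending (resp.\ descending), possibly after replacing $k$ by a suitable multiple (subsampling an $m$-ascending sequence) we may assume $n \mapsto c(g^{kn},x)$ is \emph{strictly} monotone on $\N$, say strictly increasing. Conversely, strict monotonicity forces $c(g^{kn},x) \neq 0$ for all $n \geq 1$ (its value at $n=0$ is $c(\ID,x)=0$), so $g^{kn}\neq\ID$ for all $n \geq 1$, so $g$ has infinite order. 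Hence $g$ has infinite order iff for some $k \geq 1$ there is $x \in X$ all of whose \emph{moves} $\mu_i := c(g^{k(i+1)},x) - c(g^{ki},x)$ ($i \geq 0$) are $\geq 1$, or, applying the same to $g^{-k}$, all $\leq -1$. The key point is to make ``such an $x$ exists'' decidable for a fixed $k$. Writing $h = g^k$ with rule radius $\rho$ and $\beta := \max_x|c(h,x)|$, the move $\mu_i$ is the value of $h$'s local rule on the radius-$\rho$ window of $x$ about the checkpoint $d_i := \sum_{j<i}\mu_j$, with $d_0 = 0$; when all $\mu_j \geq 1$ the $d_i$ strictly increase and consecutive checkpoints differ by at most $\beta$. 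Let $H$ be the finite directed graph whose vertices are the words over $\Sigma$ of length $2\rho + \beta + 1$ (long enough to contain the windows at a checkpoint and at the next one), with an edge $v \to v'$ whenever $\mu \in [1,\beta]$ and $v'$ agrees with the length-$\mu$ left shift of $v$ on their overlap, where $\mu$ is the value of $h$'s rule on the central subword of $v$. Then an $x$ with all moves $\geq 1$ exists iff $H$ has an infinite forward path, iff (finiteness of $H$) $H$ contains a cycle; and a cycle is realised by an explicit eventually periodic configuration of $\Sigma^{\Z}$, which lies in $X$ exactly because $X$ is the \emph{full} shift. Searching for cycles in $H$ and in the mirror graph $H^{-}$ (for moves $\leq -1$) is decidable, so infinite order is semidecidable, and the theorem follows.

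The main obstacle I anticipate is the bookkeeping in the last step: one must fix the window length of $H$ so that its edge relation faithfully simulates ``follow $h$'s rule to the next checkpoint'', and check that closing a cycle yields a genuine configuration of $X$ (the one place where fullness of $X$ is essential, since for general subshifts a cycle in $H$ need not lift). The remaining ingredients — computability of $g^n$, decidability of $g^n = \ID$ via aperiodic completions, and the equivalence between strict monotonicity of $n \mapsto c(g^{kn},x)$ and infinite order — are routine once Lemma~\ref{lem:TopologicalFullGroup} is available.
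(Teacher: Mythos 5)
Your proposal is correct and takes essentially the same route as the paper: both interleave a semidecider for torsion (testing $g^n=\ID$ via the locally constant displacement function) with a semidecider for infinite order driven by Lemma~\ref{lem:TopologicalFullGroup}, which supplies a point and a power $k$ along which the cocycle is monotone. Your per-$k$ cycle search in the finite window graph is just an explicit, algorithmic rendering of the paper's pigeonhole step, which extracts an (eventually) periodic configuration that $g^k$ shifts in one direction and observes that the existence of such a certificate is semidecidable.
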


\begin{proof}
Clearly the torsion problem is semidecidable. On the other hand, if $g$ has infinite order, then let $c$ be as above. By the previous lemma there is a point $x \in X$ and $k \geq 1$ such that $n \mapsto c(g^{kn}, x)$ is increasing or decreasing. By the pigeonhole principle, we can find such periodic $x$. By changing a single coordinate in the tail of $x$ that $g$ does not see, we find a spatially eventually periodic but non-periodic configuration that $g$ shifts periodically in one direction. This is clearly semidecidable.
\end{proof}

\subsection{Patterns in primes}

Unlike our main theorem, the one-dimensional version Theorem~\ref{thm:OneDimensionalMainProof} applies very generally (though of course also its conclusion is rather trivial). We show one application of it -- finding `infinite patterns' in primes. Let $N \subset \N$ be arbitrary, and $x_N \in \{0,1\}^\Z$ its characteristic function. We call
\[ X_N = \bigcap_m \overline{\{\sigma^n(x_N) \;|\; n \geq m\}} \]
the \emph{$N$-subshift}; it is always a subshift, and it is the set of two-way infinite configurations whose subwords appear arbitrarily late to the right in the point $x_N$. Setting $N$ to be the set $\mathcal{P}$ of prime numbers we obtain what we call the \emph{prime subshift} $X_{\mathcal{P}}$.

\begin{lemma}
Let $p_i$ be the $i$th prime number. Then 
\begin{equation}
\label{eq:Primes}
\liminf_{n \rightarrow \infty} p_{n+1}-p_n < \infty
\end{equation}
if and only if the prime subshift is not equal to the sunny-side-up subshift.
\end{lemma}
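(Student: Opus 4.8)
The statement relates a combinatorial property of the primes — bounded gaps appearing infinitely often — to the structure of the prime subshift $X_{\mathcal P}$. The plan is to unwind what membership in $X_{\mathcal P}$ means in each direction and show the equivalence directly.

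First I would note that the sunny-side-up subshift $\OC{\ldots 00100 \ldots}$ always embeds in $X_{\mathcal P}$: since there are infinitely many primes, the word $0^m 1 0^m$ occurs arbitrarily late in $x_{\mathcal P}$ for every $m$ (taking any prime $p$ with $p$ and $p\pm 1,\ldots,p\pm m$ all composite except $p$ itself — which happens for large $p$ by the existence of arbitrarily long prime gaps, or more cheaply just by observing that $x_{\mathcal P}$ has density zero so long all-zero blocks exist on both sides of some late $1$), hence every point with at most one nonzero symbol is a valid limit. So the question is exactly whether $X_{\mathcal P}$ contains anything \emph{more} than sunny-side-up, i.e.\ whether it contains a configuration with at least two nonzero symbols.

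For the ``if'' direction, suppose $\liminf (p_{n+1}-p_n) = g < \infty$. Then the two-symbol word consisting of a $1$, followed by $g-1$ cells, followed by a $1$ — or more precisely, for infinitely many $n$ the pattern recording which of the $g+1$ consecutive positions starting at $p_n$ are prime — occurs for arbitrarily large $n$; by the pigeonhole principle one fixed such pattern $w$ of length $g+1$ with at least two $1$'s recurs arbitrarily late, and by compactness some limit configuration containing $w$ lies in $X_{\mathcal P}$. This configuration has two nonzero symbols, so $X_{\mathcal P} \neq \OC{\ldots 00100\ldots}$. For the ``only if'' direction, contrapositively suppose $\liminf(p_{n+1}-p_n) = \infty$, so for every $k$ only finitely many primes $p$ have another prime within distance $k$. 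Then any word $w$ with two $1$'s at distance $\leq k$ occurs only finitely often (not arbitrarily late) in $x_{\mathcal P}$, hence is not a subword of any point of $X_{\mathcal P}$; therefore every point of $X_{\mathcal P}$ has at most one nonzero symbol, and since $0^{\Z}$ and all single-$1$ configurations are forced to be present as above, $X_{\mathcal P} = \OC{\ldots 00100\ldots}$.

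The only mildly delicate point — the ``main obstacle'' such as it is — is making sure the trivial inclusion ``sunny-side-up $\subseteq X_{\mathcal P}$'' is justified without circularity: it uses only that the primes are infinite and of density zero (so that $0^m 1 0^m$ recurs arbitrarily late), which is elementary and independent of any gap hypothesis. Everything else is a direct translation between ``$w$ occurs arbitrarily late in $x_{\mathcal P}$'' and the $\liminf$ condition via pigeonhole and compactness, exactly as in the definition of the $N$-subshift.
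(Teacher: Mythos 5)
Your ``if'' direction and the derivation that, when $\liminf(p_{n+1}-p_n)=\infty$, every point of $X_{\mathcal{P}}$ has at most one nonzero symbol are both fine. The genuine gap is the step you yourself single out as ``mildly delicate'' and then dismiss as elementary: the unconditional claim that $0^m10^m$ occurs arbitrarily late in $x_{\mathcal{P}}$ for every $m$, i.e.\ that the sunny-side-up is contained in $X_{\mathcal{P}}$. Neither of your justifications works. Density zero only gives long blocks of zeros somewhere; it does not give a $1$ flanked by long zero blocks on \emph{both} sides (the characteristic function of $\{2^k,2^k+1\,:\,k\in\N\}$ has density zero, arbitrarily long zero runs, yet no isolated $1$). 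Likewise an arbitrarily long prime gap isolates a prime on one side only -- the prime ending the gap may have another prime two steps to its right -- so ``long gaps'' do not produce primes isolated at scale $m$ on both sides. In fact the paper proves the containment of the sunny-side-up as a separate proposition using Dirichlet's theorem on arithmetic progressions, and explicitly remarks that density-type information is not enough; so this step is genuinely non-elementary, and your contrapositive argument (``liminf $=\infty$ implies $X_{\mathcal{P}}$ \emph{equals} the sunny-side-up'') is incomplete as written, since equality needs exactly this containment.

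The good news is that the gap is repairable in two ways, and the second is how the paper argues. First, in the only place you use the containment you already have the hypothesis $\liminf(p_{n+1}-p_n)=\infty$; under it, for each $m$ only finitely many primes have another prime within distance $m$, so \emph{all sufficiently large} primes are $m$-isolated, $0^m10^m$ does occur arbitrarily late, and the single-$1$ point (hence its orbit closure, the sunny-side-up) lies in $X_{\mathcal{P}}$ -- no number theory beyond infinitude of the primes is needed once you condition on the hypothesis. Second, the paper avoids the containment altogether: if $X_{\mathcal{P}}\neq\OC{\ldots00100\ldots}$, note that $X_{\mathcal{P}}$ is nonempty and not $\{0^\Z\}$ (infinitely many primes plus compactness), and the only subshifts of the sunny-side-up are $\emptyset$, $\{0^\Z\}$ and the sunny-side-up itself; hence some point of $X_{\mathcal{P}}$ contains two $1$s, giving a word $10^{k-1}1$ occurring arbitrarily late and so $\liminf(p_{n+1}-p_n)\leq k$. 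Either repair makes your argument correct; as submitted, the justification of the key containment is wrong.
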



\begin{proof}
Since there are infinitely many primes, $X_{\mathcal{P}}$ is not the all-zero subshift by compactness. If \eqref{eq:Primes} holds, then there is some $k \in \N$ such that $p_{n_i+1} - p_{n_i} = k$ for some sequence $n_i \rightarrow \infty$. Let $y$ be a limit point of the sequence $\sigma^{p_{n_i}}(x)$ as $i \rightarrow \infty$. Then $y_{[0,k]} = 10^{k-1}1$ and $y \in X_{\mathcal{P}}$.

Suppose then that $X_{\mathcal{P}} \neq X_{\leq 1}$. Since $X_{\mathcal{P}}$ is not the all-zero subshift, some point in it contains a word with two ones, and thus a word of the form $10^{k-1}1$. But then $\liminf_{n \rightarrow \infty} p_{n+1}-p_n \leq k$.
\end{proof}

The fact that indeed $\liminf_{n \rightarrow \infty} p_{n+1}-p_n < \infty$ was shown in 2014 in a revolutionary paper by Zhang \cite{Zh14}. Since then, much more has been learned about the words appearing in $X_{\mathcal{P}}$, and in particular \cite{Ma15} shows that $10^n1 \sqsubset X_{\mathcal{P}}$ holds for some $n < 600$. For a discussion of the subshift point of view to the study of divisibility, see \cite{BaKaKuLe15} and references thereof.

The following is a corollary of Theorem A in \cite{BaKaKuLe15}, where it is shown that every \emph{$\mathcal{B}$-free subshift} (see their paper for the definition) is essentially minimal. Our definition of the prime subshift is slightly different from theirs, so we give the proof.

\begin{lemma}
The prime subshift is essentially minimal, and its only minimal subshift is $\{0^\Z\}$.
\end{lemma}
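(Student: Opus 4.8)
The plan is to reduce the whole statement to one essentially number-theoretic input: the primes have arbitrarily small upper density in long intervals, uniformly in the location of the interval. Precisely, I would first prove that for every $L\in\N$ there is an $M\in\N$ such that no interval of $M$ consecutive natural numbers contains $M/L$ or more primes. This is immediate from the sieve of Eratosthenes: by inclusion--exclusion over the first $k$ primes, an interval of length $M$ contains at most $M\prod_{i=1}^{k}(1-1/p_i)+2^{k}$ integers coprime to $p_1\cdots p_k$, hence at most $k+M\prod_{i=1}^{k}(1-1/p_i)+2^{k}$ primes; since $\prod_{i\le k}(1-1/p_i)\to 0$ as $k\to\infty$ (equivalently $\sum_i 1/p_i=\infty$, Euler, via the divergence of the harmonic series), one chooses $k$ with $\prod_{i\le k}(1-1/p_i)<1/(2L)$ and then $M$ large compared with $2^{k}+k$. (One could instead simply quote the prime number theorem, or Brun--Titchmarsh.)

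Granting this, the argument is short. First, $0^{\Z}\in X_{\mathcal{P}}$: for every $m>L$ the integers $m!+2,\dots,m!+m$ are composite, so the word $0^{L}$ occurs in $x_{\mathcal{P}}$ arbitrarily far to the right; since this holds for all $L$, the point $0^{\Z}$ lies in $X_{\mathcal{P}}$, and $\{0^{\Z}\}$ is a minimal subsystem. The key step is then to show that \emph{every} point $x\in X_{\mathcal{P}}$ contains arbitrarily long runs of $0$. Suppose some $x\in X_{\mathcal{P}}$ did not contain $0^{L}$; then a $1$ occurs in every window of length $L$ in $x$, so every subword $w$ of $x$ of length $M=NL$ contains at least $N=M/L$ ones. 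But $w$ occurs in $x_{\mathcal{P}}$ by definition of $X_{\mathcal{P}}$, say $x_{\mathcal{P}}|_{[a,a+M-1]}=w$, so the interval $[a,a+M-1]$ contains at least $M/L$ primes; taking $M=M(L)$ as in the previous paragraph yields a contradiction. Hence $x$ contains $0^{L}$ for all $L$, so $0^{\Z}$ lies in the orbit closure of $x$.

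To finish, since $0^{\Z}$ lies in the orbit closure of every point of $X_{\mathcal{P}}$, every nonempty closed shift-invariant subset of $X_{\mathcal{P}}$ contains $0^{\Z}$; therefore a minimal such subset must equal $\{0^{\Z}\}$, so $\{0^{\Z}\}$ is the unique minimal subsystem of $X_{\mathcal{P}}$, which is exactly the assertion that the prime subshift is essentially minimal with only minimal subshift $\{0^{\Z}\}$. I expect the only real obstacle to be the number-theoretic lemma, and even it is elementary via the sieve; the dynamical half is a one-line count. The one point to handle carefully is the order of quantifiers: one fixes $L$ first, then produces $M=M(L)$, and only then uses a single occurrence of a length-$M$ subword of $x$ inside $x_{\mathcal{P}}$, so for this direction no appeal to occurrences being ``arbitrarily far to the right'' is actually needed, merely that a subword of a point of $X_{\mathcal{P}}$ occurs somewhere in $x_{\mathcal{P}}$.
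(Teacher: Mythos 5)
Your proof is correct, but it runs along a genuinely different route from the paper's. The paper never estimates how \emph{many} primes an interval can contain; instead it uses the Chinese remainder theorem to exhibit, for each $n$, an arithmetic progression of positions $k$ with $k+i\equiv 0 \bmod \phi(i)$ for distinct primes $\phi(i)$, so that $0^n$ occurs in $x_{\mathcal{P}}$ and, once it occurs at $[k,k+n-1]$ with witnesses $p_i\mid k+i$, recurs with period $\prod_i p_i$; hence $0^n$ occurs syndetically in $x_{\mathcal{P}}$, and every sufficiently long subword of any point of $X_{\mathcal{P}}$ contains $0^n$, giving the same dynamical conclusion you reach. You instead prove a uniform-in-position upper bound on the number of primes in intervals of length $M$ (an Eratosthenes--Legendre sieve estimate, relying on $\prod_{i\le k}(1-1/p_i)\to 0$), and derive a contradiction from a point of $X_{\mathcal{P}}$ avoiding $0^L$; your counting and the reduction to a single occurrence of a long subword in $x_{\mathcal{P}}$ are fine, as is the final passage from ``$0^\Z$ is in every orbit closure'' to uniqueness of the minimal subsystem. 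Two remarks on the comparison. First, the paper explicitly cautions that $\pi(n)\sim n/\log n$ ``or any other density-related statement is not enough,'' illustrated by inserting full intervals into the primes sparsely; your lemma does not contradict this, because it is not an asymptotic density statement but a bound uniform over the location of the interval (exactly what the inserted intervals destroy), so your argument in fact isolates the right quantitative hypothesis: it applies verbatim to any $N\subset\N$ whose characteristic sequence has uniformly small density in long windows (upper Banach density zero), which is more general than the primes. Second, what the paper's CRT argument buys in exchange is softness: it uses only the Chinese remainder theorem and the infinitude of the primes (a point the author emphasizes later in the section), and it yields an explicit syndeticity gap $\prod_i p_i + 2n$ reflecting the periodic structure of the zero set, whereas your route needs Euler's divergence of $\sum 1/p$ (or the harmonic series bound) on top of inclusion--exclusion. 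Both proofs are complete; yours is a legitimate and somewhat more robust alternative.
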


\begin{proof}
Let $n$ be arbitrary. Let $I = \{0,1,\ldots,n\}$ and let $\phi : I \to \mathcal{P}$ be an injection from $I$ into the prime numbers. Let $N = \prod_{i \in I} \phi(i)$. Let $k = \sum_{i \in I} a_i N / \phi(i)$ where the $a_i$ are chosen so that $k \equiv -i \bmod \phi(i)$ for all $i \in I$.
Then $k+i \equiv i-i \equiv 0 \bmod \phi(i)$ for all $i \in I$. It follows that
$x_{\mathcal{P}}$ contains the word $0^n$ for arbitrarily large $n$. If $(x_{\mathcal{P}})_{[k,k+n-1]} = 0^n$, then $k+i$ is divisible by $p_i$ for $i \in [0,n-1]$ for some prime $p_i$. It follows that $0^n$ occurs in every subword of $x_{\mathcal{P}}$ of length $\prod_{i = 0}^{n-1} p_i + 2n$.
\end{proof}


For a set $N \subset \N$, write $\pi_N(n) = |N \cap [0,n]|$, and define $\pi = \pi_{\mathcal{P}}$.
We note that the prime number theorem $\pi(n) \sim n/\log n$ or any other density-related statement is \emph{not} enough on its own, and it is rather the periodic structure of the zeros that is important -- by perturbing the primes by inserting intervals $[\ell,\ell+n]$ very sparsely, but for arbitrarily large $n$, one obtains a set $N \subset \N$ with $\pi_N \sim \pi$ such that $X_N$ is not essentially minimal. By alternating long intervals of zeroes and ones, it is also possible to build sets $N \subset \N$ such that $\pi_N \sim \pi$ and $X_N = \OC{...000111...} \cup \OC{...111000...}$.

\begin{proposition}
\label{prop:Primes}
The prime subshift contains either a finite point or a \froctal{}.
\end{proposition}

\begin{proof}
By the previous lemma, $X_{\mathcal{P}}$ is essentially minimal with $\{0^\Z\}$ the only minimal subsystem. By Lemma~\ref{lem:RowsparseContainsAlmostMinimal}, $X_{\mathcal{P}}$ contains an almost minimal subsystem, and the claim follows from Theorem~\ref{thm:OneDimensionalMainProof}.
\end{proof}

What is interesting about the proposition is mainly that we obtained it for essentially abstract reasons from only the Chinese remainder theorem and the infinitude of the primes, while proving that $X_{\mathcal{P}}$ contains a particular finite point or \froctal{} is quite difficult from first principles.

We do not know whether the prime subshift contains a \froctal{}. If it did, $X_{\mathcal{P}}$ would be uncountable. We note that at least the conclusion of Theorem~1.2 of \cite{Ma15} allows the prime subshift to be countable, as it is easy to show that the subshift
\[ X = \{ x \in \{0,1\}^\Z \;|\; (|x| \geq n \wedge x_i = x_j = 1) \implies (i = j \vee |i-j| \geq n) \}, \]
where $|x|$ is the number of nonzero symbols in $x$, satisfies its conclusion, in the sense that if you pick for a large enough set of integers, then a randomly chosen $m$-tuple taken from those integres will with high probability be the support of a legal configuration of $X$. (The same is true even if one further restricts $X$ to contain only admissible patterns.)

We remark that one \emph{can} show that $X_{\mathcal{P}}$ contains $X_{\leq 1}$ using a more sophisticated but standard number-theoretic tool, namely Dirichlet's theorem on arithmetic progressions.

\begin{proposition}
\label{prop:SunnyInPrimes}
The prime subshift contains the sunny-side-up subshift.
\end{proposition}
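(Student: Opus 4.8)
The plan is to show that for every $k \in \N$, the prime subshift $X_{\mathcal{P}}$ contains a configuration with at least two ones at distance exactly... well, more precisely, that $X_{\mathcal{P}}$ contains a point whose support is all of $\Z$ minus a single coordinate — no, that would be the complement of sunny-side-up, which is not what we want. Let me restate: we want to show $X_{\mathcal{P}}$ contains a point with exactly one nonzero symbol, i.e.\ the point $\cdots 0\,0\,1\,0\,0\cdots$, since its orbit closure is precisely the sunny-side-up subshift. Equivalently, by compactness, it suffices to show that for every $n$, the word $0^n 1 0^n$ occurs in $x_{\mathcal{P}}$ arbitrarily far to the right: then a limit point of suitable shifts of $x_{\mathcal{P}}$ is the point with a single one, and this limit point lies in $X_{\mathcal{P}}$ by definition.

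First I would fix $n$ and look for a prime $q$ such that the $2n$ integers $q-n,\dots,q-1,q+1,\dots,q+n$ are all composite, and moreover such a configuration recurs arbitrarily late. The natural tool is Dirichlet's theorem: I would choose a modulus $M$ divisible by many small primes so that $q+i$ is forced composite for each $i \in [-n,n]\setminus\{0\}$ by a congruence condition, while the residue class of $q$ modulo $M$ is coprime to $M$, so that Dirichlet's theorem supplies infinitely many primes $q$ in that class. Concretely: pick $2n$ distinct primes $p_{-n},\dots,p_{-1},p_1,\dots,p_n$, all larger than $2n$ (hence not dividing any difference $i-j$), set $M = \prod_{i} p_i$ times a small correction factor to guarantee $q$ itself is not among these primes and stays large, and solve by the Chinese remainder theorem for a residue $r \bmod M$ with $r \equiv -i \pmod{p_i}$ for each $i \neq 0$. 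Then $p_i \mid q+i$ for every prime $q \equiv r \pmod M$, so $q+i$ is composite (being $> p_i$), while $\gcd(r, M)$ can be arranged to equal $1$ by choosing the $p_i$ to avoid dividing $r$ — this requires a small amount of care, possibly adjusting $r$ by a multiple of $M$, or instead working modulo $M' = M \cdot p_0$ for one more auxiliary prime $p_0$ and using $\gcd(r,M')=1$. By Dirichlet's theorem there are infinitely many primes $q \equiv r \pmod{M'}$, in particular arbitrarily large ones.

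Given such arbitrarily large primes $q$, the word of $x_{\mathcal{P}}$ occupying positions $[q-n, q+n]$ is exactly $0^n 1 0^n$ (the central coordinate is prime, the $2n$ neighbors are composite). Since this happens for arbitrarily large $q$, the word $0^n 1 0^n$ appears arbitrarily far to the right in $x_{\mathcal{P}}$. Taking $\sigma_q(x_{\mathcal{P}})$ and passing to a convergent subsequence (using $n \to \infty$ and a diagonal argument, or directly noting that every such word forces the nearest surrounding structure), the limit point has a single nonzero symbol at the origin and belongs to $X_{\mathcal{P}}$. Its orbit closure is the sunny-side-up subshift, which is therefore contained in $X_{\mathcal{P}}$.

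The main obstacle is purely bookkeeping in the congruence setup: ensuring simultaneously that (i) the forced divisors $p_i$ are large enough that $q+i$ is genuinely composite rather than equal to $p_i$, (ii) the target residue class is coprime to the modulus so Dirichlet applies, and (iii) $q$ itself is not accidentally one of the chosen $p_i$. All three are handled by taking the $p_i$ sufficiently large and, if needed, enlarging the modulus by an extra prime factor; none of this is deep, but it is the only place the argument can go wrong. Everything else is immediate from the definition of $X_N$ and compactness.
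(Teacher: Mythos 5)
Your proposal is correct and follows essentially the same route as the paper: use the Chinese remainder theorem to fix a residue class in which the $2n$ neighbours of any prime are forced composite, invoke Dirichlet's theorem to find arbitrarily large primes in that class (which also resolves your worries (i) and (iii)), and pass to a limit to place the single-one point, hence the sunny-side-up subshift, inside $X_{\mathcal{P}}$. The coprimality in (ii) is automatic once the auxiliary primes exceed $2n$, since then $-i \not\equiv 0$ modulo each of them, so no extra adjustment is needed.
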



\begin{proof}
Let $n$ be arbitrary. Let $I = \{-n,\ldots,-1\} \cup \{1,\ldots,n\}$ and let $\phi : I \to \mathcal{P}$ be an injection from $I$ to the prime numbers all larger than $2n$. Let $N = \prod_{i \in I} \phi(i)$. Let $k = \sum_{i \in I} a_i N / \phi(i)$ where the $a_i$ are chosen so that $k \equiv i \bmod \phi(i)$ for all $i \in I$. Then $k$ and $N$ are coprime, so by Dirichlet's theorem on arithmetic progressions, there is a prime number $p = k + \ell N$ for some $\ell \in \N$. Then $p-i \equiv i-i \equiv 0 \bmod \phi(i)$ for $i \in I$. It follows that the characteristic sequence of the primes contains the word $0^n 1 0^n$ around the prime $p$. Taking a suitable limit, we obtain that $X_{\leq 1} \subset X_{\mathcal{P}}$.
\end{proof}

\section{Examples}
\label{sec:Examples}

In this section, we give some examples of almost minimal and sparse subshifts, to illustrate to what extent the various assumptions are needed in Theorem~\ref{thm:Main}, and in particular show what kind of behaviors can happen in sofic shifts. We construct these examples using existing constructions of sofic shifts as black-boxes. In particular we use the result of Mozes \cite{Mo89} that fixed-point subshifts of many substitutions (in particular rectangular deterministic ones) are sofic, and the result of various authors that traces of vertically constant sofics can be arbitrary computable subshifts \cite{Ho09,DuRoSh10,AuSa13}. We also give some examples of path spaces.

We note that many examples of subshifts (in particular sofic shifts and SFTs) with bounded traces are constructed in \cite{SaTo12c,SaTo13}, and bounded subshifts can be turned into sparse ones by blocking and factoring (see the proof of Theorem~\ref{thm:FullPavlovSchraudnerProof}). The proof of Lemma~\ref{lem:TopologicalFullGroup} shows that every element of a topological full group of a one-dimensional subshift gives an example of a trace sparse two-dimensional subshift.

\subsection{Subshifts}



The following is our first example of a trace sparse \froctal{}, and thus shows that in Theorem~\ref{thm:Main}, the case of \froctals{} must indeed be included. Less trivially, it also shows that while essential trace sparseness is always $1$ in the cases of finite configurations and path configurations, the essential trace sparseness of an almost minimal trace sparse subshift can in general be arbitrarily large and can match the sparseness constant.

\begin{example}
\label{ex:UnboundedRows}
For every $k$, there exists an almost minimal two-dimensional sofic subshift $X \subset \{0,1\}^{\Z^2}$ with $k$-sparse trace such that every nonzero configuration has essential trace sparseness $k$.
\end{example}

See Figure~\ref{fig:SparseBlobFractal} for an illustration of a typical pattern in the subshift $X$ we construct.

\begin{proof}
For $k = 1$, the two-dimensional sunny-side-up subshift has this property.

Let $k \geq 2$. We inductively construct $k$ patterns $P_{i,1}, \ldots, P_{i,k}$ of shape $[0,m_i-1]^2$ such that the bottom row of each contains exactly one $1$, which is at the bottom left corner: $(P_{i,j})_{0,0} = 1$. The patterns also satisfy that if $j \neq j'$ then the only row that is nonzero in both $P_{i,j}$ and $P_{i,j'}$ is row $0$ and that no row of $P_{i,j}$ contains more than $k$ nonzero symbols. Furthermore, each of the patterns $P_{i,j}$ has at least one row with $k$ nonzero symbols that are pairwise separated by at least distance $m_{i-1}$.

Pick as the patterns $P_{1,j}$ the following matrices (using $P_{0,j'} = 1$ for all $j'$), and observe that the assumptions are then satisfied for $i = 1$. Now, for $j \in \{1,\ldots,k\}$, build $P_{i+1,j}$ from the patterns $P_{i,1}, \ldots, P_{i,k}$ as follows. First, set $m_{i+1} = (k+1)m_i$, so that $[0,m_{i+1}-1]^2$ partitions into a $(k+1) \times (k+1)$ grid of translates of the squares $[0,m_i-1]^2$. If $i \geq 1$, we define 
\[ P_{i+1,j} = \begin{bmatrix}
0 & 0 & 0 & \cdots & 0 \\
\vdots & \vdots & \vdots & \ddots & \vdots \\
0 & 0 & 0 & \cdots & 0 \\
0 & P_{i,1} & P_{i,2} & \cdots & P_{i,k} \\
0 & 0 & 0 & \cdots & 0 \\
\vdots & \vdots & \vdots & \ddots & \vdots \\
0 & 0 & 0 & \cdots & 0 \\
P_{i,j} & 0 & 0 & \cdots & 0
\end{bmatrix}\]
as the block matrix where the slice $0 \; P_{i,1} \; P_{i,2} \; \cdots \; P_{i,k}$ appears on the $j$th row from the bottom. It is easy to see that the induction hypothesis is satisfied. Observe that this is simply a $(k+1)$-by-$(k+1)$ substitution.

Define $X$ as the limit of the patterns $P_{i,1}$. More precisely, take the subshift containing those configurations $x$ such that for all finite $D \subset \Z^2$, $x|_D$ is a subpattern of $P_{i,1}$ for some $i$. Since every nonzero pattern occurring in $X$ occurs in one of the patterns $P_{i,1}$ by definition, we can conclude that its trace is $k$-sparse (by the inductive assumption).

Now we prove\footnote{This mimics the standard argument for the minimality of a primitive (uniform) one-dimensional substitution \cite[Proposition~4.15]{Ku03} and only needs the ``almost primitivity'' of the substitution. The number $3$ in ``$m_{i-3}$-by-$m_{i-3}$'' in the proof plays the role of the usual ``delay until small patterns that eventually appear start appearing''.} almost minimality. Obviously the all-zero point is in the subshift. We next note that every $m_i$-by-$m_i$ block (not necessarily $(0 \mod m_i)$-aligned) that appears in some $P_{i',1}$ is actually contained already in $P_{i+2,1}$. This follows from the fact that all $(0 \mod m_i)$-aligned $2m_i$-by-$2m_i$ blocks in $P_{i+3,1}$ already appear in $P_{i+2,1}$, which is clear from drawing the support of $P_{i+3,1}$. If a $(0 \mod m_i)$-aligned $2m_i$-by-$2m_i$ subpattern $Q$ in $X$ has a nonzero cell, then since every configuration consists of a $(0 \mod m_i)$-aligned grid of empty blocks and $P_{i,j}$-blocks for varying $j$, $Q$ contains some pattern $P_{i,j'}$ entirely. It follows that it contains every pattern $P_{i-1,j''}$ for $j'' \in \{1,2,...,k\}$, thus every $m_{i-3}$-by-$m_{i-3}$ block that appears in $X$ by the previous argument.

The previous argument also shows that, for every $i$, every configuration contains a row with $k$ ones with pairwise distances at least $m_{i-1}$, at a bounded distance from every nonzero symbol. Since $m_{i-1} \rightarrow \infty$, the subshift has essential trace sparseness $k$.

Finally, we observe that $X$ is sofic by Mozes' theorem \cite{Mo89}, since it is defined by a deterministic primitive substitution.
\end{proof}

%

\begin{figure}[h!]
    \begin{center}
\begin{tikzpicture}

\draw[color=black!7!white,step=1.07,xshift=0.5,yshift=0,line width=0.5] (0,0) grid (9.64,6.42);
\draw[color=black!15!white,step=3.21,xshift=0.5,yshift=0,line width=1] (0,0) grid (9.64,6.42);
    \node[anchor=south west,inner sep=0,scale=1.5] at (0,0) {\includegraphics
{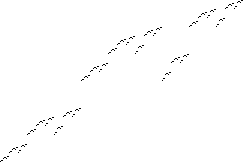}};
    
\end{tikzpicture}


        \end{center}
        \caption{The pattern $P_{5,1}$ for $k = 2$ in Example~\ref{ex:UnboundedRows} (top third cut off).}
        \label{fig:SparseBlobFractal}
    \end{figure}

There are certainly sparse subshifts that are not almost minimal. Thus the assumption of almost minimality is necessary in Theorem~\ref{thm:WhatminimalCases}. The assumption of trace sparseness is also necessary:

\begin{example}
\label{ex:InfiniteComponent}
There is an almost minimal binary sofic $\Z^2$-subshift where the connected component of every $1$ is infinite, but which does not contain a highway in any direction.
\end{example}

\begin{proof}
Use the substitution
\begin{tikzpicture}[baseline = 0]
\fill (0,0) rectangle (0.2,0.2);
\end{tikzpicture}
$\mapsto$
\begin{tikzpicture}[baseline = 0]
\fill (0,0) rectangle (0.2,0.2);
\fill (0,0.2) rectangle (0.2,0.4);
\fill (0.2,0) rectangle (0.4,0.2);
\fill (-0.2,0) rectangle (0,0.2);
\fill (0,-0.2) rectangle (0.2,0);
\end{tikzpicture}
. See Figure~\ref{fig:InfiniteComponent}. Almost minimality is shown as in the previous example. To see the other properties, observe there is an exponential, thus superlinear, lower bound on the number of black cells connected to it in any ball centered at a black cell.
\end{proof}

\begin{figure}[h!]
    \begin{center}
        \includegraphics[scale=0.8]{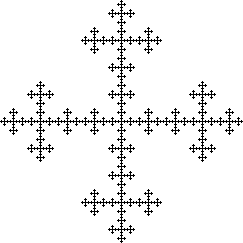}
        \end{center}
        \caption{A pattern from the almost minimal subshift of Example~\ref{ex:InfiniteComponent}.}
        \label{fig:InfiniteComponent}
    \end{figure}

By varying the substitution, it is even possible to make nonzero symbols appear with positive density on every nonzero line (showing that almost minimal subshifts can support nontrivial shift-invariant probability measures).

\begin{example}
For $n \in \N$ define the substitution $\tau_n(0) = 0^{2^n}$ and $\tau_n(1) = 1^{2^n-1} 0$. Consider $w_k = \tau_2(\cdots\tau_k(1)\cdots)$. This word is of length $\prod_{i = 2}^k 2^i$ and the number of $1$-symbols in it is $\prod_{i = 2}^k(2^i - 1)$, so the density of nonzero symbols in it is
\[ \prod_{i = 2}^k (1 - 1/2^i) \geq 1 - \sum_{i = 2}^k 1/2^i > 1/2. \]
Moreover, if $\ell \geq k$, every nonzero symbol occurring in $w_\ell$ is contained in a copy of $w_k$, and thus the ball of radius $|w_k|$ around it has nonzero symbols with upper Banach density at least $1/4$. It follows that in the limit as $k \rightarrow \infty$ we obtain a subshift where every point containing a nonzero symbol contains nonzero symbols with upper Banach density at least $1/4$.
\qee
\end{example}






An SFT containing a road must contain a periodic road by the pigeonhole principle. We now show that sofic shifts can consist of computationally complex ascending paths.

\begin{proposition}
\label{prop:PathsSofic}
Let $X$ be a two-dimensional $\Pi^0_1$-subshift where every nonzero configuration is an $r$-road with uniform ascension constant. Then $X$ is sofic.
\end{proposition}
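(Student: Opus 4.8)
The plan is to build an SFT cover of $X$ directly. The key structural fact is that a configuration in $X$ has a support that is a single bi-infinite uniformly ascending $r$-path; being a $\Pi^0_1$ subshift means $X$ is defined by a recursively enumerable set of forbidden patterns. I would first recode so that the path-structure is visible locally: since the support of every configuration is an $r$-path, I can pass to a conjugate subshift where on each nonzero cell we additionally record the ``incoming'' and ``outgoing'' offsets in $[-r,r]^2$ (this is exactly the data of the marked path cover $\MPC_r(X)$ restricted to the orbit, which is well-defined and conjugate to $X$ here since there is a unique path through each nonzero cell). Uniform ascension gives a bound $m$ such that over any $m$ consecutive path-steps the height strictly increases; this too is a local condition on the recoded configuration. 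After this recoding, the subshift lives on a finite alphabet and the only genuinely global constraint is the $\Pi^0_1$ condition inherited from $X$, together with the requirement that the recorded offsets trace a consistent bi-infinite path.

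Next I would simulate the recursive enumeration of forbidden patterns. The standard tool here is that a deterministic Turing machine computation can be embedded in a two-dimensional SFT, and more specifically — since the support of the configuration is an ascending path moving through a syndetic set of rows — I can carry a ``computation zone'' alongside the path: the path itself, as it ascends, provides a sequence of nearly-horizontal rows spaced boundedly apart, and in the zero region between path segments I can run the enumeration of forbidden patterns of $X$ and check, via the recorded local data, that no forbidden pattern occurs near the path. Because the path has bounded width and bounded ascension speed, the part of any configuration of $X$ lying in a given horizontal strip is determined by boundedly much information (the position and local shape of the path), so checking all forbidden patterns reduces to checking a recursively enumerable family of \emph{finite} local conditions, each anchored to a bounded window around the path. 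This is precisely the situation handled by Mozes-style constructions / the Aubrun--Sablik--Durand--Romashchenko--Shen machinery cited in Section~\ref{sec:Examples}, which show that any effectively closed (i.e.\ $\Pi^0_1$) condition on a one-dimensional ``skeleton'' can be enforced soficly in two dimensions.

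Concretely, I would invoke the result of Durand--Romashchenko--Shen / Aubrun--Sablik (as used elsewhere in the paper) in the following form: the set of bi-infinite uniformly ascending $r$-paths with ascension bound $m$, decorated by a $\Pi^0_1$ ``admissibility'' predicate on the sequence of local path-shapes, is the projective subdynamics (along the path) of a two-dimensional sofic shift; equivalently, one builds an SFT $Y$ that simulates a machine enumerating the forbidden patterns and synchronizes it to the path drawn on the configuration, and lets the factor map erase everything except the $X$-layer. A configuration of $X$ lifts to $Y$ by running the machine correctly along its path; conversely any configuration of $Y$ projects to a configuration whose path avoids all enumerated forbidden patterns, hence lies in $X$. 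The main obstacle is the bookkeeping of this synchronization: one must ensure the computation zone can always be fit into the (boundedly wide, boundedly sloped) empty space accompanying the path, and that the enumeration has enough ``time'' — this is handled by the usual trick of nesting the computation in a self-similar / substitutive hierarchy (as in Mozes and in the proof of Example~\ref{ex:UnboundedRows}) so that arbitrarily long computations are eventually performed. I would therefore present the argument as a reduction to those known sofic-simulation theorems rather than rebuild the Turing-machine encoding from scratch.
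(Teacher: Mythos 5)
There is a genuine gap at the heart of your argument: the simulation theorem you invoke does not exist in the form you need. The results of Hochman, Durand--Romashchenko--Shen and Aubrun--Sablik realize an arbitrary $\Pi^0_1$ one-dimensional subshift as the projective subdynamics of a sofic shift \emph{along a lattice direction} (rows or columns), where the simulated data is constant along the transverse direction and the computation lives on a rigid, self-similar grid. Your proposed form --- that ascending $r$-paths decorated by a $\Pi^0_1$ admissibility predicate \emph{indexed along the path} can be enforced soficly, by running computation zones in the empty space accompanying the path --- is precisely the statement that needs proof, not an off-the-shelf corollary. Coupling a hierarchical computation to a configuration-dependent, wandering skeleton is the hard part (the path is not locally locatable relative to any fixed self-similar structure), and waving at ``Mozes-style nesting'' does not supply the synchronization; this is where your proof would have to do essentially all of the work and where it currently fails.

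The paper's proof avoids this entirely by exploiting uniform ascension to convert path-indexed data into \emph{row-indexed} data, which is exactly the setting the known theorems handle. Concretely, one builds a one-dimensional $\Pi^0_1$ guiding subshift $Y$ over an alphabet recording, for each row the path visits, the offset to the successor row, a horizontal displacement, and a bounded window of the configuration near the path (with a symbol $\#$ on skipped rows); ascension with constant $m$ guarantees the path meets a syndetic set of rows, so this is well defined. The two-dimensional subshift with constant rows and columns in $Y$ is sofic by the cited realization results; on top of it one overlays the sunny-side-up (at most one marked cell per non-$\#$ row) and purely local SFT rules forcing the marked cells to follow the recorded offsets, and then a local factor map reprints the recorded windows and erases the scaffolding. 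Choosing $Y$ to forbid exactly those direction-and-window sequences whose image would contain a forbidden pattern of $X$ gives $\phi(W)=X$. If you want to salvage your approach, the fix is this transfer step: index the $\Pi^0_1$ constraint by the vertical coordinate rather than by path steps, so that the only machinery you need beyond the known theorems is local.
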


\begin{proof}
Let $\{0\} \cup \Sigma_+$ be the alphabet of $X$. Let $r$ be such that the nonzero support of every nonzero configuration is up to translation the range of an $r$-path with ascension constant $m$ (so starting from $\bol v$, the path stays above $\bol v$ after $m$ steps). Let $n = rm$.


Let $Y$ be a one-dimensional $\Pi^0_1$-subshift over the alphabet $D = \{\#\} \cup ([-n,n] \times [1,n] \times (\{0\} \cup \Sigma_+)^{[-n,n]^2})$. From such a subshift, we construct a $\Z^2$-subshift where the points of $Y$ give directions for an ascending path, as follows. Let $Z$ be the two-dimensional subshift over $D$ where all rows are constant and columns are points of $Y$. On every non-$\#$ row $i$, if the number in the $[1,n]$-component is $a$, then call $i+a$ the \emph{successor row} of row $i$. Require that the successor of every row is non-$\#$, the rows between a row an its successor contain $\#$, and that every row has a predecessor. Thus, the $[1,n]$-components describe a vertical (strictly) ascending path on the configuration.

Now, add a binary layer where on each row not containing $\#$, at most one cell contains $1$, and rows containing $\#$ are all-$0$. In other words, we overlay the sunny-side-up $X_{\leq 1}$ on the non-$\#$-rows. Call cells containing $1$ on this layer \emph{marked cells}. If on the $Z$-layer the $([-n,n] \times [1,n])$-component contains $\bol u$ at $\bol v$, then $\bol v + \bol u$ is called the \emph{successor} of the cell $\bol v$. We require that $v$ is marked if and only if its successor is marked.

At this point, we have a subshift with at most one single ascending $n$-path on the binary layer, which must travel according to the directions given on the $Z$-layer. Now, consider the factor map $\phi$ defined as follows: If $\bol v$ contains $1$ on the binary layer, then write the nonzero support of the pattern in the $(\{0\} \cup \Sigma_+)^{[-n,n]^2}$-component around $\bol v$. Write $0$ in every remaining cell. We add to our subshift the final SFT constraint that this factor map is well-defined. We obtain a subshift $W$ that has as a factor a subshift over the alphabet $(\{0\} \cup \Sigma_+)$ where the nonzero support of every configuration is the range of a $3n$-path, whose movement is guided by $Y$.

Now, observe that for every $Y$, the subshift $W$ constructed above is a sofic shift. For this, simply observe that $Z$ is sofic by \cite{Ho09,DuRoSh10,AuSa13}, that checking that every row contains at most one binary symbol is doable since $X_{\leq 1}$ is a sofic shift, and that all other constraints we added were local. Of course, then also $\phi(W)$ is sofic.

Finally, we need to show that some $\Pi^0_1$-subshift $Y$ yields $\phi(W) = X$. For this, simply define $Y$ as the $\Pi^0_1$-subshift where for every forbidden pattern of $X$ we forbid every set of directions that would yield a forbidden pattern of $X$ in the $\phi$-image. Then $\phi(W) \subset X$ because any forbidden pattern would have been traced by a path, guided by a word $w$ of $Y$ and then $w$ would have been forbidden in $Y$. On the other hand, for any configuration $x$ of $X$, one can easily find an $n$-path through the configuration and construct the corresponding $(\{0\} \cup \Sigma_+)^{[-n,n]^2}$-patterns, to obtain a legal guiding sequence in $Y$.
\end{proof}

It is easy to construct minimal $\Pi^0_1$ path spaces without a well-defined linear direction, and thus we obtain the following corollary.


\begin{example}
There exists a two-dimensional sofic shift $X$ whose configurations are $r$-roads with uniform ascension rate, but are not periodic and their support does not fit any strip of the form $L + B_r(0)$ where $L$ is a straight line. \qee
\end{example}

Similarly by using Sturmian subshifts with computable angles as the guiding sequences, we obtain sofic shifts where the nonzero support of every configuration does fit a straight strip $L + B_r(0)$, but no such rational line. With a similar idea, we also obtain an example of a sofic shift with a bounded trace, and with only four nondeterministic directions, two of which are irrational.

\begin{example}
\label{ex:BoundedDeterministicSubshift}
Let $X$ be a Sturmian subshift (with any computable irrational angle). Let $Y$ be the two-dimensional sofic shift where each row is constant, and columns are points of $X$. Construct $Z \subset Y \times \{0,1\}^{\Z^2}$ similarly as in Proposition~\ref{prop:PathsSofic}: allow at most one $1$ on each row, and if $(y, z) \in Z$ and $z_{\bol v} = 1$, require $z_{\bol v + (y_{\bol v},1)} = 1$ and that either $z_{\bol v + (0,-1)} = 1$ or $z_{\bol v + (-1,0)} = 1$. See Figure~\ref{fig:Sturmian}.

Now, we claim that the nondeterministic directions are precisely the vertical ones and the ones parallel to the irrational slope.\footnote{We refer to the slope as it appears graphically. This is also the slope of the unique path drawn in $Z$ if angle is interpreted suitably: if $X$ corresponds to a \emph{mechanical word} (see \cite{Lo02}) of slope $\alpha \in (0, 1)$, then the path will always stay at a bounded distance form some translate of the line $\R(\alpha, 1)$.} First, both of them are easily seen to be nondeterministic, since in the vertical directions it is impossible the know the continuation of the $Y$-component, and in a direction orthogonal to the path (as it appears in configurations), it is impossible to know whether the continuation of the half-plane eventually hits the path.

In half-planes of any other slope, we see the color of every column, and thus the contents of the half-plane determines the $Y$-component. If the path visits the half-plane, we can uniquely fill its trajectory based on the $Y$-component, and every path that fits in a strip $L + B_r(0)$ where $L$ is a straight line visits every half-plane not parallel to $L$. \qee
\end{example}

\begin{figure}
\begin{center}
\begin{tikzpicture}[scale = 0.5]
\draw [black!30!white,fill] (0,9) rectangle (18,10);
\draw [black!30!white,fill] (0,8) rectangle (18,9);
\draw [black!30!white,fill] (0,6) rectangle (18,7);
\draw [black!30!white,fill] (0,4) rectangle (18,5);
\draw [black!30!white,fill] (0,3) rectangle (18,4);
\draw [black!30!white,fill] (0,1) rectangle (18,2);
\draw (0,0) grid (18,10);
\node () at (6.5,0.5) {1};
\node () at (6.5,1.5) {1};
\node () at (7.5,2.5) {1};
\node () at (7.5,3.5) {1};
\node () at (8.5,4.5) {1};
\node () at (9.5,5.5) {1};
\node () at (9.5,6.5) {1};
\node () at (10.5,7.5) {1};
\node () at (10.5,8.5) {1};
\node () at (11.5,9.5) {1};
\end{tikzpicture}
\end{center}
\caption{A configuration of the subshift $X$ in Example~\ref{ex:BoundedDeterministicSubshift}. Gray cells denote $1$s of the $Y$-component, $1$s denote the $1$s of the second component.}
\label{fig:Sturmian}
\end{figure}

If we project away the guiding rows $Y$, it is tempting to think that the only nondeterministic directions are the ones parallel to the irrational line, but this is not the case. If the guiding rows are removed, $Z$ becomes sparse, so if it had a deterministic direction then it would have a singly periodic point by Proposition~\ref{prop:DeterministicContainsSinglyPeriodic}, so in fact \emph{every} direction becomes nondeterministic and nonexpansive when the guiding rows are removed. Subshifts with a single irrational nonexpansive direction are not easy to construct \cite{Ho11}, and making such sofic shifts and SFTs is another challenge still \cite{Zi16}. 

We do not believe it is essential that the paths be ascending in Proposition~\ref{prop:PathsSofic}, as every row carries little enough information (the finitely many bounded offsets of finitely many paths) that it can be carried on each row. However, adapting the proof of the theorem above to this general case seems to require additional tricks, if it is possible at all, and perhaps one would need a radically different idea.


\begin{conjecture}
If $X$ is a uniformly trace sparse $\Pi^0_1$-subshift where the nonzero support of every nontrivial configuration is the range of an $r$-path up to translation, then $X$ is sofic.
\end{conjecture}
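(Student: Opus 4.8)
The plan is to push the proof of Proposition~\ref{prop:PathsSofic} as far as it goes and to pin down exactly where ascension is used. As there, I would first build a $\Pi^0_1$ \emph{guiding} subshift $Y$ over a finite alphabet $D$ whose letters are patterns on a large square $[-N,N]^2$ over the alphabet of $X$, decorated with the next move in $[-r,r]^2$ (and, if convenient, a bounded amount of past history); a finite word is forbidden in $Y$ if the windows it lists, read off along the moves they prescribe, are mutually inconsistent or trace a pattern forbidden in $X$. Then $Y$ is $\Pi^0_1$, every configuration of $X$ is obtained by tracing its path against some point of $Y$, and conversely. Configurations whose path is bounded have finite support and present no difficulty, and uniform rowsparsity forces every unbounded path to have unbounded height path (a path with infinite range confined to finitely many rows would make some row's support infinite). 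Since the two-dimensional subshift with constant rows and columns drawn from any $\Pi^0_1$ subshift is sofic \cite{Ho09,DuRoSh10,AuSa13}, the whole problem reduces to \emph{hosting} $Y$ inside a sofic two-dimensional subshift: storing a point of $Y$ somewhere in the configuration, letting the marked path weave through it, checking the weaving by local rules, and recovering $X$ by a block map.

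In the ascending case the hosting is easy: one writes the guiding word on a vertical axis, broadcasts it to constant rows, and lets the path sample it in order as it climbs; the ``successor row'' mechanism links consecutive moves with purely local rules precisely because an ascending path never wanders back into the region it has left. For a general $r$-path I would instead try to re-index the guiding data by \emph{height}: record, for each height $h$, the at most $k$ crossings of row $h$ by the path, together with their windows and their links to crossings at the boundedly many nearby heights in $[h-r,h+r]$. This is again a finite-alphabet sequence that lives naturally on the vertical axis, and the predicate ``this sequence is consistent and traces no forbidden pattern'' is still $\Pi^0_1$, so the constant-rows subshift built from it is still sofic.

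The difficulty --- and the reason the statement is only conjectured --- is the geometric realization of these links. A finite-alphabet guiding sequence is position-blind, and between two linked crossings at adjacent heights the path may travel arbitrarily far horizontally and make arbitrarily wide excursions before returning; hence neither the broadcast guiding sequence nor any local tile rule can certify \emph{which} of the at most $k$ crossings at one height is joined to which crossing at the next. (An ascending path has no such excursions, which is exactly why the successor mechanism suffices there.) Overcoming this seems to require one of two things. One option is to carry an explicit effective ``clock'' on the configuration and enforce soficly that the path follows it, that is, to run a self-simulating hierarchical computation in the style of \cite{DuRoSh10,AuSa13} whose simulated Turing machine reads its input off the marked path rather than off a row of the grid --- the $k$-bounded-to-one property guaranteeing that only boundedly much computation data is needed per cell. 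The other is to use $k$-bounded-to-oneness more structurally, decomposing each path into boundedly many controlled excursions and splicing their soficness together. I expect the first route to be the more promising, but, as the author remarks, it may well be that a genuinely new idea is required.
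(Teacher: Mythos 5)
This statement is stated in the paper as a \emph{conjecture}: the paper gives no proof of it, and immediately before stating it remarks that adapting the proof of Proposition~\ref{prop:PathsSofic} to non-ascending paths ``seems to require additional tricks, if it is possible at all, and perhaps one would need a radically different idea.'' Your proposal is therefore not in competition with a proof in the paper, but it also is not a proof. You correctly isolate where ascension is used in Proposition~\ref{prop:PathsSofic} (the successor-row mechanism works only because an ascending path samples the broadcast guiding data in height order and never returns to a region it has left), and the obstruction you name --- that a position-blind guiding sequence together with local tile rules cannot certify which of the at most $k$ crossings of one row is linked to which crossing of a nearby row, because the path may make arbitrarily long horizontal excursions between linked crossings --- is precisely the difficulty the paper itself points to. But the two routes you offer for overcoming it (a self-simulating hierarchical construction in the style of \cite{DuRoSh10,AuSa13} whose simulated machine reads its input off the marked path, or a decomposition of each path into boundedly many controlled excursions spliced together) are only named, not carried out; neither is accompanied by an argument that the required checking can actually be enforced by SFT rules plus a factor map, and the first route in particular would need a substantial construction with its own correctness proof. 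Since you yourself conclude that a genuinely new idea may be required, the honest assessment is that the statement remains unproved by your proposal, exactly as it remains unproved (and explicitly conjectural) in the paper.

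One smaller caution: uniform rowsparsity does force an unbounded path to visit infinitely many rows, as you say, but it does not make the height path ascending or even eventually monotone, and it does not bound the horizontal extent of an excursion between two visits to nearby rows; so nothing in the hypotheses recovers the locality that the ascending case exploits. Any successful argument will have to deal with exactly this, which is why your reduction to ``hosting a $\Pi^0_1$ guiding object soficly'' is a reasonable framing of the problem rather than a solution to it.
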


We also show that sparseness and computability alone are not enough to guarantee soficness, by a slight adaptation of the well-known mirror subshift.

\begin{proposition}
\label{prop:SparseNonSofic}
There exists a $1$-sparse non-sofic $\Pi^0_1$-subshift.%
\end{proposition}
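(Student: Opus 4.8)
The plan is to imitate the mirror subshift, but to take the reflection across a \emph{horizontal} line and to replace the two‑dimensional binary data of the classical mirror shift by a single ``marked dot'' per row, so that the resulting subshift is $1$-sparse. Concretely, I would work over the alphabet $\{0,1,\star\}$ and let $X$ be the set of $x\in\{0,1,\star\}^{\Z^2}$ such that: (i) every row of $x$ contains at most one nonzero symbol; (ii) the symbol $\star$ occurs in at most one cell of $x$; and (iii) whenever $\star$ occurs at $(c,r)$, the support of $x$ is symmetric about row $r$, i.e.\ $x_{(i,r+k)}\neq 0\iff x_{(i,r-k)}\neq 0$ for all $i\in\Z$ and $k\geq 1$. (A cosmetically cleaner variant forces every row to carry exactly one dot, located in column $0$ or column $1$; then (iii) just says that the binary sequence recording these columns is a palindrome about row $r$, and $X$ maps by a genuine sliding‑block factor map onto the one‑dimensional subshift of marked binary palindromes.) This is a ``slight adaptation of the mirror subshift'': the persistent mirror column is replaced by a single marker $\star$ (so that the $1$-per-row budget is not spent on a wall), and the reflection is folded top--bottom rather than left--right so that it does not force the support onto the axis.

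The easy half is the verification that $X$ is $1$-sparse and $\Pi^0_1$. It is $1$-sparse because, by (i), every row --- hence every configuration of the projective subdynamics --- has at most one nonzero symbol. It is $\Pi^0_1$ because each of (i)--(iii) is cut out by a computable family of \emph{finite} forbidden patterns: for (i) forbid ``two nonzero cells in one row''; for (ii) forbid ``two $\star$'s''; and for (iii), for every $i\in\Z$ and $k\geq1$, forbid the one‑to‑three‑cell patterns ``$\star$ at $(0,0)$, a nonzero symbol at $(i,k)$, and $0$ at $(i,-k)$'' together with its vertical mirror. The point here is that ``cell $(i,-k)$ carries the symbol $0$'' is a legitimate single‑cell pattern, so no genuinely infinite (``some whole row is empty'') condition is ever required, and avoiding all of these patterns is exactly the reflection‑symmetry condition (iii).

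For non‑soficness I would run the standard counting argument for mirror‑type shifts. Suppose $X=\pi(Y)$ with $Y$ a $\Z^2$-SFT; after recoding, assume $Y$ is nearest‑neighbour over an alphabet $B$ and $\pi$ is a $1$-block code. For $w\in\{0,1\}^M$ let $\xi_w\in X$ have $\star$ at the origin and, for $1\leq k\leq M$, a dot in rows $\pm k$ at column $w_k\in\{0,1\}$, and $0$ elsewhere; each $\xi_w\in X$. Choosing $Y$-preimages and pigeonholing on the restriction of their ``origin rows'' to a window $[-L,L]$ --- there are at most $|B|^{2L+1}$ possibilities but $2^M$ choices of $w$, so for $M$ large a collision occurs --- one should, after letting $L\to\infty$ and passing to a limit in $Y\times Y$, obtain $y,y'\in Y$ that agree on the entire row through the origin and whose images are mirror configurations (with $\star$ at the origin) differing at some row; splicing the part of $y$ below the origin row with the part of $y'$ above it then gives a $Y$-configuration whose image has a $\star$ but a non‑symmetric support, contradicting $\pi(Y)=X$. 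The hard part --- and the reason one cannot just copy the mirror‑shift proof verbatim --- is making sure that the \emph{mismatch} between the two pigeonholed configurations survives the limit, i.e.\ occurs within a bounded band around the mirror row. For the classical mirror shift this is automatic: the mirror column is preserved by the vertical translations used to recentre the mismatch. Here recentring would move the mirror row, so one must either (a) choose the encoding $\xi_w$ with enough redundancy that distinct $\xi_w$ are forced to disagree close to the mirror row, or (b) phrase the obstruction through the one‑dimensional factor of marked binary palindromes, whose language contains non‑regular fragments such as $\{0^a\star0^b1:a\leq b\}$, and show that such a factor is incompatible with $X$ being a factor of a $\Z^2$-SFT. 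Getting this localisation step right is the crux of the proof; everything else is routine.
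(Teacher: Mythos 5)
Your construction is different from the paper's, and the easy half of your argument (that your $X$ is $1$-sparse and $\Pi^0_1$) is fine; but the heart of the proposition, non-soficness, is not proved, and the gap you yourself flag as ``the crux'' is not a repairable detail of the strategy you sketch -- it is fatal to it. To splice two covers along the mirror row you need them to agree on an entire bi-infinite horizontal band of the SFT cover, which you propose to reach by pigeonholing on windows $[-L,L]$ and passing to a limit; but for the mismatch in the images to survive that limit it must occur within some fixed distance $d$ of the mirror row, and a family of configurations $\xi_w$ that pairwise differ within $d$ rows of the mirror has size at most $2^{O(d)}$, whereas the pigeonhole needs more than $|B|^{2L+1}$ of them with $L\to\infty$. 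These requirements are incompatible, so your option (a) cannot work as described. Option (b) rests on a misconception: the one-dimensional marked-palindrome subshift is itself $\Pi^0_1$, and by the realization results cited in this very paper (Hochman; Durand--Romashchenko--Shen; Aubrun--Sablik) every $\Pi^0_1$ $\Z$-subshift occurs as projective subdynamics of a two-dimensional sofic shift, so a non-regular one-dimensional ``factor'' in the vertical direction is no obstruction to soficness of $X$. More fundamentally, the classical mirror-shift surgery succeeds because an $n\times n$ box carries $n^2$ bits against a boundary of size $O(n)$; in a $1$-sparse subshift any region containing $M$ rows of dot data has boundary $\Omega(M)$, so that area-versus-perimeter mechanism is unavailable. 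In fact it is not even clear that your subshift is non-sofic: an SFT cover could plausibly attempt to verify the sparse top--bottom symmetry with two-dimensional signals (nested brackets, beams, diagonals meeting on the marked row), and ruling all such covers out is precisely what is missing.

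The paper sidesteps all of this by not using a symmetry/palindrome condition at all. Its construction confines the support to two narrow vertical strips, each carrying a nonzero symbol on exactly every third row, with the mod-$3$ residue and orientation conditions, the strip width tied to the inter-strip distance $m$ (width at most $m/4$), and the requirement that adjacent nonzero rows have the same horizontal offset; the net effect is an essentially rigid family whose only unbounded degree of freedom is the single distance $m$, and non-soficness then follows by a pigeonhole on the SFT cover. If you want to salvage your mirror-type example you would need a genuinely different non-soficness argument (not finite-window pigeonhole plus a limit); the more direct fix is to redesign the construction, as the paper does, so that what must be synchronized across unbounded distance is one unbounded parameter rather than an unbounded sequence of independent bits.
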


\begin{proof}
We construct such a binary subshift. First, forbid every configuration where the set of rows containing a nonzero symbol intersects every residue class modulo 3, and every configuration where some row contains two ones. If two rows whose distance is not divisible by $3$ contain $1$ at coordinates $(n_1,m_1)$ and $(n_2,m_2)$, respectively, and $m_2 - m_1 \equiv 1 \bmod 3$, then require that $n_1 < n_2$ and that if $|n_1 - n_2| = m$ for some $m$ then there exist two infinite vertical (necessarily nonoverlapping) strips of width at most $m/4$ containing the support of the configuration, and that both of these strips contain a nonzero symbol on exactly every third row. Finally, require that for every pair of adjacent nonzero rows, the horizontal distance between the nonzero symbols on them is the same. This is by definition a $1$-sparse $\Pi^0_1$ subshift (as we explained how to recognize its forbidden patterns). It is non-sofic by a standard exchange argument, see e.g. \cite[Theorem~7.5]{GiRe96} for this technique and references.
\end{proof}


With a slightly more elaborate construction, one can find even a $1$-sparse non-sofic $\Pi^0_1$-subshift where every configuration is either finite or an infinite ascending path (though of course they are not all $r$-paths for any fixed $r$): pick a slowly-growing recursive function $g : \N \to \N$ with $g(n) \rightarrow \infty$ and additionally require that if there are two $1$s at distance at most $g(n)$ from each other, then the two-columns property of the subshift holds with respect to some $m \leq n$.

\subsection{Path covers}
\label{sec:PathCoverExample}

We give an example of extraction of highways through path covers.

\begin{example}
\label{ex:Circles}
Fix some digital approximation scheme that produces subsets of $\Z^2$ as approximations to real Euclidean circles with integer centers and integer radii. We choose for each circle the approximation where in the octant between angles $\pi/2$ and $\pi/4$ (in the standard orientation of the complex plane $\C$ or the unit circle), for each horizontal coordinate $m$ we include the point $(m,n)$ where $n$ is chosen so that the distance to the corresponding octant of the real circle is minimal along the vertical line through $(m,n) \in \Z^2$. Since
\[ \sqrt{r^2 - m^2} = n + \frac{1}{2} \implies r^2 - m^2 = n^2 + n + \frac{1}{4}, \]
the real circle cannot go through $(m,n+1/2)$, and thus the approximation is unique. The rest of the points are produced by dihedral symmetry. This is the scheme produced by the Bresenham circle-drawing algorithm and the midpoint circle algorithm \cite{HeBaCa10}.

Say $x \in \{0,1\}^{\Z^2}$ is a \emph{circles configuration} if there exists a finite set of disjoint circles with centers in $\Z^2$ and integral radii, such that $x_{\bol v} = 1$ if and only if the digital approximation for one of the circles contains $\bol v$, and all the digital approximations are $1$-clusters in $x$. Let $X$ be the smallest subshift containing every circles configuration, call this the \emph{circles subshift}. Since discretizations of circles can have arbitrarily large support in configurations of $X$, Lemma~\ref{lem:MPC} shows that $\MPC_1(C)$ is nonempty.

The proof of Lemma~\ref{lem:MPC} in the case of $X$ is carried out in Figure~\ref{fig:MPC}. Say the \emph{parameters} of a circle are the vector $(a,b,c) \in \Z^2 \times \N$, where $(a,b)$ is the center and $c$ the radius. The subfigures show a series of pairs of circles, one getting larger and surrounding the origin. The circle getting larger always contains the coordinate $(3, 3)$ and it's parameters are given in the captions. The smaller circle has parameters $(-7, -6, 3)$ in each configuration. We trace finite circular paths partially around the big circle to obtain configurations of $\PC_1(X)$. These paths get arbitrarily long for the growing circle, thus in the suitable limit (which looks like the last picture) one can trace a bi-infinite path. Shifting it by $(3, 3)$, we obtain a configuration $x \in \MPC_1(X)$.

\begin{figure}
\begin{center}
\begin{subfigure}[b]{0.45\textwidth} 
\begin{tikzpicture}[baseline = 0, scale = 0.2]
\draw[black!5!white] (0,0) grid (25,25);
\draw[fill, black!10!white] (12,0) rectangle (13,25);
\draw[fill, black!10!white] (0,12) rectangle (25,13);
\foreach \x/\y in { 11/16, 12/16, 13/16, 9/15, 10/15, 14/15, 15/15, 9/14, 15/14, 8/13, 16/13, 8/12, 16/12, 8/11, 16/11, 9/10, 15/10, 4/9, 5/9, 6/9, 9/9, 10/9, 14/9, 15/9, 3/8, 7/8, 11/8, 12/8, 13/8, 2/7, 8/7, 2/6, 8/6, 2/5, 8/5, 3/4, 7/4, 4/3, 5/3, 6/3}{\draw[fill, gray] (\x,\y) rectangle (\x+1,\y+1);}
\foreach \x/\y/\xx/\yy in {16.5/11.5/16.5/12.5, 16.5/12.5/16.5/13.5, 16.5/13.5/15.5/14.5, 15.5/14.5/15.5/15.5, 15.5/15.5/14.5/15.5, 14.5/15.5/13.5/16.5, 13.5/16.5/12.5/16.5, 12.5/16.5/11.5/16.5}{\draw[-{Latex[length=1mm,width=1mm]}] (\x, \y) -- (\xx, \yy);}
\end{tikzpicture}
\caption{$(0, 0, 4)$}
\end{subfigure}
\begin{subfigure}[b]{0.45\textwidth} 
\begin{tikzpicture}[baseline = 0, scale = 0.2]
\draw[black!5!white] (0,0) grid (25,25);
\draw[fill, black!10!white] (12,0) rectangle (13,25);
\draw[fill, black!10!white] (0,12) rectangle (25,13);
\foreach \x/\y in { 4/18, 5/18, 6/18, 7/18, 8/18, 9/18, 10/18, 2/17, 3/17, 11/17, 12/17, 1/16, 13/16, 0/15, 14/15, 15/15, 15/14, 16/13, 17/12, 17/11, 18/10, 4/9, 5/9, 6/9, 18/9, 3/8, 7/8, 18/8, 2/7, 8/7, 18/7, 2/6, 8/6, 18/6, 2/5, 8/5, 18/5, 3/4, 7/4, 18/4, 4/3, 5/3, 6/3, 17/3, 17/2, 16/1, 15/0}{\draw[fill, gray] (\x,\y) rectangle (\x+1,\y+1);}
\foreach \x/\y/\xx/\yy in {18.5/8.5/18.5/9.5, 18.5/9.5/18.5/10.5, 18.5/10.5/17.5/11.5, 17.5/11.5/17.5/12.5, 17.5/12.5/16.5/13.5, 16.5/13.5/15.5/14.5, 15.5/14.5/15.5/15.5, 15.5/15.5/14.5/15.5, 14.5/15.5/13.5/16.5, 13.5/16.5/12.5/17.5, 12.5/17.5/11.5/17.5, 11.5/17.5/10.5/18.5, 10.5/18.5/9.5/18.5, 9.5/18.5/8.5/18.5}{\draw[-{Latex[length=1mm,width=1mm]}] (\x, \y) -- (\xx, \yy);}
\end{tikzpicture}
\caption{$(-5, -5, 11)$}
\end{subfigure}
\begin{subfigure}[b]{0.45\textwidth} 
\begin{tikzpicture}[baseline = 0, scale = 0.2]
\draw[black!5!white] (0,0) grid (25,25);
\draw[fill, black!10!white] (12,0) rectangle (13,25);
\draw[fill, black!10!white] (0,12) rectangle (25,13);
\foreach \x/\y in { 4/24, 5/24, 6/23, 7/22, 8/21, 9/20, 10/19, 11/18, 12/17, 13/16, 14/15, 15/15, 15/14, 16/13, 17/12, 18/11, 19/10, 4/9, 5/9, 6/9, 20/9, 3/8, 7/8, 21/8, 2/7, 8/7, 22/7, 2/6, 8/6, 23/6, 2/5, 8/5, 24/5, 3/4, 7/4, 24/4, 4/3, 5/3, 6/3}{\draw[fill, gray] (\x,\y) rectangle (\x+1,\y+1);}
\foreach \x/\y/\xx/\yy in {23.5/6.5/22.5/7.5, 22.5/7.5/21.5/8.5, 21.5/8.5/20.5/9.5, 20.5/9.5/19.5/10.5, 19.5/10.5/18.5/11.5, 18.5/11.5/17.5/12.5, 17.5/12.5/16.5/13.5, 16.5/13.5/15.5/14.5, 15.5/14.5/15.5/15.5, 15.5/15.5/14.5/15.5, 14.5/15.5/13.5/16.5, 13.5/16.5/12.5/17.5, 12.5/17.5/11.5/18.5, 11.5/18.5/10.5/19.5, 10.5/19.5/9.5/20.5, 9.5/20.5/8.5/21.5, 8.5/21.5/7.5/22.5, 7.5/22.5/6.5/23.5}{\draw[-{Latex[length=1mm,width=1mm]}] (\x, \y) -- (\xx, \yy);}
\end{tikzpicture}
\caption{$(-92, -92, 134)$}
\end{subfigure}
\begin{subfigure}[b]{0.45\textwidth} 
\begin{tikzpicture}[baseline = 0, scale = 0.2]
\draw[black!5!white] (0,0) grid (25,25);
\draw[fill, black!10!white] (12,0) rectangle (13,25);
\draw[fill, black!10!white] (0,12) rectangle (25,13);
\foreach \x/\y in { 5/24, 6/23, 7/22, 8/21, 9/20, 10/19, 11/18, 12/17, 13/16, 14/15, 15/15, 15/14, 16/13, 17/12, 18/11, 19/10, 4/9, 5/9, 6/9, 20/9, 3/8, 7/8, 21/8, 2/7, 8/7, 22/7, 2/6, 8/6, 23/6, 2/5, 8/5, 24/5, 3/4, 7/4, 4/3, 5/3, 6/3}{\draw[fill, gray] (\x,\y) rectangle (\x+1,\y+1);}
\foreach \x/\y/\xx/\yy in {25.5/4.5/24.5/5.5, 24.5/5.5/23.5/6.5, 23.5/6.5/22.5/7.5, 22.5/7.5/21.5/8.5, 21.5/8.5/20.5/9.5, 20.5/9.5/19.5/10.5, 19.5/10.5/18.5/11.5, 18.5/11.5/17.5/12.5, 17.5/12.5/16.5/13.5, 16.5/13.5/15.5/14.5, 15.5/14.5/15.5/15.5, 15.5/15.5/14.5/15.5, 14.5/15.5/13.5/16.5, 13.5/16.5/12.5/17.5, 12.5/17.5/11.5/18.5, 11.5/18.5/10.5/19.5, 10.5/19.5/9.5/20.5, 9.5/20.5/8.5/21.5, 8.5/21.5/7.5/22.5, 7.5/22.5/6.5/23.5, 6.5/23.5/5.5/24.5, 5.5/24.5/4.5/25.5}{\draw[-{Latex[length=1mm,width=1mm]}] (\x, \y) -- (\xx, \yy);}
\end{tikzpicture}
\caption{$(a,b,c) \in P$}
\end{subfigure}
\end{center}
\caption{Illustration of how to obtain nonemptiness of $\MPC_1(X)$ for the circles subshift. In the last subfigure $P = \{(a, a, r) \;|\; a = -\lfloor r/\sqrt{2} \rfloor + 2 , \exists n \in \N: r^2 = 2n^2 - n + 1, r \geq 373 \}$ \cite{Ku79}.}
\label{fig:MPC}
\end{figure}
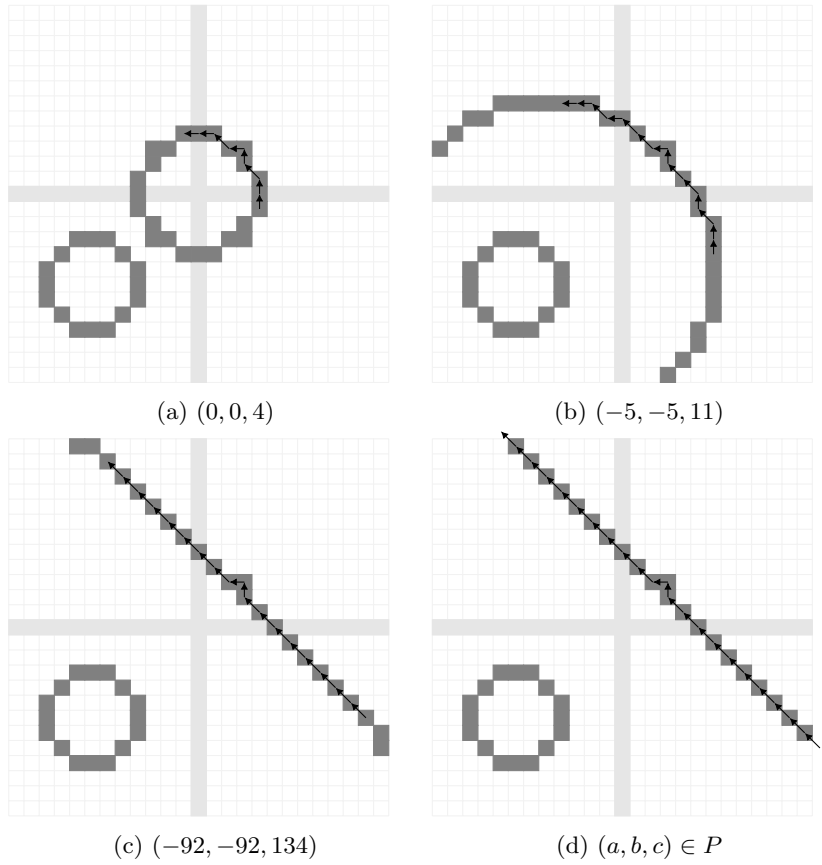

Observe that $x$ is not a highway, as we obtained it as a limit of configurations containing a ``bump'', which appears only once, see \cite{Mo11,Ku79} and OEIS sequence A055979 for details. Note that $x$ is, however, a road by Lemma~\ref{lem:StringLeeway}, as we can trace the path through the finite circle as well. The path proving $7$-roadness is shown in Figure~\ref{fig:MPC2}. (Since $X$ is zero-gluing, we could also simply remove the circle to get a $1$-road.) By passing to a minimal subsystem of $\MPC_7(X)$, we obtain a highway, a diagonal path through the origin. By positioning the circle differently, one can obtain also irrational lines, as constructed in Example~\ref{ex:BoundedDeterministicSubshift}.

\begin{figure}
\begin{center}
\begin{subfigure}[b]{0.45\textwidth} 
\begin{tikzpicture}[baseline = 0, scale = 0.2]
\draw[black!5!white] (0,0) grid (25,25);
\draw[fill, black!10!white] (15,0) rectangle (16,25);
\draw[fill, black!10!white] (0,15) rectangle (25,16);
\foreach \x/\y in { 5/24, 6/23, 7/22, 8/21, 9/20, 10/19, 11/18, 12/17, 13/16, 14/15, 15/15, 15/14, 16/13, 17/12, 18/11, 19/10, 4/9, 5/9, 6/9, 20/9, 3/8, 7/8, 21/8, 2/7, 8/7, 22/7, 2/6, 8/6, 23/6, 2/5, 8/5, 24/5, 3/4, 7/4, 4/3, 5/3, 6/3}{\draw[fill, gray] (\x,\y) rectangle (\x+1,\y+1);}
\foreach \x/\y/\xx/\yy in {25.5/4.5/24.5/5.5, 24.5/5.5/23.5/6.5, 23.5/6.5/22.5/7.5, 22.5/7.5/21.5/8.5, 21.5/8.5/20.5/9.5, 20.5/9.5/19.5/10.5, 19.5/10.5/18.5/11.5, 18.5/11.5/17.5/12.5, 17.5/12.5/16.5/13.5, 16.5/13.5/15.5/14.5, 15.5/14.5/15.5/15.5, 15.5/15.5/14.5/15.5,
14.5/15.5/7.5/8.5,
7.5/8.5/8.5/7.5,
8.5/7.5/8.5/6.5,
8.5/6.5/8.5/5.5,
8.5/5.5/7.5/4.5,
7.5/4.5/6.5/3.5,
6.5/3.5/5.5/3.5,
5.5/3.5/4.5/3.5,
4.5/3.5/3.5/4.5,
3.5/4.5/2.5/5.5,
2.5/5.5/2.5/6.5,
2.5/6.5/2.5/7.5,
2.5/7.5/3.5/8.5,
3.5/8.5/4.5/9.5,
4.5/9.5/5.5/9.5,
5.5/9.5/6.5/9.5,
6.5/9.5/13.5/16.5,
13.5/16.5/12.5/17.5, 12.5/17.5/11.5/18.5, 11.5/18.5/10.5/19.5, 10.5/19.5/9.5/20.5, 9.5/20.5/8.5/21.5, 8.5/21.5/7.5/22.5, 7.5/22.5/6.5/23.5, 6.5/23.5/5.5/24.5, 5.5/24.5/4.5/25.5}{\draw[-{Latex[length=1mm,width=1mm]}] (\x, \y) -- (\xx, \yy);}
\end{tikzpicture}
\end{subfigure}
\begin{subfigure}[b]{0.45\textwidth} 
\begin{tikzpicture}[baseline = 0, scale = 0.2]
\draw[black!5!white] (-1,0) grid (24,25);
\draw[fill, black!10!white] (14,0) rectangle (15,25);
\draw[fill, black!10!white] (-1,15) rectangle (24,16);
\foreach \x/\y in { 5/24, 6/23, 7/22, 8/21, 9/20, 10/19, 11/18, 12/17, 13/16, 14/15, 15/14, 16/13, 17/12, 18/11, 19/10, 20/9,21/8, 22/7, 23/6 }{\draw[fill, gray] (\x,\y) rectangle (\x+1,\y+1);}
\foreach \x/\y/\xx/\yy in { 24.5/5.5/23.5/6.5, 23.5/6.5/22.5/7.5, 22.5/7.5/21.5/8.5, 21.5/8.5/20.5/9.5, 20.5/9.5/19.5/10.5, 19.5/10.5/18.5/11.5, 18.5/11.5/17.5/12.5, 17.5/12.5/16.5/13.5, 16.5/13.5/15.5/14.5, 15.5/14.5/14.5/15.5, 14.5/15.5/13.5/16.5,
13.5/16.5/12.5/17.5, 12.5/17.5/11.5/18.5, 11.5/18.5/10.5/19.5, 10.5/19.5/9.5/20.5, 9.5/20.5/8.5/21.5, 8.5/21.5/7.5/22.5, 7.5/22.5/6.5/23.5, 6.5/23.5/5.5/24.5, 5.5/24.5/4.5/25.5}{\draw[-{Latex[length=1mm,width=1mm]}] (\x, \y) -- (\xx, \yy);}
\end{tikzpicture}
\end{subfigure}
\end{center}
\caption{The path illustrating $7$-roadness of $x$, and a point of $\MPC_7(X)$ (even $\MPC_1(X)$) in its orbit closure under $\tau$.}
\label{fig:MPC2}
\end{figure}
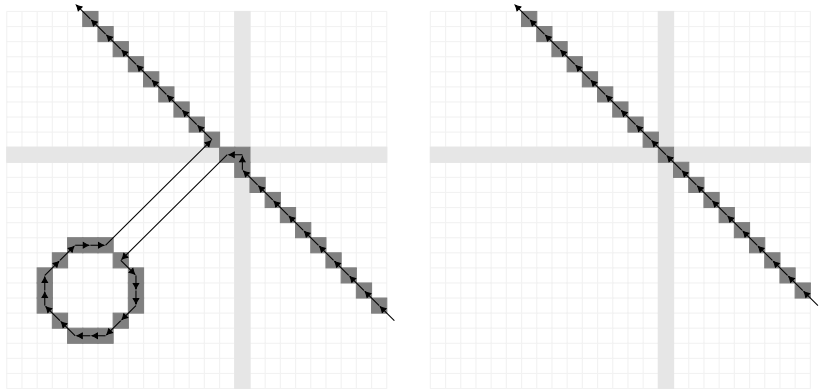 \qee
\end{example}

\subsection{Paths}

In Theorem~\ref{thm:PathCharacterization}, we showed that there are four kinds of minimal path spaces. It is easy to find examples of the first three cases. Path spaces $X$ where all paths are bounded can be found by taking a minimal subshift $Y$ with alphabet contained in $\Z$, and taking the paths of $X$ to be the discrete derivatives (differences between consecutive cells) of points of $Y$.\footnote{Such paths are known as \emph{coboundaries}.} Ascending path spaces can be constructed by summing points of minimal subshifts with alphabet contained in $\Z$ where the sum of every long enough word is positive, and descending path spaces can be found by inverting ascending paths.

The fourth case, where every path is unbounded-to-one and some path is infinite-to-one was not really needed in our characterizations, as we have concentrated on trace sparse spaces. However, this case is quite interesting, and we show by examples that it splits into three subtypes.\footnote{Similar examples can be found in the ascending and descending case when the paths are graphed as two-dimensional configurations and we consider non-horizontal rows.} In the rest of this section, we give representatives of each of these subtypes. All of the examples are substitutive, and thus also give examples of trace sparse sofic shifts when the paths are drawn on $\Z^2$.

The three subcases of minimal path spaces we exhibit are ones where some path visits some cell infinitely many times, and
\begin{itemize}
\item every path visits every cell (that it visits at all) infinitely many times (Example~\ref{ex:Subcase1}, Figure~\ref{fig:Subcase1}),
\item some path visits all cells finitely many times (Example~\ref{ex:Subcase2}, Figure~\ref{fig:Subcase2}), or
\item some path visits some cell finitely many times, but every path visits some cell infinitely many times (Example~\ref{ex:Subcase3}, Figure~\ref{fig:Subcase3}).
\end{itemize}

Write ${\nearrow} = 1$ and ${\searrow} = -1$.

\begin{example}
\label{ex:Subcase1}
There is a minimal subsystem of $\Path$ where every path visits every cell either zero or infinitely many times, and always visits infinitely many cells. Consider the substitution $\tau_1$ defined by
\[ \nearrow \; \mapsto \; \nearrow \nearrow \searrow \searrow \nearrow \nearrow; \;\;\;\; \searrow \; \mapsto \; \searrow \searrow \nearrow \nearrow \searrow \searrow. \]
Let $Y$ be the orbit closure of a two-sided fixed-point of this substitution, considered as a minimal subshift of $\DPath$. Then $X = \phi^{-1}(Y) \subset \Path$ has the property that every path visits every cell (that it visits at all) infinitely many times.

Namely, to each path $w \in \{\nearrow, \searrow\}^\ell$, associate a point $z_w \in \N^\Z$ where $(z_w)_h$ records the number of times the path $w$ visits $h \in \Z$, relative to the starting point of $w$. More precisely, $(z_w)_h = k$ if there are exactly $k$ distinct $j \in [0, \ell]$ such that $\sum_{i = 0}^j w_i = h$. By induction, we see that for $w = \tau_1^n(\nearrow)$ we have $(z_w)_i \geq 2^n$ or $(z_w)_i = 0$ for all $i \in \Z$, and similarly the support of $z_w$ doubles in size after each substitution.

From this, it easily follows that in every path in $X$, every cell (that is visited at all) is visited infinitely many times, and that no path in $X$ is bounded. \qee
\end{example}

\begin{figure}
\begin{center}
\begin{tikzpicture}[scale=0.05]
\draw (0,0) -- ++(1,1) -- ++(1,1) -- ++(1,-1) -- ++(1,-1) -- ++(1,1) -- ++(1,1) -- ++(1,1) -- ++(1,1) -- ++(1,-1) -- ++(1,-1) -- ++(1,1) -- ++(1,1) -- ++(1,-1) -- ++(1,-1) -- ++(1,1) -- ++(1,1) -- ++(1,-1) -- ++(1,-1) -- ++(1,-1) -- ++(1,-1) -- ++(1,1) -- ++(1,1) -- ++(1,-1) -- ++(1,-1) -- ++(1,1) -- ++(1,1) -- ++(1,-1) -- ++(1,-1) -- ++(1,1) -- ++(1,1) -- ++(1,1) -- ++(1,1) -- ++(1,-1) -- ++(1,-1) -- ++(1,1) -- ++(1,1) -- ++(1,1) -- ++(1,1) -- ++(1,-1) -- ++(1,-1) -- ++(1,1) -- ++(1,1) -- ++(1,1) -- ++(1,1) -- ++(1,-1) -- ++(1,-1) -- ++(1,1) -- ++(1,1) -- ++(1,-1) -- ++(1,-1) -- ++(1,1) -- ++(1,1) -- ++(1,-1) -- ++(1,-1) -- ++(1,-1) -- ++(1,-1) -- ++(1,1) -- ++(1,1) -- ++(1,-1) -- ++(1,-1) -- ++(1,1) -- ++(1,1) -- ++(1,-1) -- ++(1,-1) -- ++(1,1) -- ++(1,1) -- ++(1,1) -- ++(1,1) -- ++(1,-1) -- ++(1,-1) -- ++(1,1) -- ++(1,1) -- ++(1,-1) -- ++(1,-1) -- ++(1,1) -- ++(1,1) -- ++(1,-1) -- ++(1,-1) -- ++(1,-1) -- ++(1,-1) -- ++(1,1) -- ++(1,1) -- ++(1,-1) -- ++(1,-1) -- ++(1,1) -- ++(1,1) -- ++(1,-1) -- ++(1,-1) -- ++(1,1) -- ++(1,1) -- ++(1,1) -- ++(1,1) -- ++(1,-1) -- ++(1,-1) -- ++(1,1) -- ++(1,1) -- ++(1,-1) -- ++(1,-1) -- ++(1,1) -- ++(1,1) -- ++(1,-1) -- ++(1,-1) -- ++(1,-1) -- ++(1,-1) -- ++(1,1) -- ++(1,1) -- ++(1,-1) -- ++(1,-1) -- ++(1,-1) -- ++(1,-1) -- ++(1,1) -- ++(1,1) -- ++(1,-1) -- ++(1,-1) -- ++(1,-1) -- ++(1,-1) -- ++(1,1) -- ++(1,1) -- ++(1,-1) -- ++(1,-1) -- ++(1,1) -- ++(1,1) -- ++(1,-1) -- ++(1,-1) -- ++(1,1) -- ++(1,1) -- ++(1,1) -- ++(1,1) -- ++(1,-1) -- ++(1,-1) -- ++(1,1) -- ++(1,1) -- ++(1,-1) -- ++(1,-1) -- ++(1,1) -- ++(1,1) -- ++(1,-1) -- ++(1,-1) -- ++(1,-1) -- ++(1,-1) -- ++(1,1) -- ++(1,1) -- ++(1,-1) -- ++(1,-1) -- ++(1,1) -- ++(1,1) -- ++(1,-1) -- ++(1,-1) -- ++(1,1) -- ++(1,1) -- ++(1,1) -- ++(1,1) -- ++(1,-1) -- ++(1,-1) -- ++(1,1) -- ++(1,1) -- ++(1,-1) -- ++(1,-1) -- ++(1,1) -- ++(1,1) -- ++(1,-1) -- ++(1,-1) -- ++(1,-1) -- ++(1,-1) -- ++(1,1) -- ++(1,1) -- ++(1,-1) -- ++(1,-1) -- ++(1,1) -- ++(1,1) -- ++(1,-1) -- ++(1,-1) -- ++(1,1) -- ++(1,1) -- ++(1,1) -- ++(1,1) -- ++(1,-1) -- ++(1,-1) -- ++(1,1) -- ++(1,1) -- ++(1,1) -- ++(1,1) -- ++(1,-1) -- ++(1,-1) -- ++(1,1) -- ++(1,1) -- ++(1,1) -- ++(1,1) -- ++(1,-1) -- ++(1,-1) -- ++(1,1) -- ++(1,1) -- ++(1,-1) -- ++(1,-1) -- ++(1,1) -- ++(1,1) -- ++(1,-1) -- ++(1,-1) -- ++(1,-1) -- ++(1,-1) -- ++(1,1) -- ++(1,1) -- ++(1,-1) -- ++(1,-1) -- ++(1,1) -- ++(1,1) -- ++(1,-1) -- ++(1,-1) -- ++(1,1) -- ++(1,1) -- ++(1,1) -- ++(1,1) -- ++(1,-1) -- ++(1,-1) -- ++(1,1) -- ++(1,1);
\end{tikzpicture}
\end{center}
\caption{Part of a typical path in Example~\ref{ex:Subcase1}.}
\label{fig:Subcase1}
\end{figure}
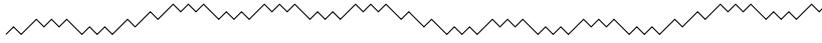

\begin{example}
\label{ex:Subcase2}
Consider the substitution $\tau_2$ defined by
\[ \nearrow \; \mapsto \; \nearrow \nearrow \searrow \nearrow \nearrow; \;\;\;\; \searrow \; \mapsto \; \searrow \searrow \nearrow \searrow \searrow. \]
Define $X$ as in the previous example. From the fact that the total offset $f(n) = 3^n$ of the path $\tau_2^n(\nearrow)$ is unbounded but $f(n)/5^n \rightarrow 0$ and the primitiveness of the subsitution, it is clear that paths in $X$ are not uniformly ascending or bounded. Nevertheless, there is a path in $X$ where every cell is visited a finite number of times. Namely, the limit of $\tau_2^m(\nearrow) . \tau_2^m(\nearrow)$ as $m \rightarrow \infty$ is such a path, where the decimal point denotes the center of the path. \qee
\end{example}

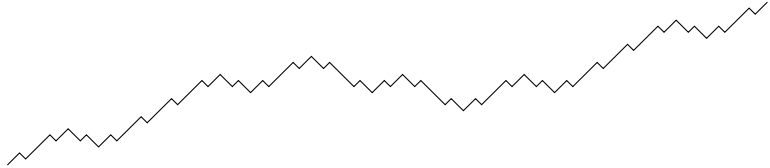
\begin{figure}
\begin{center}
\begin{tikzpicture}[scale=0.08]
\draw (0,0) -- ++(1,1) -- ++(1,1) -- ++(1,-1) -- ++(1,1) -- ++(1,1) -- ++(1,1) -- ++(1,1) -- ++(1,-1) -- ++(1,1) -- ++(1,1) -- ++(1,-1) -- ++(1,-1) -- ++(1,1) -- ++(1,-1) -- ++(1,-1) -- ++(1,1) -- ++(1,1) -- ++(1,-1) -- ++(1,1) -- ++(1,1) -- ++(1,1) -- ++(1,1) -- ++(1,-1) -- ++(1,1) -- ++(1,1) -- ++(1,1) -- ++(1,1) -- ++(1,-1) -- ++(1,1) -- ++(1,1) -- ++(1,1) -- ++(1,1) -- ++(1,-1) -- ++(1,1) -- ++(1,1) -- ++(1,-1) -- ++(1,-1) -- ++(1,1) -- ++(1,-1) -- ++(1,-1) -- ++(1,1) -- ++(1,1) -- ++(1,-1) -- ++(1,1) -- ++(1,1) -- ++(1,1) -- ++(1,1) -- ++(1,-1) -- ++(1,1) -- ++(1,1) -- ++(1,-1) -- ++(1,-1) -- ++(1,1) -- ++(1,-1) -- ++(1,-1) -- ++(1,-1) -- ++(1,-1) -- ++(1,1) -- ++(1,-1) -- ++(1,-1) -- ++(1,1) -- ++(1,1) -- ++(1,-1) -- ++(1,1) -- ++(1,1) -- ++(1,-1) -- ++(1,-1) -- ++(1,1) -- ++(1,-1) -- ++(1,-1) -- ++(1,-1) -- ++(1,-1) -- ++(1,1) -- ++(1,-1) -- ++(1,-1) -- ++(1,1) -- ++(1,1) -- ++(1,-1) -- ++(1,1) -- ++(1,1) -- ++(1,1) -- ++(1,1) -- ++(1,-1) -- ++(1,1) -- ++(1,1) -- ++(1,-1) -- ++(1,-1) -- ++(1,1) -- ++(1,-1) -- ++(1,-1) -- ++(1,1) -- ++(1,1) -- ++(1,-1) -- ++(1,1) -- ++(1,1) -- ++(1,1) -- ++(1,1) -- ++(1,-1) -- ++(1,1) -- ++(1,1) -- ++(1,1) -- ++(1,1) -- ++(1,-1) -- ++(1,1) -- ++(1,1) -- ++(1,1) -- ++(1,1) -- ++(1,-1) -- ++(1,1) -- ++(1,1) -- ++(1,-1) -- ++(1,-1) -- ++(1,1) -- ++(1,-1) -- ++(1,-1) -- ++(1,1) -- ++(1,1) -- ++(1,-1) -- ++(1,1) -- ++(1,1) -- ++(1,1) -- ++(1,1) -- ++(1,-1) -- ++(1,1) -- ++(1,1);
\end{tikzpicture}
\end{center}
\caption{Part of a typical path in Example~\ref{ex:Subcase2}.}
\label{fig:Subcase2}
\end{figure}

\begin{example}
\label{ex:Subcase3}
Consider the substitution $\tau_3$ defined by
\[ \nearrow \; \mapsto \; \nearrow \nearrow \searrow; \;\;\;\; \searrow \; \mapsto \; \nearrow \searrow \searrow. \]

Define $X$ as in the previous examples. As in the above example, it is clear that paths in $X$ are not uniformly ascending or bounded. As in Example~\ref{ex:Subcase1}, consider the configurations $z \in \N^\Z$ corresponding to $\tau^n(\nearrow)$. Then by induction one can show that the support of $z$ is the interval $[0,n+1]$, $z_0 = 1$ and $z_i \geq n$ for $i \in [1,n+1]$. (Seeing $z$ as an element of $\Z^\N$, the recurrence is $z \mapsto z + 2 \cdot 0z0^\omega - 0110^\omega$, with cellwise addition.)

It easily follows that all paths in $X$ have at most one cell that is not visited infinitely many times, as if $m$ is visited at most $k$ times, then for $n > k$, the property of $z \in \N^\Z$ corresponding to $\tau^n(\nearrow)$ or $\tau^n(\searrow)$ established in the previous paragraph shows that $m$ must be the beginning of a $\tau^n(\nearrow)$-path and the end of a $\tau^n(\searrow)$-path (as every path can be written as compositions of such subpaths). Conversely, indeed a path where $0$ is visited once, negative integers are never visited, and positive integers are visited infinitely many times, is obtained as the limit of $\tau^n(\searrow) . \tau^n(\nearrow)$. \qee
\end{example}

\begin{figure}
\begin{center}
\begin{tikzpicture}[scale=0.07]
\draw (0,0) -- ++(1,1) -- ++(1,1) -- ++(1,-1) -- ++(1,1) -- ++(1,1) -- ++(1,-1) -- ++(1,1) -- ++(1,-1) -- ++(1,-1) -- ++(1,1) -- ++(1,1) -- ++(1,-1) -- ++(1,1) -- ++(1,1) -- ++(1,-1) -- ++(1,1) -- ++(1,-1) -- ++(1,-1) -- ++(1,1) -- ++(1,1) -- ++(1,-1) -- ++(1,1) -- ++(1,-1) -- ++(1,-1) -- ++(1,1) -- ++(1,-1) -- ++(1,-1) -- ++(1,1) -- ++(1,1) -- ++(1,-1) -- ++(1,1) -- ++(1,1) -- ++(1,-1) -- ++(1,1) -- ++(1,-1) -- ++(1,-1) -- ++(1,1) -- ++(1,1) -- ++(1,-1) -- ++(1,1) -- ++(1,-1) -- ++(1,-1) -- ++(1,1) -- ++(1,-1) -- ++(1,-1) -- ++(1,1) -- ++(1,1) -- ++(1,-1) -- ++(1,1) -- ++(1,-1) -- ++(1,-1) -- ++(1,1) -- ++(1,-1) -- ++(1,-1) -- ++(1,1) -- ++(1,1) -- ++(1,-1) -- ++(1,1) -- ++(1,1) -- ++(1,-1) -- ++(1,1) -- ++(1,-1) -- ++(1,-1) -- ++(1,1) -- ++(1,1) -- ++(1,-1) -- ++(1,1) -- ++(1,-1) -- ++(1,-1) -- ++(1,1) -- ++(1,-1) -- ++(1,-1) -- ++(1,1) -- ++(1,1) -- ++(1,-1) -- ++(1,1) -- ++(1,-1) -- ++(1,-1) -- ++(1,1) -- ++(1,-1) -- ++(1,-1) -- ++(1,1) -- ++(1,1) -- ++(1,-1) -- ++(1,1) -- ++(1,1) -- ++(1,-1) -- ++(1,1) -- ++(1,-1) -- ++(1,-1) -- ++(1,1) -- ++(1,1) -- ++(1,-1) -- ++(1,1) -- ++(1,1) -- ++(1,-1) -- ++(1,1) -- ++(1,-1) -- ++(1,-1) -- ++(1,1) -- ++(1,1) -- ++(1,-1) -- ++(1,1) -- ++(1,-1) -- ++(1,-1) -- ++(1,1) -- ++(1,-1) -- ++(1,-1) -- ++(1,1) -- ++(1,1) -- ++(1,-1) -- ++(1,1) -- ++(1,1) -- ++(1,-1) -- ++(1,1) -- ++(1,-1) -- ++(1,-1) -- ++(1,1) -- ++(1,1) -- ++(1,-1) -- ++(1,1) -- ++(1,1) -- ++(1,-1) -- ++(1,1) -- ++(1,-1) -- ++(1,-1) -- ++(1,1) -- ++(1,1) -- ++(1,-1) -- ++(1,1) -- ++(1,-1) -- ++(1,-1) -- ++(1,1) -- ++(1,-1) -- ++(1,-1) -- ++(1,1) -- ++(1,1) -- ++(1,-1) -- ++(1,1) -- ++(1,1) -- ++(1,-1) -- ++(1,1) -- ++(1,-1) -- ++(1,-1) -- ++(1,1) -- ++(1,1) -- ++(1,-1) -- ++(1,1) -- ++(1,-1) -- ++(1,-1) -- ++(1,1) -- ++(1,-1) -- ++(1,-1) -- ++(1,1) -- ++(1,1) -- ++(1,-1) -- ++(1,1) -- ++(1,-1) -- ++(1,-1) -- ++(1,1) -- ++(1,-1) -- ++(1,-1);
\draw[color=black!30!white] (0,-1) -- (162,-1);
\end{tikzpicture}
\end{center}
\caption{Part of the path in Example~\ref{ex:Subcase3} where one height is visited only once, and all other heights are visited infinitely many times. The gray line 
marks the height visited only once.}
\label{fig:Subcase3}
\end{figure}
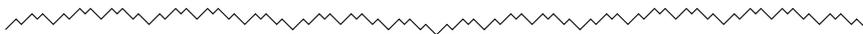

\section{Acknowledgements}

The author would like to thank Pierre Guillon, Mariusz Lema\'nczyk, Kaisa Matom\"aki, Ronnie Pavlov, Siamak Taati, Ilkka T\"orm\"a and Charalampos Zinoviadis for many interesting discussions. We thank an anonymous referee for suggesting many improvements.

\bibliographystyle{plain}
\bibliography{../../../bib/bib}{}

\end{document}